\NewDocumentCommand{\evalat}{sO{\big}mm}{%
  \IfBooleanTF{#1}
   {\mleft. #3 \mright|_{#4}}
   {#3#2|_{#4}}%
}
\newtheorem{theorem}{Theorem}
\newtheorem{corollary}[theorem]{Corollary}
\newtheorem{definition}[theorem]{Definition}
\newtheorem{lemma}[theorem]{Lemma}
\newenvironment{proof}[1][Proof]{\textbf{#1.} }{\ \rule{0.5em}{0.5em}}
\newcommand{\kom}[1]{}
\renewcommand{\kom}[1]{{\bf [#1]}}
\definecolor{blau}{rgb}{0.1,0.0,0.9}
\newcounter{komcounter}
\numberwithin{komcounter}{section}
\newcommand{\grad}{\nabla}
\title{Estimates for viscosity solutions of fully nonlinear equations near smooth boundaries}
\author{Niklas L.P. Lundstr\"om{\small{$^1$}}, Marcus Olofsson{\small{$^2$}}, Jesper Singh{\small{$^1$}}}
\date{\today}
\begin{document}

\maketitle

\begin{center}
{\small{$^1$}}
{\it \small Department of Mathematics and Mathematical Statistics},\\
{\it \small Ume{\aa} University, SE-90187 Ume{\aa}, Sweden\/{\rm ;}}\\
{\it \small niklas.lundstrom@umu.se; jesper.singh@umu.se}\\
{\small{$^2$}}
{\it \small Odensalaskolan, Östersund, Sweden; bakkenolofsson@gmail.com}\\
\end{center}

\begin{abstract}
We reduce the problem of proving decay estimates for viscosity solutions of fully nonlinear PDEs to proving analogous estimates for solutions of one-dimensional ordinary differential inequalities. Our machinery allow the ellipticity to vanish near the boundary and permits general, possibly unbounded, lower-order terms. A key consequence is the derivation of boundary Harnack inequalities for a broad class of fully nonlinear, nonhomogeneous equations near $C^{1,1}$-boundaries.

In combination with  $C^{1,\alpha}$-estimates,
we also obtain that quotients of positive vanishing solutions are Hölder continuous near $C^{1,1}$-boundaries.This result applies to a wide family of fully nonlinear uniformly elliptic PDEs; and for $p(x)$-harmonic functions and planar $\infty$-harmonic functions near locally flat boundaries.
We end by deriving some Phragm\'en-Lindelöf-type corollaries in unbounded domains.\\

\noindent
{\em Mathematics Subject Classification:}
35J60, 35J70, 35B40, 35B53, 35D40, 35J25, 35J92, 42B37.\\

\noindent
{\it Keywords:} boundary Harnack inequalities;  quasi linear; non homogeneous; sub linear; growth estimate.
\end{abstract}

%%%%%%%%%%%%%%%%%%%%%%%%%%%%%%%%%%%%%%%%%%%%
%%%%%%%%%%%%%%%%%%%%%%%%%%%%%%%%%%%%%%%%%%%%
%%%%%%%%%%%%%%%%%%%%%%%%%%%%%%%%%%%%%%%%%%%%
%%%%%%%%%%%%%%%%%%%%%%%%%%%%%%%%%%%%%%%%%%%%
%%%%%%%%%%%%%%%%%%%%%%%%%%%%%%%%%%%%%%%%%%%%
%%%%%%%%%%%%%%%%%%%%%%%%%%%%%%%%%%%%%%%%%%%%

\section{Introduction}

\setcounter{theorem}{0}
\setcounter{equation}{0}

%%%%%%%%%%%%%%%%%%%%%%%%%%%%%
% PART I - DECAUY ESTIMATES %
%%%%%%%%%%%%%%%%%%%%%%%%%%%%%

\noindent
In the first part of the paper, we develop a method that reduces the problem of obtaining decay estimates near the boundary for viscosity sub- and supersolutions of fully nonlinear PDEs to proving analogous estimates for classical sub- and supersolutions of one-dimensional problems in the form of ordinary differential inequalities (ODIs). These ODIs are only slight modifications of the original PDEs in one dimension, thereby illustrating a simple connection between the
$n$-dimensional and the one-dimensional setting for this class of problems.
We prove these results in Section \ref{sec:decay}, in which Theorem \ref{th:lower} holds our lower decay estimates of viscosity supersolutions $v$ of the PDE \eqref{eq:main-nonlinear},
being nonnegative on a portion of the boundary of a domain satisfying an interior sphere condition.
The estimate takes the form $v(x) \geq \check h(d(x,\partial \Omega))$, in which $\check h$ is a classical solution of the corresponding one-dimensional ODI \eqref{eq:ODE}. %satisfying certain boundary conditions.
Analogously, Theorem \ref{th:upper} gives an upper estimate $u(x) \leq \hat h(d(x,\partial \Omega))$ for viscosity subsolutions $u$ of \eqref{eq:main-nonlinear},
being nonpositive on a portion of the boundary of a domain satisfying an exterior sphere condition,
where $\hat h$ solves the ODI \eqref{eq:ODE2}. %satisfying certain boundary conditions.
In the next step we conclude the following similar characterization of the boundary Harnack inequality near $C^{1,1}$ boundaries:
Suppose that $u_1$ and $u_2$ are viscosity solutions of \eqref{eq:main-nonlinear},
vanishing continuously on a portion of the boundary.
Then, if the boundary Harnack inequality holds for the solutions of the corresponding ODIs, then it holds also for $u_1$ and $u_2$.
It thus suffices to prove the inequality for some suitable solutions of the one-dimensional ODIs, see Corollary \ref{cor:BHI}.
To establish these results we only assume our main operator in \eqref{eq:main-nonlinear} to be proper, satisfy \eqref{eq:ass_drift_sub} and/or \eqref{eq:ass_drift_super}.

We believe that our work extends the existing literature in several notable ways:
(i) We allow the ellipticity to vanish near the boundary -- an explicit example is provided in Section \ref{sec:example_0_vanish}.
(ii) We consider general unbounded lower-order terms that may depend on the spatial variable, the solution itself, and the norm of its gradient.
Overall, we impose minimal restrictions on the PDE. Instead, our methodology centers on what can be proved for the associated ODIs:
(iii) Rather than verifying whether a given PDE satisfies a set of technical assumptions,
our approach allows one to reduce the problem to analyzing one-dimensional differential equations.
Often, this reduces to classical textbook examples; for instance, see Section \ref{sec:example_0},
which includes generalizations of the well-known $p$-Laplace equation.
The present approach allows us to explicitly track how estimates depend on the value of the solution at a given distance from the boundary.
This is demonstrated by examples in Sections \ref{sec:example_-1,1} and \ref{sec:example_pk}, where we identify settings in which decay estimates and a boundary Harnack inequalities can be derived for non-homogeneous PDEs that clearly fall outside the scope of assumptions made in, for example, \cite{LOT20}.

In the setting of harmonic functions the Boundary Harnack inequality was considered already in the 70's. %, e.g.,  \cite{K72, An78, D77, W78}.
Generalizations to elliptic equations, nontangentially accessible domains, H\"older domains and uniform domains appeared in the 80's. %can be found in \cite{CFMS81, JK82, BBB91, BB91, Ai01,SS20}.
Concerning nonlinear PDEs, similar results were obtained for $p$-harmonic functions in smooth domains \cite{aikawa}, and in Lipschitz, and Reifenberg flat domains \cite{LN07, LN10, LN12a, LN12b}.
In the latter, the ratio $u/v$ was also proved to be H\"older continuous,
and applications of such regularity estimates was applied to free boundary problems and
studies of the Martin boundary in \cite{LN10, LN12a, LN12b}. %"Regularity and Expansion for Steady Prandtl Equations" uses u/v in fluid dynamics, but there, u and v solves different equations, both being differnet to what we consider.
For recent results on boundary Harnack inequalities in Lipschitz domains and beyond, as well as applications to free boundary problems, see e.g. \cite{TTV10, KS11, AS19,DS20,R-OT-L21,AKS22}, and for nonhomogeneous equations such as the variable $p$-Laplace equation, we refer to \cite{AL16, AJ17, LOT20}. For nonlocal elliptic equations in non-divergence form, see e.g. \cite{R-OS19} and references therein.
Finally, we note that our approach is inspired by our earlier work \cite{L22} on Phragm\'en-Lindelöf-type estimates, and that, in contrast to the aforementioned studies, our contribution mainly relies on less restrictive assumptions on the equation.

%%%%%%%%%%%%%%%%%%%%%%%%%%%%%%
% PART 2 - REGULARITY ON u/v %
%%%%%%%%%%%%%%%%%%%%%%%%%%%%%%

In the second part of the paper (Section \ref{sec:holder})
%However, in the setting of fully nonlinear equations the authors are not aware of any results on the regularity of the ratio $u/v$.
we first show that the ratio $u_1/u_2$ is Hölder continuous, where $u_1$ and $u_2$ are positive solutions of a fully nonlinear PDE,
vanishing on a portion of the boundary of a $C^{1,1}$-domain, see Theorem \ref{th:u/v Hölder fully nonlinear}.
In particular,
by combining our results from the first part of the paper,
giving bounds on the ratios $u_i/d(x,\partial \Omega)$, $d(x,\partial \Omega)/u_i$, $i = 1,2$, and $u_1/u_2$ near the boundary, with known $C^{1,\alpha}$-estimates of $u_i$,
we prove that these ratios are also Hölder continuous of order $\alpha$.
To apply the most general available global $C^{1,\alpha}$-estimates in our setting -- which, to the best of our knowledge, is given in  \cite{N19} -- we had to impose uniformly ellipticity and at most quadratic growth in the gradient, see assumption \eqref{eq:ass-nonlinear1}.

Secondly, we consider equations for which global $C^{1,\alpha}$-estimates are unknown, and use reflection arguments together with local $C^{1,\alpha}$-estimates to establish the Hölder continuity of the ratios.
In particular, we prove results for positive $p(x)$-harmonic functions (Theorem \ref{th:p(x)-harmonic}), %in $\mathbb{R}_+^n$,
and for planar $\infty$-harmonic functions (Theorem \ref{th:holder-boundary-inf-plane}), %in $\mathbb{R}_+^2$.
vanishing on a hyperplane.
A consequence of the latter is a gradient estimate on the form $|D u| \approx u(x) / d(x,\partial  \Omega)$, see Corollary \ref{cor:grad-inf-est}.

We end the paper with some further corollaries of our results, as well as corollaries of already known versions of these estimates for $p$-harmonic functions, $p \in (1,\infty)$ in Section \ref{sec:applications-of-u/v-reg}. In particular, we derive some Phragm\'en-Lindelöf theorems as well as uniqueness (modulo normalization) of infinity- and $p$-harmonic functions in some unbounded domains.

\bigskip

\noindent
{\bf Notation.}
%For convenience we will adopt the following notation.
Points in Euclidean $n$-space $\mathbb{R}^n$ are denoted by $x=(x_1,...,x_n)$ or $x=(x',x_n)$ , where $x'=(x_1,...,x_{n-1})\in\mathbb{R}^{n-1}$,
and $\mathbb R^n_+:=\{x \in \mathbb R^n: x_n>0\}$.
We let $\overline{E}$ be the closure and $\partial E$ the boundary of the set $E \subset \mathbb{R}^n$,
and $d(y, E)$ equals the distance from $y \in \mathbb{R}^n$ to $E$.
The space of continuous functions on $E$ is denoted by $C(E)$;
$C^{\alpha}(E)$, for $0<\alpha\leq 1$, consists of functions in $C(E)$ that are Hölder continuous of order $\alpha$,
$C^1(E)$ is the space of functions whose first partial derivatives exist and are continuous on $E$,
and $C^{1,\alpha}$ consists of functions in $C^1(E)$  whose first-order derivatives are also Hölder continuous of order $\alpha$.
$LSC(E)$ denotes lower, while $USC(E)$ denotes upper semicontinuous functions on $E$.
$\langle .,.\rangle$ denotes the standard Euclidean inner product on $\mathbb{R}^n$ and $|x|=\sqrt{\langle x, x \rangle}$ is the Euclidean norm of $x$. The open ball of $x$ with radius $r>0$ is denoted by $B(x,r)=\{y\in\mathbb{R}^n:|x-y|<r\}$,
and $dx$ is the Lebesgue $n$-measure on $\mathbb{R}^n$.
As a convention, in this paper, unless otherwise
stated, $c$ will denote a constant $\geq 1$, not necessarily the same at each occurrence.
We write $A \lesssim B$ ($A \gtrsim B$) if there exists $c$ such that $A \leq c B$ ($c A \geq B$),
and $A \approx B$ means there exists $c$ so that $c^{-1} A \leq B \leq c A$.
By $Du$ we denote a `weak' gradient, $D^2u$ a `weak' hessian,
and by $\mathbb{S}^n$ we denote the set of symmetric $n \times n$ matrices equipped with the positive semi-definite ordering;
for $X, Y \in \mathbb{S}^n$, we write $X \leq Y$ if $\langle (X - Y) \xi, \xi \rangle \leq 0$ for all $\xi \in \mathbb{R}^n$.
Finally, a connected open set will be referred to as a domain.

\tableofcontents

%%%%%%%%%%%%%%%%%%%%%%%%%%%%%%%%%%%%%%%%%%%%
%%%%%%%%%%%%%%%%%%%%%%%%%%%%%%%%%%%%%%%%%%%%
%%%%%%%%%%%%%%%%%%%%%%%%%%%%%%%%%%%%%%%%%%%%
%%%%%%%%%%%%%%%%%%%%%%%%%%%%%%%%%%%%%%%%%%%%
%%%%%%%%%%%%%%%%%%%%%%%%%%%%%%%%%%%%%%%%%%%%
%%%%%%%%%%%%%%%%%%%%%%%%%%%%%%%%%%%%%%%%%%%%

\section{Decay estimates near the boundary}
\label{sec:decay}

\setcounter{theorem}{0}
\setcounter{equation}{0}

We consider fully nonlinear degenerate elliptic equations in non-divergence form,
\begin{align}\label{eq:main-nonlinear}
F(x,u,Du,D^2u) = 0,%\tag{$\star$}
\end{align}
where
$F:\mathbb{R}^n \times \mathbb{R} \times \mathbb{R}^n  \times \mathbb{S}^n \rightarrow \mathbb{R}$.
We will assume through that
\begin{align}\label{eq:deg-ellipt-proper}
F(x,u,p,X) \geq  F(x,v,p,Y) \quad \text{whenever} \quad u \geq v \quad \text{and} \quad X \leq Y,
\end{align}
and adopt the standard concept of viscosity solutions:
An USC function $u : \Omega \to \mathbb{R}$ is a viscosity subsolution
if for any $\varphi \in C^2(\Omega)$ and any $x_0 \in \Omega$
such that $u - \varphi$ has a local maximum at $x_0$
it holds that
\begin{align}\label{eq:viscos1}
F(x_0,u(x_0),D\varphi(x_0),D^2\varphi(x_0)) \leq 0.
\end{align}
A LSC function $u : \Omega \to \mathbb{R}$ is a viscosity supersolution
if for any $\varphi \in C^2(\Omega)$ and any $x_0 \in \Omega$
such that $u - \varphi$ has a local minimum at $x_0$
it holds that
\begin{align}\label{eq:viscos2}
F(x_0,u(x_0),D\varphi(x_0),D^2\varphi(x_0)) \geq 0.
\end{align}
A continuous function is a viscosity solution if it is both a viscosity sub- and viscosity supersolution.

In the following we sometimes drop the word ``viscosity" and simply write subsolution, solution and supersolution.
We say that a subsolution (supersolution) is strict in a domain $\Omega$ if the equality in \eqref{eq:viscos1} (\eqref{eq:viscos2}) is strict,
and we say that a subsolution, supersolution, or solution is \emph{classical} if it is twice differentiable in $\Omega$.

To avoid technicalities, we chose to stay with the standard viscosity solutions, as defined above, even though other definitions of “weak solutions” might be more suitable for some equations allowed.
The choice of viscosity solutions is not necessary for our results; other definitions of ``weak solutions" can be considered in our arguments as long as such subsolutions (supersolutions) %are USC (LSC) and
can be compared to classical strict supersolutions (subsolutions) in the sense of Lemma \ref{le:comp-weak} below. For example, it is not necessary for our results that $F$ is continuous, even though this is often assumed in the viscosity solution framework.
%\end{remark}

We intend to prove that the decay rate of positive viscosity solutions of \eqref{eq:main-nonlinear}, vanishing on a portion of the boundary of a $C^{1,1}$-domain, can be estimated
by classical solutions of second order ODIs.
We will build a lower and an upper estimate.
Let $\Omega \in \mathbf{R}^n$ be a domain, $r > 0$ and $w \in \partial \Omega$.
For our lower decay estimate in $B(w,r)\cap \Omega$ we assume
\begin{align}\label{eq:ass_drift_sub}
F(x,s,p,X) \leq \Phi^+(\check t, s, |p|) + \mathcal{P}^+_{\lambda(\check t),\Lambda(\check t)}(X) \quad \text{with}\quad  \check t = d(x,\partial \Omega)%\\\notag %\quad
%\text{whenever} \quad x\in B(w,6r)\cap \Omega, s \in \mathbb{R}_+, p \in \mathbb{R}^n, X \in \mathbb{S}^n,
\end{align}
whenever $x\in B(w,6r)\cap \Omega, s \in \mathbb{R}_+, p \in \mathbb{R}^n, X \in \mathbb{S}^n$,
and for our upper estimate in $B(w,r)\cap \Omega$ we assume
\begin{align}\label{eq:ass_drift_super}
F(x,s,p,X) \geq \Phi^-(\bar t, s, |p|) + \mathcal{P}^-_{\lambda(\hat t),\Lambda(\hat t)}(X) \quad \text{with}\quad  \hat t = d(x,w) %\quad \text{whenever} \quad s \in \mathbb{R}_+, x, p \in \mathbb{R}^n, X \in \mathbb{S}^n,
\end{align}
whenever $x\in B(w,6r)\cap \Omega, s \in \mathbb{R}_+, p \in \mathbb{R}^n, X \in \mathbb{S}^n$.
Here,
$\lambda \leq \Lambda : (0,\infty) \to (0,\infty)$ are functions such that ${\lambda}$ is nondecreasing and ${\Lambda}$ is nonincreasing.
$\Phi^-,\Phi^+ : (0,\infty) \times (0,\infty) \times (0,\infty) \to (-\infty,\infty)$ are such that $\Phi^+$ is
%$|\Phi^-|, |\Phi^+|$ are
nonincreasing in its first argument, while
$\Phi^-$ is either nonincreasing with $\bar t = d(x,\partial \Omega)$ or nondecreasing with $\bar t = d(x,w)$.
Observe that these assumptions allow the ellipticity to vanish
and the lower-order terms to blow up
near the boundary.

Moreover,
with $\mathcal A_{\lambda,\Lambda}:=\{A \in \mathbb{S}^n : \lambda I \leq A \leq \Lambda I \}$  the Pucci operators are, as usual,
\begin{align*}
\mathcal P_{\lambda,\Lambda}^+(X) =& \sup_{A\in \mathcal A_{\lambda,\Lambda}} -\text{Tr}(AX) = -\lambda \text{Tr}(X^+) + \Lambda \text{Tr}(X^-),\\
\mathcal P_{\lambda,\Lambda}^-(X) =& \inf_{A\in \mathcal A_{\lambda,\Lambda}} -\text{Tr}(AX) = -\Lambda \text{Tr}(X^+) + \lambda \text{Tr}(X^-),
\end{align*}
where $X = X^+ - X^-$ with $X^+ \geq 0$, $X^- \geq 0$, and $X^+  X^- = 0$.

Before stating our main theorems we recall the following standard definition of sphere conditions:
A point $w \in \partial \Omega$, where $\Omega\subset \mathbb{R}^n$ is a domain,
satisfies the \emph{interior sphere condition} with radius $r_i > 0$ if
there exists $\eta^i \in \Omega$ such that $B(\eta^i, r_i) \subset \Omega$ and
$\partial B(\eta^i, r_i) \cap \partial \Omega = \{w\}$.
Similarly, $w\in \partial \Omega$ satisfies the \emph{exterior sphere condition}
with radius $r_e > 0$ if there exists $\eta^e \in \mathbb{R}^n \setminus \Omega$
such that $B(\eta^e, r_e) \subset \mathbb{R}^n \setminus \Omega$ and $\partial B(\eta^e, r_e) \cap \partial \Omega = \{w\}$.
A point $w \in \partial \Omega$ satisfies the \emph{sphere condition} with radius $r_b$ if it satisfies both the interior and exterior sphere condition with radius $r_b$.
A domain $\Omega\subset \mathbb{R}^n$ is said to satisfy the (interior/exterior) \emph{sphere condition} if the corresponding condition holds for all $w\in \partial \Omega$.
It is well known that $\Omega\subset \mathbb{R}^n$ satisfies the sphere condition if and only if $\Omega$ is a $C^{1,1}$-domain,
see \cite[Lemma 2.2]{aikawa} for a proof.

%%%%%%%%%%%%%%%%%%%%%%%%%%%%%%%%%%%%%%%%%%%%

We first consider proving lower decay estimates of viscosity supersolutions of \eqref{eq:main-nonlinear},
and to do so we will make use of classical solutions of the ODI
\begin{align}\label{eq:ODE}
%\frac{d^2h}{dt^2}
h''(t) \geq \frac{\Phi^+\left(t,h\left(t\right),h'\left(t\right)\right)}{L(t)} +  \frac{n}{r} \frac{\Lambda(t)}{\lambda(t)} h'(t),  \quad t \in(0,r), %\quad h(0)= 0,\quad h(r) = m > 0
\end{align}
where $L(t) = \lambda(t)$ if $\Phi^+$ is nonegative, and $L(t) = \Lambda(t)$ if $\Phi^+$ is nonpositive.
We prove the following lower bound.

\begin{theorem}[Lower estimate]\label{th:lower}
Let $\Omega \subset \mathbb{R}^n$ be a domain satisfying the interior sphere
condition with radius $r_i$, $w \in \partial \Omega$ and $r$ s.t. $0<2r < r_i$.
Assume \eqref{eq:ass_drift_sub} and that $v \in LSC(\overline{\Omega} \cap B(w, 6r))$ is a viscosity supersolution of \eqref{eq:main-nonlinear} in $\Omega \cap B(w,6r)$ satisfying $v \geq 0$ on $\partial \Omega \cap B(w,6r)$.

If $\check h=\check h(t)$ is an increasing classical solution of \eqref{eq:ODE}, satisfying $\check h(0) \leq 0$ and $\check h(r) \leq \inf_{\Gamma_{w,r}}
v$, then
\begin{align*}
v(x) \geq  \check h(d(x, \partial\Omega)) \quad \text{whenever} \quad x \in \Omega \cap B(w, r),
\end{align*}
where $\Gamma_{w,r} = \{x \in \Omega | r < d(x,\partial \Omega) < 3r \} \cap B(w,6r)$.
\end{theorem}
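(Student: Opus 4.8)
The estimate is pointwise, so I fix $x_0\in\Omega\cap B(w,r)$ and put $d_0:=d(x_0,\partial\Omega)$; since $w\in\partial\Omega$ and $|x_0-w|<r$ we have $d_0<r$, and it suffices to show $v(x_0)\geq\check h(d_0)$. Choose a nearest boundary point $z_0\in\partial\Omega$ to $x_0$; then $|x_0-z_0|=d_0$ and $|z_0-w|<2r$. As $0<2r<r_i$, the interior sphere condition holds at $z_0$ also with the smaller radius $2r$: this produces $\eta\in\Omega$ with $B(\eta,2r)\subset\Omega$, $\partial B(\eta,2r)\cap\partial\Omega=\{z_0\}$, with $x_0$ lying on the segment $[z_0,\eta]$ so that $|x_0-\eta|=2r-d_0$ and $d(x_0,\partial B(\eta,2r))=d_0$, and $\overline{B(\eta,2r)}\subset B(w,6r)$.

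On the annulus $\mathcal{A}:=\{\,r<|x-\eta|<2r\,\}\subset\Omega\cap B(w,6r)$ I take the radial barrier $g(x):=\check h\big(2r-|x-\eta|\big)$. With $\rho:=|x-\eta|\in(r,2r)$ and $t:=2r-\rho\in(0,r)$ one computes $Dg=-\check h'(t)\,(x-\eta)/\rho$, hence $|Dg|=\check h'(t)$ because $\check h$ is increasing, while $D^2g$ has the radial eigenvalue $\check h''(t)$ and the transverse eigenvalue $-\check h'(t)/\rho$ with multiplicity $n-1$. The core of the argument is that $g$ is a classical, hence viscosity, subsolution of \eqref{eq:main-nonlinear} in $\mathcal{A}$. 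Since $B(\eta,2r)\subset\Omega$ we have $\check t:=d(x,\partial\Omega)\geq 2r-\rho=t$, so the monotonicity built into \eqref{eq:ass_drift_sub} ($\Phi^+$ nonincreasing in its first slot, $\lambda$ nondecreasing, $\Lambda$ nonincreasing) lets me replace $\check t$ by $t$ and obtain
\[
F(x,g,Dg,D^2g)\ \leq\ \Phi^+\!\big(t,\check h(t),\check h'(t)\big)+\mathcal{P}^+_{\lambda(t),\Lambda(t)}(D^2g).
\]
Evaluating the Pucci operator on the two eigenvalues and using $\rho>r$ to bound $(n-1)/\rho$ by $n/r$ gives
\[
\mathcal{P}^+_{\lambda(t),\Lambda(t)}(D^2g)\ \leq\ -L(t)\,\check h''(t)+\tfrac{n}{r}\,\Lambda(t)\,\check h'(t),
\]
where $L(t)=\lambda(t)$ if $\check h''(t)\geq0$ and $L(t)=\Lambda(t)$ if $\check h''(t)<0$. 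Because $\check h',\Lambda/\lambda\geq 0$, the ODI \eqref{eq:ODE} permits $\check h''(t)<0$ only where $\Phi^+\leq 0$; splitting into the cases $\Phi^+\geq0$ and $\Phi^+\leq0$ and using $\Lambda\geq\lambda$, one checks that \eqref{eq:ODE}, with exactly the $L$ it prescribes, forces $F(x,g,Dg,D^2g)\leq 0$ throughout $\mathcal{A}$.

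It remains to run a comparison argument via Lemma \ref{le:comp-weak}: after, if necessary, perturbing $\check h$ into a strict classical subsolution of the equation (possible since the inequality $\rho>r$ above is strict), the lemma compares the supersolution $v$ with this subsolution on a comparison domain $D\subset\mathcal{A}$ with $x_0\in D$, once $v\geq g$ on $\partial D$; then $v(x_0)\geq g(x_0)=\check h\big(2r-|x_0-\eta|\big)=\check h(d_0)$, as desired. For the boundary values I would choose $D$ to lie between $\partial\Omega$ and the inner sphere $\{|x-\eta|=r\}$, with $\overline D$ meeting the outer sphere $\partial B(\eta,2r)$ only near $z_0$, so that $\partial D$ splits into a piece close to $z_0$, where $g$ is near $\check h(0)\leq 0$ while $v$ is near $v(z_0)\geq 0$, and a piece on (or near) $\{|x-\eta|=r\}$, where $d(x,\partial\Omega)\geq r$ places the points in $\Gamma_{w,r}$ and hence $g\leq\check h(r)\leq\inf_{\Gamma_{w,r}}v\leq v$.

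The step I expect to be the main obstacle is the first of these boundary pieces: the interior sphere construction only delivers $v\geq 0$ at the single contact point $z_0$, whereas $g\approx\check h(0)\leq 0$ on a whole cap of $\partial B(\eta,2r)$. Arranging $D$ so that this cap shrinks down to $z_0$ while $D$ still encloses $x_0$, and then controlling $v$ along it through lower semicontinuity at $z_0$ — passing to a limit and, if needed, working with the still-subsolution $g-\varepsilon$ (a subsolution by properness) — is the delicate point. By contrast, the barrier computation is a direct consequence of \eqref{eq:ass_drift_sub} and \eqref{eq:ODE}, and the inner-boundary estimate is immediate from the hypothesis $\check h(r)\leq\inf_{\Gamma_{w,r}}v$.
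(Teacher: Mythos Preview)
Your plan is essentially the paper's proof: the same interior-ball center $\eta$ with $B(\eta,2r)\subset\Omega$, the same radial barrier $g(x)=\check h(2r-|x-\eta|)$ on the annulus $\mathcal A=\{r<|x-\eta|<2r\}$, and then comparison against $v$. Your eigenvalue computation showing that $g$ is a classical (in fact \emph{strict}, since $(n-1)/\rho<n/r$ is strict) subsolution of \eqref{eq:main-nonlinear} via \eqref{eq:ass_drift_sub} and \eqref{eq:ODE} is exactly the content of Lemma~\ref{le:barrier_s}, and your case split on the sign of $\Phi^+$ matches theirs.

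Where you diverge is the step you flag as the ``main obstacle'', and there you make things harder than they are. The paper does \emph{not} shrink the comparison domain: it applies Lemma~\ref{le:comp-weak} on the full annulus $\mathcal A\subset\Omega\cap B(w,6r)$, whose boundary has exactly two pieces --- the inner sphere $\{|x-\eta|=r\}\subset\Gamma_{w,r}$, where $g=\check h(r)\leq\inf_{\Gamma_{w,r}}v\leq v$, and the outer sphere $\{|x-\eta|=2r\}\subset\overline\Omega$, where $g=\check h(0)\leq 0$ --- and then reads off $g\leq v$ on both. Your proposed subdomain $D\subsetneq\mathcal A$ touching the outer sphere only near $z_0$ would create a \emph{third} portion of $\partial D$ lying strictly inside $\mathcal A$, on which you have no control of $v-g$; that route does not close. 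You are right that the hypothesis literally gives $v\geq 0$ only on $\partial\Omega\cap B(w,6r)$, while most of the outer sphere sits in $\Omega$; the paper's proof tacitly uses $v\geq 0$ there as well (which holds in every application in the paper, the supersolutions being nonnegative in $\Omega$), and with that the comparison on all of $\mathcal A$ is a one-line application of Lemma~\ref{le:comp-weak} --- no perturbation $g-\varepsilon$, no LSC limiting argument.
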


%%%%%%%%%%%%%%%%%%%%%%%%%%%%%%%%%%%%%%%%%%%%

We next consider proving upper decay estimates of viscosity subsolutions of \eqref{eq:main-nonlinear},
and to do so, we consider the ODI
\begin{align}\label{eq:ODE2}
%\frac{d^2h}{dt^2}
h''(t) \leq \frac{\Phi^-\left(t,h\left(t\right),h'\left(t\right)\right)}{L(t)} -  \frac{n}{r} \frac{\Lambda(t)}{\lambda(t)} h'(t),  \quad t \in(0,r), %\quad h(0)= 0,\quad h(r) = m > 0
\end{align}
where $L(t) = \Lambda(t)$ if $\Phi^-$ is nonegative, and $L(t) = \lambda(t)$ if $\Phi^-$ is nonpositive.
We prove:

\begin{theorem}[Upper estimate]\label{th:upper}
Let $\Omega \subset \mathbb{R}^n$ be a domain satisfying the exterior sphere
condition with radius $r_e$, $w \in \partial \Omega$ and $r$ s.t. $0 < 2r < r_e$.
Assume \eqref{eq:ass_drift_super} and that $u\in USC(\overline{\Omega} \cap B(w, 6r))$ is a viscosity subsolution of \eqref{eq:main-nonlinear} in
$\Omega \cap B(w, 6r)$ satisfying $u \geq 0$ on $\partial \Omega \cap B(w,6r)$.

If $\hat h=\hat h(t)$ is an increasing classical solution of \eqref{eq:ODE2}, satisfying $\hat h(0) \leq 0$ and $\hat h(r) \geq \sup_{B(w,6 r)\cap \Omega} u$, then
\begin{align*}
 u(x) \leq \, \hat h(d(x, \partial\Omega)) \quad \text{for} \quad x \in \Omega \cap B(w, r).
\end{align*}
\end{theorem}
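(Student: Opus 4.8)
The plan is to argue by contradiction using the comparison principle for viscosity subsolutions against classical strict supersolutions (Lemma \ref{le:comp-weak}), exactly mirroring the strategy for Theorem \ref{th:lower} but now exploiting the \emph{exterior} sphere condition and the lower bound \eqref{eq:ass_drift_super} on $F$. Fix $w\in\partial\Omega$ with exterior tangent ball $B(\eta^e,r_e)$, where $0<2r<r_e$, and set $\rho(x)=|x-\eta^e|$, so that $\rho(x)\ge r_e$ on $\Omega$ with equality exactly at $w$, and $d(x,\partial\Omega)\le \rho(x)-r_e$ near $w$. I would then build a radial comparison function $\psi(x)=\hat h\big(\rho(x)-r_e\big)$ out of the given increasing classical solution $\hat h$ of \eqref{eq:ODE2}. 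The key computation is to feed $\psi$ into $F$: since $D\psi = \hat h'(\rho-r_e)\,\tfrac{x-\eta^e}{\rho}$ and $D^2\psi = \hat h''(\rho-r_e)\,\tfrac{(x-\eta^e)\otimes(x-\eta^e)}{\rho^2} + \tfrac{\hat h'(\rho-r_e)}{\rho}\big(I-\tfrac{(x-\eta^e)\otimes(x-\eta^e)}{\rho^2}\big)$, the eigenvalues of $D^2\psi$ are $\hat h''$ once and $\hat h'(\rho-r_e)/\rho$ with multiplicity $n-1$. Using monotonicity of $\lambda,\Lambda$ in the distance variable and $\rho\ge r_e>2r$, one estimates the Pucci term $\mathcal{P}^-_{\lambda,\Lambda}(D^2\psi)$ from below by $-\Lambda(\hat t)\,\hat h''(\hat t)\mathbf{1}_{\hat h''\ge 0} + \lambda(\hat t)\hat h''(\hat t)\mathbf{1}_{\hat h''<0} - (n-1)\Lambda(\hat t)\hat h'(\hat t)/\rho$; combined with \eqref{eq:ass_drift_super} and the sign convention $L(t)=\Lambda(t)$ (resp. $\lambda(t)$) according to the sign of $\Phi^-$, the inequality \eqref{eq:ODE2} is precisely what is needed to force $F(x,\psi(x),D\psi(x),D^2\psi(x))<0$ — i.e.\ $\psi$ is a classical strict supersolution of \eqref{eq:main-nonlinear} in $\Omega\cap B(w,6r)$. (If equality rather than strict inequality emerges, I would perturb $\hat h$ slightly, e.g.\ replace it by $\hat h+\varepsilon$ or multiply by $1+\varepsilon$, and pass to the limit $\varepsilon\to 0^+$ at the end.)

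With $\psi$ in hand, I would compare $u$ and $\psi$ on the domain $U:=\Omega\cap B(w,r)$. On the boundary $\partial U$ there are two pieces: on $\partial\Omega\cap B(w,r)$ we have $u\le 0$ while $\psi=\hat h(\rho-r_e)\ge \hat h(0)$... wait, here one uses that $\hat h(0)\le 0$ together with the hypothesis $u\ge0$ on $\partial\Omega\cap B(w,6r)$ — actually the statement as given says $u\ge0$ on the boundary, but the conclusion $u\le \hat h(d)$ with $\hat h(0)\le0$ only makes sense if $u\le0$ there, so I read the hypothesis as $u\le0$ on $\partial\Omega\cap B(w,6r)$ (a sign typo mirroring Theorem \ref{th:lower}'s $v\ge0$), and then $u\le 0\le \hat h(d(x,\partial\Omega))\le\hat h(\rho(x)-r_e)=\psi(x)$ since $\hat h$ is increasing; on $\Omega\cap\partial B(w,r)$ one has $\rho(x)-r_e\ge r$ only after checking the geometry — more robustly, one notes $d(x,\partial\Omega)$ can be up to comparable to $r$ there, and uses $\psi(x)=\hat h(\rho-r_e)$ together with $\hat h(r)\ge \sup_{\Omega\cap B(w,6r)}u\ge u(x)$; so it is cleanest to choose the comparison region so that on its outer boundary $\rho-r_e\ge r$ and invoke $\hat h$ increasing with the sup-bound. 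Thus $u\le\psi$ on $\partial U$. Lemma \ref{le:comp-weak} (comparison of a viscosity subsolution with a classical strict supersolution, using only that $F$ is proper in the sense of \eqref{eq:deg-ellipt-proper}) then yields $u\le\psi$ throughout $U$, hence $u(x)\le \hat h(\rho(x)-r_e)\le \hat h(d(x,\partial\Omega))$ is false in the wrong direction — so in fact one wants the bound the other way: since $\rho(x)-r_e\ge d(x,\partial\Omega)$ is the wrong inequality for monotone increasing $\hat h$, I would instead track the \emph{largest} ball and use $d(x,\partial\Omega)\le\rho(x)-r_e$ giving $\hat h(d)\le\hat h(\rho-r_e)=\psi$, so $u\le\psi$ gives $u\le\psi$ but we want $u\le\hat h(d)\le\psi$; to close this, I localize: near $w$, $d(x,\partial\Omega)=\rho(x)-r_e$ exactly when the nearest boundary point is $w$, and one restricts to $B(w,r)$ where this identity (or its one-sided version in the right direction) holds by the exterior sphere condition — this is the point requiring care.

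The main obstacle, as the last paragraph reveals, is the geometric bookkeeping at the outer boundary of the comparison region: reconciling $d(x,\partial\Omega)$ with the radial variable $\rho(x)-r_e$ of the exterior ball so that monotonicity of $\hat h$ points the right way, and simultaneously arranging the boundary inequality $u\le\psi$ on all of $\partial U$ with the single normalization $\hat h(r)\ge\sup u$. I expect this to be handled exactly as in the proof of Theorem \ref{th:lower}, where the analogous subtlety with the interior ball and the set $\Gamma_{w,r}$ appears; the operator-theoretic core — verifying $\psi$ is a strict classical supersolution via the Pucci eigenvalue computation and \eqref{eq:ODE2}, then applying Lemma \ref{le:comp-weak} — is routine once \eqref{eq:ass_drift_super} and the monotonicity of $\lambda,\Lambda,\Phi^-$ are used with $\rho\ge r_e>2r$ to control the $(n-1)$-fold eigenvalue and the drift term by the $\tfrac{n}{r}\tfrac{\Lambda}{\lambda}h'$ term in \eqref{eq:ODE2}.
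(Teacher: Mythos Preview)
Your core computation---the eigenvalue decomposition of $D^2\psi$ and the way the $\frac{n}{r}\frac{\Lambda}{\lambda}\hat h'$ term in \eqref{eq:ODE2} absorbs the curvature contribution---is correct and matches Lemma~\ref{le:barrier_s2}. But the proposal has a genuine gap, precisely at the point you flag and then wave away.

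First, the exterior ball must be placed at the nearest boundary point $\eta$ to $x$, not at $w$. With your choice $\psi(x)=\hat h(\rho(x)-r_e)$ for the ball at $w$, you only ever get $\rho(x)-r_e\ge d(x,\partial\Omega)$, and since $\hat h$ is increasing this gives $u(x)\le\psi(x)=\hat h(\rho(x)-r_e)$, which does \emph{not} imply $u(x)\le\hat h(d(x,\partial\Omega))$. The paper instead picks, for each $x$, the nearest point $\eta\in\partial\Omega$, takes the exterior ball there, and sets $V(x)=\hat h(|x-\eta^e_r|-r)$ on an annulus $A=B(\eta^e_r,2r)\setminus\overline{B(\eta^e_r,r)}$; collinearity of $x,\eta,\eta^e_r$ (forced by the exterior sphere condition at the nearest point) gives $|x-\eta^e_r|-r=d(x,\partial\Omega)$ exactly, which is what you need.

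Second---and this is the part not parallel to Theorem~\ref{th:lower}---once you center at $\eta$ rather than at $w$, the barrier $V$ is a strict supersolution only on $A\cap\{d(\cdot,w)\ge |\cdot-\eta^e_r|-r\}$, because in \eqref{eq:ass_drift_super} the ellipticity $\lambda(\hat t),\Lambda(\hat t)$ is evaluated at $\hat t=d(x,w)$, and the monotonicity of $\lambda,\Lambda$ only points the right way when $d(x,w)\ge\zeta:=|x-\eta^e_r|-r$. You therefore cannot apply Lemma~\ref{le:comp-weak} on all of $A\cap\Omega$. The paper closes this by a two-barrier argument: it builds a second supersolution $V^w=\hat h(|x-\eta^w_r|-r)$ from the exterior ball at $w$ (for which the constraint is automatically satisfied), uses comparison to get $u\le V^w$ on $A^w\cap\Omega$, and then checks via the triangle inequality that on the interface $\Gamma=\{d(\cdot,w)=|\cdot-\eta^e_r|-r\}$ one has $V\ge V^w\ge u$. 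This supplies the missing boundary data for the comparison with $V$ on the constrained region. Your expectation that ``this is handled exactly as in the proof of Theorem~\ref{th:lower}'' is incorrect; the paper itself remarks that the upper estimate requires this additional step, and it is the reason assumption \eqref{eq:ass_drift_super} uses $\hat t=d(x,w)$ rather than $d(x,\partial\Omega)$.
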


%%%%%%%%%%%%%%%%%%%%%%%%%%%%%%%%%%%%%%%%%%%%

Theorem \ref{th:lower} and Theorem \ref{th:upper} implies that the boundary Harnack inequality for viscosity solutions of \eqref{eq:main-nonlinear} can be proved simply by proving it for the solutions of the ODIs \eqref{eq:ODE} and \eqref{eq:ODE2}.
In particular:

\begin{corollary}\label{cor:BHI}
Suppose that $\Omega \in \mathbf{R}^n$ is a domain satisfying the sphere condition with radius $r_0$, $w \in \partial \Omega$, $0 < 2r <r_0$,
and let $u_1, u_2 \in C(\overline \Omega \cap B(w,6r))$ be viscosity solutions of \eqref{eq:main-nonlinear}, satisfying $u_1 = 0 = u_2$ on $\partial \Omega \cap B(w, 6r)$.

If $\check h=\check h(t)$ is an increasing classical solution of \eqref{eq:ODE}, $\check h(0) = 0$ and $\check h(r) \leq \inf_{\Gamma_{w,r}} u_i,$
and if $\hat h=\hat h(t)$ is an increasing classical solution of \eqref{eq:ODE2}, $\hat h(0) = 0$ and $\hat h(r) \geq \sup_{B(w,6 r)\cap \Omega} u_i$, $i = 1,2$,
then
\begin{align*}
\check  h(d(x, \partial\Omega)) \, \leq u_i(x) \leq \, \hat h(d(x, \partial\Omega)) \quad \text{for} \quad x \in \Omega \cap B(w, r), \quad i = 1,2.
\end{align*}
Moreover, the boundary Harnack inequality for $u_1$ and $u_2$ is implied by the boundary Harnack inequality for $\check h$ and $\hat h$, i.e.: If there exists a constant $A$ such that
\begin{align*}
1 \leq \frac{\hat h(t)}{\check h(t)} \leq A \quad \text{whenever} \quad t \in (0,r),
\end{align*}
then
\begin{align*}
A^{-1} \leq \frac{u_1(x)}{u_2(x)} \leq A \quad \text{whenever} \quad  x \in \Omega \cap B(w, r).
\end{align*}
\end{corollary}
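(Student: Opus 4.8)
The plan is to derive the corollary directly from Theorems \ref{th:lower} and \ref{th:upper} by applying each of them to both $u_1$ and $u_2$. First I would observe that since $\Omega$ satisfies the full sphere condition with radius $r_0$ and $0<2r<r_0$, it satisfies both the interior sphere condition (needed for Theorem \ref{th:lower}) and the exterior sphere condition (needed for Theorem \ref{th:upper}) at $w$ with radius $r_0$. Because $u_1,u_2$ are viscosity solutions of \eqref{eq:main-nonlinear}, they are in particular both viscosity supersolutions and viscosity subsolutions; and since $u_i=0$ on $\partial\Omega\cap B(w,6r)$, the sign hypotheses $v\geq 0$ and $u\geq 0$ on $\partial\Omega\cap B(w,6r)$ in the two theorems are satisfied (the upper estimate only needs $u_i\geq 0$ on the boundary portion, which holds since $u_i$ vanishes there). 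Applying Theorem \ref{th:lower} to $v=u_i$ with the given $\check h$ — whose hypotheses $\check h(0)=0\leq 0$ and $\check h(r)\leq\inf_{\Gamma_{w,r}}u_i$ are exactly those assumed — yields $u_i(x)\geq\check h(d(x,\partial\Omega))$ on $\Omega\cap B(w,r)$; applying Theorem \ref{th:upper} to $u=u_i$ with the given $\hat h$ yields $u_i(x)\leq\hat h(d(x,\partial\Omega))$ on $\Omega\cap B(w,r)$. This establishes the two-sided bound
\begin{align*}
\check h(d(x,\partial\Omega))\leq u_i(x)\leq\hat h(d(x,\partial\Omega)),\qquad x\in\Omega\cap B(w,r),\ i=1,2.
\end{align*}

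For the boundary Harnack statement, fix $x\in\Omega\cap B(w,r)$ and write $t=d(x,\partial\Omega)\in(0,r)$; note $\check h(t)>0$ since $\check h$ is increasing with $\check h(0)=0$, so all quotients below are well defined. From the two-sided bound applied with $i=1$ and $i=2$ we get
\begin{align*}
\frac{u_1(x)}{u_2(x)}\leq\frac{\hat h(t)}{\check h(t)}\leq A\quad\text{and}\quad\frac{u_1(x)}{u_2(x)}\geq\frac{\check h(t)}{\hat h(t)}=\left(\frac{\hat h(t)}{\check h(t)}\right)^{-1}\geq A^{-1},
\end{align*}
where the middle inequalities use precisely the hypothesis $1\leq\hat h(t)/\check h(t)\leq A$ on $(0,r)$. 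This gives $A^{-1}\leq u_1(x)/u_2(x)\leq A$ on $\Omega\cap B(w,r)$, as claimed.

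There is no serious obstacle here, since all the analytic work is contained in Theorems \ref{th:lower} and \ref{th:upper}; the corollary is a bookkeeping argument. The only points requiring a little care are: (a) checking that the hypotheses of the two theorems are genuinely met — in particular that a function vanishing on $\partial\Omega\cap B(w,6r)$ satisfies both boundary sign conditions, and that a viscosity solution is simultaneously a sub- and supersolution so that both estimates apply to the same $u_i$; (b) ensuring $\check h(t)>0$ for $t\in(0,r)$ so that division is legitimate, which follows from monotonicity together with $\check h(0)=0$; and (c) making sure the distance $d(x,\partial\Omega)$ of a point in $\Omega\cap B(w,r)$ lies in the interval $(0,r)$ on which the ODI comparison $1\leq\hat h/\check h\leq A$ is assumed — this holds because $x\in\Omega$ forces $d(x,\partial\Omega)>0$ and $x\in B(w,r)$ with $w\in\partial\Omega$ forces $d(x,\partial\Omega)\leq|x-w|<r$.
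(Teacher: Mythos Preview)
Your proposal is correct and follows exactly the approach the paper takes: the paper's own proof is the single sentence ``This follows directly from Theorems \ref{th:lower} and \ref{th:upper}'', and you have simply spelled out the bookkeeping that justifies that sentence. The care you take in points (a)--(c) is appropriate and nothing is missing.
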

\begin{proof}
This follows directrly form Theorems \ref{th:lower} and \ref{th:upper}.
\end{proof}

\bigskip

\noindent
We note that, by construction, the inequality
$\check h \leq \hat h$ always holds, provided that
$\check h(0) \leq \hat h(0)$ and $\check h(r) \leq \hat h(r)$; this reflects a comparison principle for solutions of the `help' ODIs.

It may be disappointing to the reader that we do not proceed by establishing ´as weak as possible' conditions on the ODIs \eqref{eq:ODE} and \eqref{eq:ODE2} that would ensure, for example, bounds of the form
$t \lesssim \check h(t) \leq \hat h \lesssim t$.
However, such conditions often turn out to be technical and non-trivial to verify.
Therefore, we here instead refer the reader to investigate the particular ODIs corresponding to the PDE under consideration.
In a less general setting conditions of this type, which are sharp in certain senses, can be found in \cite{LOT20}.
We will demonstrate below that, in several situations, our theorems allow us to obtain meaningful results even when these conditions in \cite{LOT20} are not satisfied.

%%%%%%%%%%%%%%%%%%%%%%%%%%%%%%%%%%%%%%%%%%%%
%%%%%%%%%%%%%%%%%%%%%%%%%%%%%%%%%%%%%%%%%%%%
%%%%%%%%%%%%%%%%%%%%%%%%%%%%%%%%%%%%%%%%%%%%

\subsection{Proofs of main results: Theorems \ref{th:lower} and \ref{th:upper}}

The proofs of theorems \ref{th:lower} and \ref{th:upper} rely on the following two lemmas,
giving a classical sub- and supersolution of the extremal PDEs.
\begin{lemma}
\label{le:barrier_s}
Suppose that $\Omega \subset \mathbf{R}^n$,
$r > 0$ and assume that $\check h$ is an increasing classical solution of the ODI \eqref{eq:ODE} for $ t \in (0,r)$.
If $\partial B(y,2r) \subset \Omega$, then
\begin{align*}%\label{eq:super}
U(x) =   \check h(2r - |x-y|) \quad \text{satisfies} \quad \Phi(d(x,\partial \Omega), U, |DU|) + \mathcal{P}^+_{\lambda,\Lambda}(D^2U) < 0,
\end{align*}
whenever $x \in B(y, 2r) \setminus B(y, r)$.
\end{lemma}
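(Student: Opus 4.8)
The plan is to compute the relevant derivatives of the radial function $U(x) = \check h(2r - |x-y|)$ directly, substitute into the expression $\Phi^+(d(x,\partial\Omega), U, |DU|) + \mathcal{P}^+_{\lambda,\Lambda}(D^2U)$, and use the ODI \eqref{eq:ODE} together with the monotonicity assumptions on $\lambda, \Lambda, \Phi^+$ to conclude the inequality is strictly negative. Write $\rho = |x - y|$ and $t = 2r - \rho$, so that for $x \in B(y,2r)\setminus B(y,r)$ we have $t \in (0,r)$, which is exactly the interval on which $\check h$ solves the ODI. First I would record the standard formulas for a radial function: with $e = (x-y)/\rho$, $DU = -\check h'(t)\, e$, hence $|DU| = \check h'(t)$ since $\check h$ is increasing; and $D^2 U = \check h''(t)\, e \otimes e - \frac{\check h'(t)}{\rho}\,(I - e\otimes e)$. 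The eigenvalues of $D^2U$ are therefore $\check h''(t)$ (once, in the radial direction) and $-\check h'(t)/\rho$ (with multiplicity $n-1$, in the tangential directions).

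Next I would evaluate the Pucci operator on this matrix. Since $\check h' > 0$ and $\rho > 0$, the $n-1$ tangential eigenvalues are strictly negative, contributing $\Lambda(t)\cdot (n-1)\check h'(t)/\rho$ to $\mathcal{P}^+_{\lambda,\Lambda}(D^2U)$. The radial eigenvalue $\check h''(t)$ contributes $-\lambda(t)\check h''(t)$ if $\check h'' \geq 0$ at that point, or $-\Lambda(t)\check h''(t)$ if $\check h'' < 0$; in either case it is bounded above by $-\lambda(t)\check h''(t)$ when $\check h''\geq 0$ — but to get a clean bound valid regardless of sign I would use $\mathcal{P}^+_{\lambda,\Lambda}(D^2U) \leq -L(t)\check h''(t) + \Lambda(t)\frac{n-1}{\rho}\check h'(t)$ where $L(t)$ is chosen to make the substitution into \eqref{eq:ODE} work (this is precisely why the ODI \eqref{eq:ODE} carries the factor $1/L(t)$ with the case split on the sign of $\Phi^+$). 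Then, using $\rho \geq r$ on the annulus so that $1/\rho \leq 1/r$, and $n - 1 \le n$, I bound $\Lambda(t)\frac{n-1}{\rho}\check h'(t) \leq \frac{n}{r}\Lambda(t)\check h'(t)$. Combining, $\mathcal{P}^+_{\lambda,\Lambda}(D^2U) \leq -L(t)\check h''(t) + \frac{n}{r}\Lambda(t)\check h'(t)$.

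The remaining step is to handle the lower-order term $\Phi^+(d(x,\partial\Omega), U(x), |DU(x)|)$. Here I would use that $\Phi^+$ is nonincreasing in its first argument together with the geometric fact that, since $\partial B(y,2r)\subset\Omega$ and $x$ lies inside the annulus, the distance $d(x,\partial\Omega)$ is at least $t = 2r - |x-y|$ (the point of $\partial B(y,2r)$ nearest to $x$ realizes this distance to the complement-boundary, or more carefully: $B(y,2r)\subset\Omega$ forces $d(x,\partial\Omega)\ge 2r-|x-y|=t$). Hence $\Phi^+(d(x,\partial\Omega), U, \check h'(t)) \leq \Phi^+(t, \check h(t), \check h'(t))$, using also $U(x) = \check h(t)$. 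Assembling all pieces: $\Phi^+(d(x,\partial\Omega),U,|DU|) + \mathcal{P}^+_{\lambda,\Lambda}(D^2U) \leq \Phi^+(t,\check h(t),\check h'(t)) - L(t)\check h''(t) + \frac{n}{r}\Lambda(t)\check h'(t)$, and dividing the defining inequality \eqref{eq:ODE} by $L(t) > 0$ shows this last expression equals $L(t)\big(\tfrac{\Phi^+}{L(t)} + \tfrac{n}{r}\tfrac{\Lambda(t)}{\lambda(t)}\check h'(t) - \check h''(t)\big) \cdot(\text{sign bookkeeping}) \le 0$; the \emph{strict} inequality comes from the fact that $\check h' > 0$ makes the tangential term strictly positive before it is absorbed, or alternatively one notes that if \eqref{eq:ODE} were an equality the barrier inequality would still be $\le 0$ and one perturbs — but more straightforwardly, since $\check h'(t)>0$ we actually have $\Lambda(t)\frac{n-1}{\rho}\check h'(t) < \frac{n}{r}\Lambda(t)\check h'(t)$ strictly whenever $n\ge 2$ (and one checks the $n=1$ case separately, where there are no tangential directions but then $1/\rho$ versus $1/r$... ), giving the strict sign.

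The main obstacle I anticipate is the careful bookkeeping of the case split on the sign of $\Phi^+$ (equivalently, which of $\lambda(t)$ or $\Lambda(t)$ plays the role of $L(t)$) so that the Pucci upper bound and the ODI \eqref{eq:ODE} match up with the correct coefficients — in particular ensuring that replacing $\lambda$ by $\Lambda$ (or vice versa) in front of $\check h''$ goes in the direction that preserves the inequality. The monotonicity of $\lambda$ (nondecreasing) and $\Lambda$ (nonincreasing) is what guarantees that evaluating these functions at $t = d(x,\partial\Omega)$ in the ODI versus at the genuine distance in the PDE is consistent; I would state explicitly where each monotonicity hypothesis is used. A secondary point requiring a short argument is the distance comparison $d(x,\partial\Omega) \geq 2r - |x-y|$, which follows since any point of $\partial\Omega$ lies outside $B(y,2r)$, so its distance to $x$ is at least $2r - |x-y|$.
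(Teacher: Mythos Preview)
Your approach is essentially the same as the paper's: compute the radial Hessian, read off the eigenvalues $\check h''(t)$ and $-\check h'(t)/\rho$ (multiplicity $n-1$), feed these into $\mathcal{P}^+$, compare $d(x,\partial\Omega)\geq t$ via the containment $B(y,2r)\subset\Omega$, and invoke the monotonicity of $\lambda,\Lambda,\Phi^+$. The strict inequality indeed comes from $(n-1)/\rho < n/r$ on the annulus, exactly as the paper concludes.

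There is, however, one genuine gap in your ``clean bound'' $\mathcal{P}^+_{\lambda,\Lambda}(D^2U)\leq -L(t)\check h''(t)+\Lambda(t)\tfrac{n-1}{\rho}\check h'(t)$. In the regime $\Phi^+\leq 0$ (so $L=\Lambda$) it can happen that $\check h''(t)\geq 0$; then the radial eigenvalue contributes exactly $-\lambda\,\check h''$ to $\mathcal{P}^+$, and $-\lambda\,\check h''\leq -\Lambda\,\check h''$ is \emph{false}. So you cannot bound $\mathcal{P}^+$ alone by $-L\check h''+\cdots$ in this sub-case. The fix --- which is what the paper effectively does, and which you correctly flagged as the main obstacle --- is to keep the $\Phi^+$ term attached: multiply the ODI by $\lambda$ to get $\lambda\check h''\geq \tfrac{\lambda}{\Lambda}\Phi^+ + \tfrac{n}{r}\Lambda\check h'$, and then use $\Phi^+\leq 0$ together with $\lambda/\Lambda\leq 1$ to conclude $\tfrac{\lambda}{\Lambda}\Phi^+\geq \Phi^+$, hence $\lambda\check h''\geq \Phi^+ + \tfrac{n}{r}\Lambda\check h'$. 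This is the step where the sign hypothesis on $\Phi^+$ actually earns its keep; once you insert it, the combined expression $\Phi^+ - \lambda\check h'' + \Lambda\tfrac{n-1}{\rho}\check h'$ is bounded by $\Lambda\check h'\bigl(\tfrac{n-1}{\rho}-\tfrac{n}{r}\bigr)<0$ as desired. With this correction your plan goes through and matches the paper's argument.
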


\begin{lemma}
\label{le:barrier_s2}
Suppose that %$\Omega \subset \mathbf{R}^n$,
$w \in \mathbf{R}^n$, %\partial \Omega$,
$r > 0$ and assume that $\hat h$ is an increasing classical solution of the ODI \eqref{eq:ODE2} for $t \in (0,r)$.
Then
$$
V(x) = \hat h(|x-y|-r) \quad  \text{satisfies}\quad \Phi^-(d(x,w), V, |DV|) + \mathcal{P}^-_{\lambda,\Lambda}(D^2V) > 0,
$$
whenever $x \in (B(y, 2r) \setminus B(y, r)) \cap \{x: d(x,w) \geq |x-y|-r\}$.
\end{lemma}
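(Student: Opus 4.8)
The plan is to prove Lemma \ref{le:barrier_s2} by a direct computation, exactly mirroring the proof of Lemma \ref{le:barrier_s} but with all inequalities reversed. Write $V(x) = \hat h(\rho - r)$ where $\rho = |x - y|$, so that $t := \rho - r$ runs over $(0,r)$ when $x \in B(y,2r) \setminus B(y,r)$. First I would compute the derivatives: $DV = \hat h'(t)\, \frac{x-y}{\rho}$, hence $|DV| = \hat h'(t)$ since $\hat h$ is increasing; and
\begin{align*}
D^2 V = \hat h''(t)\, \frac{(x-y)\otimes(x-y)}{\rho^2} + \frac{\hat h'(t)}{\rho}\left(I - \frac{(x-y)\otimes(x-y)}{\rho^2}\right).
\end{align*}
Thus $D^2V$ has the single eigenvalue $\hat h''(t)$ in the radial direction $(x-y)/\rho$ and the eigenvalue $\hat h'(t)/\rho$ with multiplicity $n-1$ in the tangential directions.

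Next I would evaluate $\mathcal P^-_{\lambda,\Lambda}(D^2V)$. Since $\hat h$ is increasing, $\hat h'(t)/\rho > 0$, so the tangential eigenvalues are positive and contribute to $X^+$; the radial eigenvalue $\hat h''(t)$ could have either sign. Recalling $\mathcal P^-_{\lambda,\Lambda}(X) = -\Lambda \operatorname{Tr}(X^+) + \lambda \operatorname{Tr}(X^-)$, and using monotonicity of $\lambda, \Lambda$ to replace $\lambda(t), \Lambda(t)$ by values at the appropriate argument (here $d(x,w)$, which on the relevant set satisfies $d(x,w) \geq |x-y| - r = t$, so $\lambda(d(x,w)) \geq \lambda(t)$ and $\Lambda(d(x,w)) \leq \Lambda(t)$), one obtains a lower bound of the form
\begin{align*}
\mathcal P^-_{\lambda,\Lambda}(D^2V) \geq -\Lambda(t)\,(\hat h''(t))^+ - \lambda(t)\,(\hat h''(t))^- - \Lambda(t)\, \frac{(n-1)\hat h'(t)}{\rho}.
\end{align*}
Since $\rho = t + r \geq r$, we can bound the last term below by $-\frac{n}{r}\Lambda(t)\hat h'(t)$ (absorbing the $(n-1)/\rho \leq n/r$ estimate, perhaps after also dealing with the $\lambda$ versus $\Lambda$ normalization to produce the factor $\Lambda/\lambda$ as in \eqref{eq:ODE2}). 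Combining with the $\Phi^-$ term and using the defining ODI \eqref{eq:ODE2} — which says precisely that $\hat h''(t)$ is small enough (bounded above) so that $L(t)\hat h''(t) + \tfrac{n}{r}\tfrac{\Lambda}{\lambda}L(t)\hat h'(t) \leq \Phi^-(t,\hat h,\hat h')$ — gives $\Phi^-(d(x,w),V,|DV|) + \mathcal P^-_{\lambda,\Lambda}(D^2V) > 0$, where the strictness comes from the strict inequality $\rho > r$ on the open annulus (or from a strict inequality built into how the help ODI is used).

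The main obstacle I anticipate is bookkeeping the sign cases for $\hat h''$ together with the choice of $L(t)$ (either $\lambda(t)$ or $\Lambda(t)$ depending on the sign of $\Phi^-$) and the monotonicity directions of $\lambda, \Lambda$: one has to check that in each case the term $L(t)\hat h''(t)$ appearing in \eqref{eq:ODE2} is the one that genuinely dominates $\mathcal P^-$ from below, so that the ODI inequality transfers correctly. A secondary subtlety is the restriction to the set $\{x : d(x,w) \geq |x-y| - r\}$, which is exactly what is needed to apply the monotonicity of $\lambda$ and $\Lambda$ (and of $\Phi^-$ in the nondecreasing case) at the argument $d(x,w)$ rather than at $t = |x-y|-r$; I would make sure to invoke this set restriction at precisely the step where those monotonicities are used. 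Apart from that, the argument is a routine chain-rule-and-Pucci computation dual to Lemma \ref{le:barrier_s}.
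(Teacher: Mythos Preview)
Your approach is essentially the paper's own: compute the radial Hessian, read off the eigenvalues, evaluate $\mathcal P^-$, invoke the ODI \eqref{eq:ODE2}, and close using the monotonicity of $\lambda,\Lambda,\Phi^-$ on the set $\{d(x,w)\ge |x-y|-r\}$ together with $\rho>r$ for strictness. The paper organises the case split by the sign of $\Phi^-$ (which, via the ODI and $\hat h'>0$, controls the sign of $\hat h''$) rather than directly by the sign of $\hat h''$, but this is only a cosmetic difference.

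One genuine slip: your displayed lower bound for $\mathcal P^-_{\lambda,\Lambda}(D^2V)$ has the wrong sign on the $(\hat h'')^-$ term. Since $\mathcal P^-(X)=-\Lambda\operatorname{Tr}(X^+)+\lambda\operatorname{Tr}(X^-)$, the negative radial eigenvalue contributes $+\lambda(\hat t)(\hat h'')^-$, not $-\lambda(t)(\hat h'')^-$. This is not merely cosmetic: with your sign the ODI (an \emph{upper} bound on $\hat h''$) cannot be fed in to produce a \emph{lower} bound on $\Phi^-+\mathcal P^-$; with the correct sign you get $-\lambda(t)\hat h''\ge -\Phi^-(t)+\tfrac{n}{r}\Lambda(t)\hat h'$ and the argument closes exactly as you outline. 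You will catch this immediately upon trying to substitute the ODI, but it is worth flagging now. Also note that the $\Phi^-$ monotonicity comes in two flavours (nonincreasing with $\bar t=d(x,\partial\Omega)$ or nondecreasing with $\bar t=d(x,w)$), and both are handled simultaneously by the sandwich $d(x,\partial\Omega)\le \zeta\le d(x,w)$; you only mention the nondecreasing case.
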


%%%%%%%%%%%%%%%%%%%%%%%%%%%%%%%%%%%%%%%%%%%%
%%%%%%%%%%%%%%%%%%%%%%%%%%%%%%%%%%%%%%%%%%%%
%%%%%%%%%%%%%%%%%%%%%%%%%%%%%%%%%%%%%%%%%%%%

\noindent
{\bf Proof of Lemma \ref{le:barrier_s}.}
Put $\Xi = 2r - |x-y|$.
Differentiating yields
\begin{align}\label{eq:grad-beg}
\frac{\partial U}{\partial x_i} = -\frac{x_i-y_i}{|x - y|} \check h'\left(\Xi\right), \quad 1 \leq i \leq n, \quad
\vert D U \vert = \check h'\left(\Xi\right),
\end{align}
and the second derivatives become
\begin{align*}
\frac{\partial^2 U}{\partial x_i^2}
= %&
 \left( \frac{x_i-y_i}{|x - y|}\right)^2 \check  h''(\Xi)%\\
+%&
  \left(-\frac{1}{|x-y|} + \frac{(x_i-y_i)^2}{|x-y|^3} \right) \check h'\left(\Xi\right),
\end{align*}
for $1 \leq i \leq n$,
giving
\begin{align*}
\text{Tr}(D^2 U) %&
=  \check h''(\Xi) - \frac{n-1}{|x-y|} \check h'\left(\Xi\right).
\end{align*}
Assume $\Phi^+\geq 0$,
then from \eqref{eq:ODE} we have $\check h''(t) \geq \frac{\Phi^+\left(t,\check h\left(t\right),\check h'\left(t\right)\right)}{\lambda(t)} +  \frac{n}{r} \frac{\Lambda(t)}{\lambda(t)} \check h'(t)$ and hence
\begin{align*}
\text{Tr}(D^2 U)
\geq \frac{\Phi^+\left(\Xi,\check h\left(\Xi\right),\check h'\left(\Xi\right)\right)}{\lambda(\Xi)}  +  \frac{n}{r} \frac{\Lambda(\Xi)}{\lambda(\Xi)} \check h'(\Xi) - \frac{n-1}{|x-y|} \check h'\left(\Xi\right).
\end{align*}
Recall $\check h'>0$ and decompose $D^2 U = \left(D^2 U \right)^+ - \left(D^2 U \right)^-$ so that
\begin{align*}
\text{Tr} \left(D^2 U \right)^+ &\geq \frac{\Phi^+\left(\Xi,\check h\left(\Xi\right),\check h'\left(\Xi\right)\right)}{\lambda(\Xi)}  +  \frac{n}{r} \frac{\Lambda(\Xi)}{\lambda(\Xi)} \check h'(\Xi)\\ %\quad \textrm{and} \quad
\text{Tr} \left(D^2 U \right)^- &\leq  \frac{n-1}{|x-y|}\check h'\left(\Xi\right).
\end{align*}
The facts that $U = \check h\left(\Xi\right)$, and $|DU| = \check h'(\Xi)$ according to \eqref{eq:grad-beg}, give
\begin{align}\label{eq:super_proof_2}
\Phi^+&(d(x,\partial \Omega), U, |DU|) + \mathcal{P}^+_{\lambda(d(x,\partial\Omega)),\Lambda(d(x,\partial\Omega))}(D^2U) \notag\\
&\leq \Phi^+\left(d(x,\partial\Omega), \check h\left(\Xi\right), \check h'\left(\Xi\right)\right)
- \lambda(d(x,\partial\Omega)) \frac{\Phi^+\left(\Xi,\check h\left(\Xi\right),\check h'\left(\Xi\right)\right)}{\lambda(\Xi)}\notag\\
&+ \Lambda(d(x,\partial\Omega)) \frac{n-1}{|x-y|}\check h'\left(\Xi\right)
-\lambda(d(x,\partial\Omega))  \frac{n}{r} \frac{\Lambda(\Xi)}{\lambda(\Xi)} \check h'(\Xi).
\end{align}
By the assumptions in the lemma, geometry implies
$\Xi = 2r - |x-y| \leq d(x,\partial \Omega)$,
so that by the monotonicity assumptions on $\lambda, \Lambda$ and $\Phi(t, \cdot,\cdot)$,
\begin{align*}
\lambda(d(x,\partial\Omega)) \geq \lambda(\Xi), \quad
\Lambda(d(x,\partial\Omega)) \leq \Lambda(\Xi), \quad
\Phi^+(d(x,\partial\Omega),\cdot,\cdot) \leq \Phi^+(\Xi,\cdot,\cdot).
\end{align*}
This, together with \eqref{eq:super_proof_2}, allows us to conclude the following:
\begin{align}\label{eq:super_proof_2_2}
%F(x,U,DU,D^2U)
\Phi^+(d(x,\partial \Omega), U, |DU|) + \mathcal{P}^+_{\lambda(d(x,\partial\Omega)),\Lambda(d(x,\partial\Omega))}(D^2U) \leq  \Lambda(\Xi) \check h'\left(\Xi\right)  \left(\frac{n-1}{|x-y|} - \frac{n}{r} \right) < 0,
\end{align}
where the last inequality follows from $r < |x-y| < 2r$ and $\check h' > 0$.
This concludes the proof in the case $\Phi^+ \geq 0$.

If $\Phi^+ \leq 0$, then by \eqref{eq:ODE} we have
$
\check h''(t) \geq
\frac{\Phi^+\left(t,\check h\left(t\right),\check h'\left(t\right)\right)}{\Lambda(t)} +  \frac{n}{r} \frac{\Lambda(t)}{\lambda(t)} \check h'(t)
$
so that
\begin{align*}
\text{Tr} \left(D^2 U \right)^+ &\geq \frac{n}{r} \frac{\Lambda(\Xi)}{\lambda(\Xi)} \check h'(\Xi), \\
%\quad \textrm{and} \quad
\text{Tr} \left(D^2 U \right)^- &\leq -\frac{\Phi^+\left(\Xi,\check h\left(\Xi\right),\check h'\left(\Xi\right)\right)}{\Lambda(\Xi)} + \frac{n-1}{|x-y|} \check h'(\Xi),
\end{align*}
and thus, instead of \eqref{eq:super_proof_2}, we end up with
\begin{align*}%\label{eq:super_proof_33}
\Phi^+&(d(x,\partial \Omega), U, |DU|) + \mathcal{P}^+_{\lambda(d(x,\partial\Omega)),\Lambda(d(x,\partial\Omega))}(D^2U) \\
&\leq  \Phi^+\left(d(x,\partial\Omega), \check h\left(\Xi\right), \check h'\left(\Xi\right)\right)
- \Lambda(d(x,\partial\Omega)) \frac{\Phi^+\left(\Xi,\check h\left(\Xi\right),\check h'\left(\Xi\right)\right)}{\Lambda(\Xi)}\\
&+   \Lambda(d(x,\partial\Omega)) \frac{n-1}{|x-y|}\check h'\left(\Xi\right)\notag
  -\lambda(d(x,\partial\Omega))  \frac{n}{r}\frac{\Lambda(\Xi)}{\lambda(\Xi)} \check h'(\Xi).
\end{align*}
Using the monotonicity of $\lambda$, $\Lambda$ and $\Phi^+(t,\cdot,\cdot)$, and $r < |x-y| < 2r$ and $\check h' > 0$, we realize that this inequality is also implied by \eqref{eq:super_proof_2_2}.
The proof of Lemma \ref{le:barrier_s} is complete. $\hfill\Box$\\

%%%%%%%%%%%%%%%%%%%%%%%%%%%%%%%%%%%%%%%%%%%%
%%%%%%%%%%%%%%%%%%%%%%%%%%%%%%%%%%%%%%%%%%%%
%%%%%%%%%%%%%%%%%%%%%%%%%%%%%%%%%%%%%%%%%%%%

\noindent
{\bf Proof of Lemma \ref{le:barrier_s2}.}
The arguments are very similar to the proof of Lemma \ref{le:barrier_s}. %we just briefly outline the proof.
With $\zeta = |x-y| - r$ we have
\begin{align*}
\text{Tr}(D^2 V)
= \hat h''(\zeta) + \frac{n-1}{|x-y|}\hat h'\left(\zeta\right),
\end{align*}
and when $\Phi^-\leq 0$ it follows from the ODI in \eqref{eq:ODE2} that
\begin{align*}
\text{Tr} \left(D^2 V \right)^+ &\leq \frac{n-1}{|x-y|} \hat h'\left(\zeta\right)\\ %\quad \textrm{and} \quad
\text{Tr} \left(D^2 V \right)^- &\geq  -\frac{\Phi^-\left(\zeta,\hat h\left(\zeta\right),\hat h'\left(\zeta\right)\right)}{\lambda(\zeta)}  +  \frac{n}{r} \frac{\Lambda(\zeta)}{\lambda(\zeta)} \hat h'(\zeta),
\end{align*}
so that
\begin{align*}
\Phi^-&(\bar t, V, |DV|) + \mathcal{P}^-_{\lambda(\hat t),\Lambda(\hat t)}(D^2V) \\
&\geq \Phi^-(\bar t, \hat h\left(\zeta\right), \hat h'\left(\zeta\right))
- \lambda(\hat t) \frac{\Phi^-\left(\zeta,\hat h\left(\zeta\right),\hat h'\left(\zeta\right)\right)}{\lambda(\zeta)} \\
&- \Lambda(\hat t) \frac{n-1}{|x-y|}\hat h'\left(\zeta\right)
+\lambda(\hat t)  \frac{n}{r} \frac{\Lambda(\zeta)}{\lambda(\zeta)} \hat h'(\zeta).
\end{align*}
Geometry and the assumption in the Lemma imply $d(x,\partial \Omega) \leq \zeta \leq d(x,w)$,
%and our assumption states that $\bar t = d(x,w)$ and $\Phi^-$,
which, together with the monotonicity assumption on $\Phi^-$;
($\Phi^-$ is either nonincreasing with $\bar t = d(x,\partial \Omega)$ or nondecreasing with $\bar t = d(x,w)$),
implies
\begin{align}\label{eq:monoton_phi-}
\Phi^-(\bar t, \cdot, \cdot) \geq \Phi^-(\zeta,\cdot,\cdot).
\end{align}
Moreover, $\lambda$ is nondecreasing %and $\Lambda$ is nonincreasing,
so that $\lambda(\hat t) \geq \lambda(\zeta)$.
Therefore (recall $\Phi^- \leq 0$),
$$
\Phi^-(\bar t, \hat h\left(\zeta\right), \hat h'\left(\zeta\right)) \geq \frac{\lambda(\hat t)}{\lambda(\zeta)}\Phi^-(\zeta,\hat h\left(\zeta\right),\hat h'\left(\zeta\right)),
$$
and thus, it is enough to prove
\begin{align}\label{eq:pedagogiskreferens}
- \Lambda(\hat t) \frac{n-1}{|x-y|}\hat h'\left(\zeta\right)
 +\lambda(\hat t)  \frac{n}{r} \frac{\Lambda(\zeta)}{\lambda(\zeta)} \hat h'(\zeta) > 0.
\end{align}
Once again $\lambda(\hat t) \geq \lambda(\zeta)$,  $\Lambda(\hat t) \leq \Lambda(\zeta)$,
and thus we only need to show the following;
\begin{align*}
- \Lambda(\zeta) \frac{n-1}{|x-y|}\hat h'\left(\zeta\right)
 + \frac{n}{r} {\Lambda(\zeta)} \hat h'(\zeta) > 0,
\end{align*}
which holds by $r < |x-y| < 2r$ and $\hat h' > 0$.

When $\Phi^-\geq 0$ we have
\begin{align*}
\text{Tr} \left(D^2 V \right)^+ &\leq \frac{n-1}{|x-y|}\hat h'\left(\zeta\right) +  \frac{\Phi^-\left(\zeta,\hat h\left(\zeta\right),\hat h'\left(\zeta\right)\right)}{\Lambda(\zeta)} \\
%\quad \textrm{and} \quad
\text{Tr} \left(D^2 V \right)^- &\geq \frac{n}{r} \frac{\Lambda(\zeta)}{\lambda(\zeta)} \hat h'(\zeta),
\end{align*}
so that
\begin{align*}
\Phi^-&(\bar t, V, |DV|) + \mathcal{P}^-_{\lambda(\hat t),\Lambda(\hat t)}(D^2V) \\
&\geq \Phi^-(\bar t, \hat h\left(\zeta\right), \hat h'\left(\zeta\right))
- \Lambda(\hat t) \frac{\Phi^-\left(\zeta, \hat h\left(\zeta\right), \hat h'\left(\zeta\right)\right)}{\Lambda(\zeta)} \\
&- \Lambda(\hat t) \frac{n-1}{|x-y|}\hat h'\left(\zeta\right)
+\lambda(\hat t)  \frac{n}{r} \frac{\Lambda(\zeta)}{\lambda(\zeta)} \hat h'(\zeta).
\end{align*}
By the assumptions, we conclude that \eqref{eq:monoton_phi-} holds also in this case, as well as $\Lambda(\hat t) \leq \Lambda(\zeta)$, so that
$$
\Phi^-(\bar t, \hat h\left(\zeta\right), \hat h'\left(\zeta\right)) \geq \frac{\Lambda(\hat t)}{\Lambda(\zeta)}\Phi^-\left(\zeta, \hat h\left(\zeta\right), \hat h'\left(\zeta\right)\right),
$$
and it is enough to prove
\begin{align*}
%\Phi^-(d(x,w), V, |DV|) + \mathcal{P}^-_{\lambda,\Lambda}(D^2V) \geq
- \Lambda(\hat t) \frac{n-1}{|x-y|}\hat h'\left(\zeta\right)
 +\lambda(\hat t)  \frac{n}{r} \frac{\Lambda(\zeta)}{\lambda(\zeta)} \hat h'(\zeta) > 0,
\end{align*}
which is equivalent to \eqref{eq:pedagogiskreferens}.
We are thus back in the same situation as with
$\phi^- \leq 0$ and
the proof of Lemma \ref{le:barrier_s2} is complete.
$\hfill \Box$ \\

%%%%%%%%%%%%%%%%%%%%%%%%%%%%%%%%%%%%%%%%%%%%
%%%%%%%%%%%%%%%%%%%%%%%%%%%%%%%%%%%%%%%%%%%%
%%%%%%%%%%%%%%%%%%%%%%%%%%%%%%%%%%%%%%%%%%%%

To make use of the sub- and supersolutions constructed in Lemmas \ref{le:barrier_s} and \ref{le:barrier_s2} we will rely on the following simple lemma,
which is immediate in the setting of viscosity solutions.

\begin{lemma}\label{le:comp-weak}
Assume \eqref{eq:deg-ellipt-proper} and let $\Omega$ be a bounded domain, $u \in USC(\overline{\Omega})$ a viscosity subsolution, and $v \in LSC(\overline{\Omega})$ a viscosity supersolution of \eqref{eq:main-nonlinear}, satisfying $u \leq v$ on $\partial \Omega$.
If either $u$ is a classical strict subsolution, or $v$ is a classical strict supersolution,
then $u < v$ in $\Omega$.
\end{lemma}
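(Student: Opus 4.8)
The plan is to argue by contradiction, using the classical function directly as a test function in the viscosity definition and closing the argument with properness \eqref{eq:deg-ellipt-proper}. Suppose the conclusion fails, so there is $x_0 \in \Omega$ with $(u-v)(x_0) \geq 0$. Since $u \in USC(\overline\Omega)$ and $-v \in USC(\overline\Omega)$, the function $u-v$ is upper semicontinuous on the compact set $\overline\Omega$ and attains its maximum $M = \max_{\overline\Omega}(u-v) \geq 0$. Because $u \leq v$ on $\partial\Omega$, we have $u-v \leq 0$ there, so if the maximum were attained only on $\partial\Omega$ we would get $M \leq 0$, hence $M=0$, and then $(u-v)(x_0)=0=M$ shows the maximum is in fact attained at the interior point $x_0$. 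Either way, there is $\bar x \in \Omega$ with $(u-v)(\bar x) = M \geq 0$; in particular $u(\bar x) \geq v(\bar x)$.

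Next I would split into the two cases. If $v$ is a classical strict supersolution, then $v \in C^2(\Omega)$, and since $u-v$ has a local maximum at $\bar x$, the definition of viscosity subsolution applied with $\varphi = v$ gives $F(\bar x, u(\bar x), Dv(\bar x), D^2 v(\bar x)) \leq 0$. On the other hand, strictness of $v$ as a supersolution yields $F(\bar x, v(\bar x), Dv(\bar x), D^2 v(\bar x)) > 0$. Since $u(\bar x) \geq v(\bar x)$ and $F$ is nondecreasing in its second slot by \eqref{eq:deg-ellipt-proper}, we get $F(\bar x, u(\bar x), Dv(\bar x), D^2 v(\bar x)) \geq F(\bar x, v(\bar x), Dv(\bar x), D^2 v(\bar x)) > 0$, contradicting the previous inequality. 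The case in which $u$ is a classical strict subsolution is symmetric: $u \in C^2(\Omega)$, $v-u$ has a local minimum at $\bar x$, so testing the supersolution $v$ with $\varphi = u$ gives $F(\bar x, v(\bar x), Du(\bar x), D^2 u(\bar x)) \geq 0$, while strictness of $u$ gives $F(\bar x, u(\bar x), Du(\bar x), D^2 u(\bar x)) < 0$; monotonicity in the second slot together with $u(\bar x) \geq v(\bar x)$ again produces the contradiction $0 > F(\bar x, u(\bar x), Du(\bar x), D^2 u(\bar x)) \geq F(\bar x, v(\bar x), Du(\bar x), D^2 u(\bar x)) \geq 0$.

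Since both cases are impossible, no interior point with $u \geq v$ exists, i.e. $u < v$ in $\Omega$. There is no serious obstacle here; the only points requiring a little care are the reduction ensuring the relevant maximum (resp. minimum) of $u-v$ is attained at an \emph{interior} point of $\Omega$, and bookkeeping of which function plays the role of the test function in each case. Note that only the monotonicity in the $u$-argument of \eqref{eq:deg-ellipt-proper} is used -- the ordering in the matrix argument is not needed, since $X = Y = D^2$(classical function) there -- and that continuity of $F$ is not invoked, consistent with the remark preceding the lemma.
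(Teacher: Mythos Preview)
Your proof is correct and follows essentially the same approach as the paper: argue by contradiction, locate an interior maximum of $u-v$, use the classical function as a test function in the viscosity definition, and close the contradiction via the monotonicity in \eqref{eq:deg-ellipt-proper}. The paper only writes out the case where $v$ is the classical strict supersolution and leaves the other case implicit, whereas you spell out both cases and are a bit more careful about why the maximum lies in the interior.
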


\noindent
{\bf Proof.}
Let $\Omega$ be a bounded domain,
$u$ a viscosity subsolution, and $v$ be a classical strict supersolution in $\Omega$. Assume that
$u \leq v$ on $\partial \Omega$ and that $u \geq v$ somewhere in $\Omega$.
By USC the function $u-v$ then attains a maximum $\geq 0$ at some point $x_0 \in \Omega$.
Since $v \in C^2(\Omega)$, $u - v$ has a maximum at $x_0$, and $u$ is a viscosity subsolution, it follows by definition that
\begin{align}\label{eq:johej-lemma1}
F(x_0, u(x_0), Dv(x_0), D^2 v(x_0)) \leq 0.
\end{align}
But since $v$ is a classical strict supersolution,
we have
$$
F(x,v(x), Dv(x), D^2v(x)) > 0 \quad \text{whenever} \quad x \in \Omega,
$$
and as $u(x_0) \geq v(x_0)$ it follows from \eqref{eq:deg-ellipt-proper} that
$$
F(x_0, u(x_0), Dv(x_0), D^2 v(x_0)) \geq F(x_0, v(x_0), Dv(x_0), D^2 v(x_0)) > 0.
$$
This contradicts \eqref{eq:johej-lemma1} and hence we have proved the lemma. $\hfill \Box$\\

We now give the proof of our lower estimate, i.e. Theorem \ref{th:lower}.
Armed with Lemmas \ref{le:barrier_s} and \ref{le:comp-weak},
We can roughly follow the lines of \cite[Lemma 3.1]{aikawa} to obtain the Theorem.\\

\noindent
{\bf Proof of Theorem \ref{th:lower}.}
 Take $x \in \Omega \cap B(w, r)$ and let $\eta \in \partial \Omega$ be such that $d(x, \partial\Omega) = |x - \eta|$.
By the interior sphere condition at $\eta$ we can find a point
$\eta^i$ such that $B(\eta^i, r_i) \subset \Omega$ and $\eta \in \partial B(\eta^i, r_i)$.
Now, take the point $\eta^i_{2r}$ which is such that $\eta= \eta^i_{2r} +2r \frac{\eta-\eta_i}{|(\eta-\eta_i)|}$, i.e., $\eta^i_{2r} \in \Omega$ lies on a straight line $\ell$ between $\eta$ and $\eta^i$ on a distance $2r$ from the boundary $\partial \Omega$. Then ${B(\eta^i_{2r}, 2 r)}\subset \Omega$ so ${v}$ is a positive viscosity supersolution in $B(\eta^i_{2r}, 2r)$.  By construction $|x-\eta| < r$ so $|\eta^i_{2r}-x| > r$ and by the interior sphere condition we get that $x$ lies on the line $\ell$ and thus $x \in A :=B(\eta^i_{2r},2r)\setminus \overline{	B(\eta^i_{2r}, r)}$.

Let $\check h$ be a solution of \eqref{eq:ODE} according to the theorem, put $U = \check h(2r - |x-\eta^i_{2r}|)$ and note that then, by \eqref{eq:ass_drift_sub}, Lemma \ref{le:barrier_s} and the boundary values of $\check h$, $U$ is a strict classical subsolution of \eqref{eq:main-nonlinear} in $A$ satisfying
\begin{align*}
U \leq \inf_{\Gamma_{w,r}} v \quad\text{on}\quad \partial B(\eta^i_{2r}, r) \quad \text{and} \quad  U \leq 0 \quad\text{on}\quad\partial B(\eta^i_{2r}, 2r).
\end{align*}
Now ${B(\eta^i_{2r}, r)} \subset \Gamma_{w,r}$ and hence $U \leq v$ on the boundaries of the annulus $A$ which is contained in $\Omega\cap B(w,6r)$ where $v$ is a supersolution, so that the weak comparison principle in Lemma \ref{le:comp-weak} implies $U \leq v$ in $A$.
Summarizing the above construction,
we can conclude that
$$
v(x) \geq U(x) = \check h(2r - |x-\eta^i_{2r}|) = h(d(x,\partial \Omega)),
$$
and the proof is complete.
$\hfill\Box$\\

%%%%%%%%%%%%%%%%%%%%%%%%%%%%%%%%%%%%%%%%%%%%
%%%%%%%%%%%%%%%%%%%%%%%%%%%%%%%%%%%%%%%%%%%%
%%%%%%%%%%%%%%%%%%%%%%%%%%%%%%%%%%%%%%%%%%%%

While the lower estimate (Theorem \ref{th:lower}) followed easily--once the classical subsolution was constructed in Lemma \ref{le:barrier_s})--by applying the argument in \cite[Lemma 3.1]{aikawa}, the upper estimate (Theorem \ref{th:upper}) requires additional arguments. This is due to the potentially vanishing ellipticity and the fact that, in the upper estimate, the barrier intersects the boundary, unlike in the lower estimate.
This is the reason why we allow ellipticity to blow up only at a point in the upper estimate, whereas in the lower estimate, it may blow up along the entire boundary.\\

\noindent
{\bf Proof of Theorem \ref{th:upper}.}
Take $x \in \Omega \cap B(w, r)$ and let $\eta \in \partial \Omega$ be such that $d(x, \partial\Omega) = |x - \eta|$.
By the exterior sphere condition at $\eta$ we can find a point $\eta^e$ such that $B(\eta^e, r_e) \subset\mathbb{R}^n \setminus\Omega$
and $\eta \in \partial B(\eta^e, r_e)$.
Now, take the point $\eta^e_{r}$ which is such that $\eta= \eta^e_{r} + r \frac{\eta-\eta^e}{|(\eta-\eta^e)|}$ (i.e., $\eta^e_r \not \in \Omega$ lies on the straight line $\gamma$ between $\eta$ and $\eta^e$ on a distance $r$ from the boundary $\partial \Omega$)

Let $\hat h$ be a solution of \eqref{eq:ODE2} according to the theorem, put $V = \hat h(|x-\eta^e_{r}|-r)$ and note that then, by \eqref{eq:ass_drift_super}, Lemma \ref{le:barrier_s2} and the boundary values of $\hat h$, $V$ is a strict classical supersolution of \eqref{eq:main-nonlinear} in the domain
$$
A \cap \{x: d(x,w) \geq |x-\eta^e_r| - r\}, \quad \text{where} \quad A = B(\eta^e_r,2r)\setminus \overline{B(\eta^e_r,r)}.
$$
Moreover, $V$ satisfies
\begin{align*}
V \geq 0 \quad\text{on}\quad \partial B(\eta^e_{r}, r) \quad \text{and} \quad  V \geq \sup_{B(w,6r)\cap \Omega} u \quad\text{on}\quad\partial B(\eta^e_{2r}, 2r).
\end{align*}
We now intend to use the comparison principle in Lemma \ref{le:comp-weak} and $V$ to show that $u(x) \leq V(x)$.
The theorem then follows since $x$ was arbitrary and by construction $u(x) \leq V(x) = \hat h(|x-\eta^e_{r}|-r) = \hat h(d(x,\partial \Omega))$.

By construction, $u \leq V$ on $\partial \left( A \cap \Omega \right)$, in which $u$ is a subsolution, but $V$ is a supersolution only in the set $A \cap \{x: d(x,w) \geq |x-\eta^e_r| - r\}$.
%Consider the set $\Gamma = \partial \left(A \cap \{x: d(x,w) \geq |x-\eta^e_r| - r\}\right)$.
If
\begin{align}\label{eq:tjohejja}
V(y) \geq u(y) \quad \text{on}\quad  \Gamma = \partial \left(A \cap \{x: d(x,w) \geq |x-\eta^e_r| - r\}\right),
\end{align}
then comparison in $A \cap \{x: d(x,w) \geq |x-\eta^e_r| - r\} \cap \Omega$ gives us the desired result.
Hence, it remains to show \eqref{eq:tjohejja} and to do so we proceed as follows.
By the exterior ball condition at $w \in \partial \Omega$, there exist $\eta^w_r$ such that $B(\eta^w_r,r) \subset \mathbf{R}^n\setminus \Omega$ and $w \in \partial B(\eta^w_r,r)$.
Using Lemma \ref{le:barrier_s2} once more we see that
$V^w = \hat h(|x-\eta^w_{r}|-r)$ is a strict classical supersolution of \eqref{eq:main-nonlinear} in the annulus  $A^w = B(\eta^w_r,2r)\setminus \overline{B(\eta^w_r,r)}$, which follows since $d(z,w) \geq |\eta^w_r-z|-r$ always holds in $A^w$. Clearly
\begin{align*}
V^w \geq 0 \quad\text{on}\quad \partial B(\eta^w_{r}, r) \quad \text{and} \quad  V^w \geq \sup_{B(w,6r)\cap \Omega} u \quad\text{on}\quad\partial B(\eta^w_{2r}, 2r).
\end{align*}
For any $y\in \Gamma$ it holds, by definition of $\Gamma$, that
$d(y,w) = |\eta^e_r - y| - r$. Thus $d(y,w) + r = |\eta^e_r - y|$, and by the triangle inequality $d(y,\eta^w_r) \leq d(y,w) + r$. Therefore $d(y,\eta^w_r) \leq d(y,\eta^e_r)$ and it follows by the construction of $V$ and $V^w$ that we therefore have
$V(y) \geq V^w(y)$.
Now, $V^w \geq u$ on $\partial \left( \Omega \cap A^w\right)$ and by the comparison principle in Lemma \ref{le:comp-weak} this holds also in the set.
Therefore $V(y) \geq V^w(y) \geq u(y)$ and we have established  \eqref{eq:tjohejja}.
The proof is complete. $\hfill\Box$

%%%%%%%%%%%%%%%%%%%%%%%%%%%%%%%%%%%%%%%%%%%%
%%%%%%%%%%%%%%%%%%%%%%%%%%%%%%%%%%%%%%%%%%%%
%%%%%%%%%%%%%%%%%%%%%%%%%%%%%%%%%%%%%%%%%%%%

\subsection{Examples}
\label{sec:examples}

In this section we apply Theorem \ref{th:lower}, Theorem \ref{th:upper} and Corollary \ref{cor:BHI} to some classes of PDEs for which we easily can find solutions of the auxiliary ODIs in \eqref{eq:ODE} and \eqref{eq:ODE2} and thereby conclude upper and lower decay estimates and boundary Harnack inequalities.
We begin with going through the simple case of zero lower order terms in detail, first with uniform ellipticity and then when ellipticity possibly blows up near the boundary.

%\bigskip

\subsubsection{The case $\Phi^- \equiv \Phi^+ \equiv 0$ with uniform ellipticity}
\label{sec:example_0}

%\begin{example}
%...
%\end{example}
%
%\noindent
%{\bf Example 1:  $\Phi^- \equiv \Phi^+ \equiv 0$.}
In this case we consider solutions of \eqref{eq:main-nonlinear}
with $F$ satisfying, for some constants $0 < \lambda \leq \Lambda$,
\begin{align*}
\mathcal{P}^-_{\lambda,\Lambda}(X)
\leq F(x,s,p,X) \leq
\mathcal{P}^+_{\lambda,\Lambda}(X),
\end{align*}
whenever $x\in \mathbb{R}^n, s \in \mathbb{R}_+, p \in \mathbb{R}^n, X \in \mathbb{S}^n$.
When $\lambda = 1 = \Lambda$ we have only the Laplace equation and then it is well known that solutions decay at the same rate as the distance function in $C^{1,1}$-domains.
We now intend to derive the similar estimates in the above more general setting, and to do so it is, according to our Corollary \ref{cor:BHI}, %Theorem \ref{th:lower}, Theorem \ref{th:upper} %\eqref{eq:ODE} and \eqref{eq:ODE2}
enough to find classical solutions of the ODIs \eqref{eq:ODE} and \eqref{eq:ODE2} and prove the corresponding estimates for those solutions.

Let $\Omega,r_0, w, r, u_1, u_2$ and $\Gamma_{w,r}$ be as in Corollary \ref{cor:BHI} and observe that, with $K = \frac{\Lambda n}{\lambda r}$,
\begin{align}\label{eq:first_case_solutions}
\check h(t) = m \frac{e^{Kt} - 1}{e^{Kr} - 1} \quad \text{and} \quad \hat h(t) = M \frac{1 - e^{-Kt}}{1 - e^{-Kr}}
\end{align}
are increasing solutions of the ODIs \eqref{eq:ODE} and \eqref{eq:ODE2}, respectively, with $\check h(0) = 0 = \hat h(0)$, and $\check h(r) = m := \inf_{\Gamma_{w,r}} u_i$ and $\hat h(r) = M := \sup_{B(w,6r)\cap \Omega} u_i$, i = 1,2.
Moreover, we have the estimates
$$
\frac{ \Lambda n}{\lambda \left( e^{\frac{\Lambda n}{\lambda}} - 1\right)} \frac{m t}{r}  =  \frac{m K t}{e^{Kr} - 1} \leq \check  h(t) \leq
%\quad \text{and} \quad
\hat h(t) \leq \frac{M K t}{1 -e^{-Kr}}  = \frac{\Lambda n}{\lambda\left(1 - e^{-\frac{\Lambda n}{\lambda}}\right)} \frac{M t}{r}
$$
for $t\in [0,r]$, so that Corollary \ref{cor:BHI} gives
%with $d_{\partial\Omega} = d(x, \partial\Omega)$,
%
\begin{align}\label{eq:khabooomex1}
\frac{ \Lambda n}{\lambda \left( e^{\frac{\Lambda n}{\lambda}} - 1\right)} \frac{m d(x, \partial\Omega) }{r}
\leq u_i(x) \leq
\frac{\Lambda n}{\lambda\left(1 - e^{-\frac{\Lambda n}{\lambda}}\right)} \frac{M d(x, \partial\Omega) }{r}\quad
%\text{for} \quad x \in \Omega \cap B(w, r), \quad i = 1,2.
\end{align}
for $x \in \Omega \cap B(w, r),  i = 1,2$.
Furthermore,
$$
1 \leq \frac{\hat h(t)}{\check h(t)} \leq \frac{M}{m} \frac{e^{\frac{\Lambda n}{\lambda}} - 1}{1 - e^{-\frac{\Lambda n}{\lambda}}} = \frac{M}{m} e^{\frac{\Lambda n}{\lambda}} \quad \text{whenever} \quad t \in (0,r)
$$
and hence we also derive the boundary Harnack inequality
$$
e^{-\frac{\Lambda n}{\lambda}} \frac{m}{M}
\leq \frac{u_1(x)}{u_2(x)} \leq
\frac{M}{m}  e^{\frac{\Lambda n}{\lambda}}
\quad \text{whenever} \quad x \in \Omega \cap B(w, r).
$$
%
%\komN{That $m/M$ and $M/m$ factors out from constants is due to the homogeneity of the Laplace equation. Mention Carleson Harnack etc to get standard estimates?}
We remark that the $p$-Laplace equation can be handled by the standard ellipticity case considered here; see e.g. the related works \cite{AJ17,L22,LOT20} for details.

%%%%%%%%%%%%%%%%%
%%%%%%%%%%%%%%%%%
%%%%%%%%%%%%%%%%%

\subsubsection{The case $\Phi^- \equiv \Phi^+ \equiv 0$ with vanishing ellipticity}
\label{sec:example_0_vanish}

We continue the above example but now we let the ellipticity vanish by setting $\lambda(t) = t^a$ (or, as $\Phi^+ = \Phi^-= 0$, it is equivalent to $\Lambda(t) = t^{-a}$) for some $a \in (0,1)$.
According to \eqref{eq:ass_drift_sub} we thus allow for equations in which ellipticity vanish everywhere on the boundary ($t = d(x,\partial \Omega)$), or just at one point ($t = d(x,w)$), when deriving a lower estimate.
For an upper estimate we need to assume \eqref{eq:ass_drift_super}, and since $\lambda(t) = t^a$ is increasing, we may only allow for ellipticity to vanish at one point ($t = d(x,w)$).

With the help of Corollary \ref{cor:BHI}, %Theorem \ref{th:lower} and Theorem \ref{th:upper} %\eqref{eq:ODE} and \eqref{eq:ODE2}
it is sufficient to prove the intended results for classical solutions $\check h$ and $\hat h$ of
\begin{align*}
\check h'' =  \frac{n \Lambda}{r t^a} \check h', \qquad \hat h'' = - \frac{n \Lambda}{r t^a} \hat h', \quad 0 < t < r.
\end{align*}
If $0 < a < 1$ then
$$
\log{\check h'(t)} - \log{\check h'(0)} = -K\frac{t^{1-a}}{1-a}
$$
$$
\check h'(t)  = \check h'(0) e^{-K\frac{t^{1-a}}{1-a}}
$$
$$
\check h(t)  = \check h'(0) \int_0^t e^{-K\frac{s^{1-a}}{1-a}} ds +  \check h(0)
$$
so that with $\check h(0) = 0 = \hat h(0)$, $i = 1,2$, we have
$$
\check h(t) = \check h'(0) \int_0^t e^{\frac{K}{1-a} s^{1-a}} ds, \qquad \hat h(t) = \hat h'(0) \int_0^t e^{-\frac{K}{1-a} s^{1-a}} ds,
$$
and with $\check h(r) = m := \inf_{\Gamma_{w,r}}u_i$ and $\hat h(r)  = M := \sup_{B(w,6r)\cap \Omega} u_i$, $i = 1,2$, we obtain the increasing solutions
$$
\check h(t) = m \;\frac{\int_0^t e^{\frac{K}{1-a} s^{1-a}} ds}{\int_0^r e^{\frac{K}{1-a} s^{1-a}} ds},\qquad  \hat h(t) = M \;\frac{\int_0^t e^{-\frac{K}{1-a} s^{1-a}} ds}{\int_0^r e^{-\frac{K}{1-a} s^{1-a}} ds}.
$$
Estimating the integrals gives
$$
e^{-\frac{\Lambda n}{(1-a)r^a}} \frac{m t}{r} \leq \frac{m t}{\int_0^r e^{\frac{K}{1-a} s^{1-a}} ds} \leq
\check h(t)
%\quad \text{and} \quad
\leq
\hat h(t) \leq
\frac{M t}{\int_0^r e^{-\frac{K}{1-a} s^{1-a}} ds} \leq
e^{\frac{\Lambda n}{(1-a)r^a}} \frac{M t}{r}
$$
for $t\in [0,r]$, so that Corollary \ref{cor:BHI} gives
\begin{align*}
 e^{-\frac{\Lambda n}{(1-a)r^a}} \frac{m d(x, \partial\Omega) }{r}
\leq u_i(x) \leq
 e^{\frac{\Lambda n}{(1-a)r^a}} \frac{M d(x, \partial\Omega) }{r}\quad
%\text{for} \quad x \in \Omega \cap B(w, r), \quad i = 1,2.
\end{align*}
for $x \in \Omega \cap B(w, r),  i = 1,2$.
Furthermore,
$$
1 \leq \frac{\hat h(t)}{\check h(t)} \leq \frac{M}{m} e^{\frac{2 \Lambda n}{(1-a) r^a}} \quad \text{whenever} \quad t \in (0,r)
$$
and hence we again derive the boundary Harnack inequality
$$
 e^{-\frac{2 \Lambda n}{(1-a) r^a}} \frac{m}{M}
\leq \frac{u_1(x)}{u_2(x)} \leq
\frac{M}{m} e^{\frac{2 \Lambda n}{(1-a) r^a}}
\quad \text{whenever} \quad x \in \Omega \cap B(w, r).
$$
The fact that constants blow up as $r \to 0$ is not surprising, since then the ellipticity vanishes.

%%%%%%%%%%%%%%%%%
%%%%%%%%%%%%%%%%%
%%%%%%%%%%%%%%%%%

\subsubsection{The case $\Phi^- \equiv -1, \Phi^+ \equiv 1$ with uniform ellipticity}
\label{sec:example_-1,1}

%\bigskip
%
%\noindent
%{\bf Example 2: Uniform ellipticity and $\Phi^- \equiv \Phi^+ \equiv 1$.}
In this case we consider solutions of \eqref{eq:main-nonlinear}
with $F$ satisfying
\begin{align}\label{eq:ex3equa}
-1 + \mathcal{P}^-_{\lambda,\Lambda}(X)
\leq F(x,s,p,X) \leq
1 + \mathcal{P}^+_{\lambda,\Lambda}(X)
\end{align}
whenever $x\in \mathbb{R}^n, s \in \mathbb{R}_+, p \in \mathbb{R}^n, X \in \mathbb{S}^n$, for some constants $0 < \lambda \leq \Lambda$.
%The choice of 1 as the upper and lower bound can easily be switched to different positive constants without changing the structure of what follows.
In what follows, let $\Omega,r_0, w, r, u_1, u_2$ and $\Gamma_{w,r}$ be as in Corollary \ref{cor:BHI} and put $K = \frac{\Lambda n}{\lambda r}$. %, $m = \inf_{\Gamma_{w,r}} u_i$, $M = \sup_{B(w,6r)\cap \Omega} u_i, i = 1,2$.
According to our machinery, it suffices to prove the intended results for solutions of the ODIs %\eqref{eq:ODE} and \eqref{eq:ODE2} which yield
\begin{align*}
\check h'' = \frac1\lambda + K \check h', \quad  0 < t < r, \quad \check h(0) = 0, \quad \check h(r) = m,
\end{align*}
and
\begin{align*}
\hat h'' = -\frac1\lambda - K \hat h', \quad  0 < t < r, \quad \hat h(0) = 0, \quad \hat h(r) = M.
\end{align*}
Standard calculations give
\begin{align*}
\check h(t) = \left( m + \frac{r^2}{n\Lambda}\right) \frac{e^{K t} - 1}{e^{Kr}  - 1} - \frac{r}{n \Lambda} t, \quad
\hat h(t) = \left( M + \frac{r^2}{n\Lambda}\right) \frac{1 - e^{-K t}}{1 - e^{-Kr}} - \frac{r}{n \Lambda} t,
\end{align*}
and concerning monotonicity, we have
\begin{align}\label{eq:jomenjahaomuärstortnoginnisåklart}
\check h'(t) > 0\Leftrightarrow\frac{m}{r^2} > \alpha, \quad
\hat h'(t) > 0\Leftrightarrow\frac{M}{r^2} > \alpha :=
%\quad \text{where} \quad
%\alpha  =
\frac{\lambda}{(\Lambda n)^2} \left(e^{\frac{n\Lambda}{\lambda}} - 1\right) - \frac{1}{\Lambda n},
\end{align}
whenever $t \in [0,r]$.
Since we have the freedom to choose any $M \geq \sup_{B(w,6r)\cap \Omega} u_i$, $i = 1,2$,
the condition on $\hat h$ in \eqref{eq:jomenjahaomuärstortnoginnisåklart} will never violate the existence of an upper bound.
However, we are obliged to $m \leq \inf_{\Gamma_{w,r}} u_i$, $i = 1,2$, and therefore we obtain a lower bound only if $\frac{\inf_{\Gamma_{w,r}}}{r^2} > \alpha$.
To summarize, Theorem \ref{th:lower} gives the following, for $i = 1,2$:
\begin{align*}%\label{}
\text{If} \quad \frac{\inf_{\Gamma_{w,r}}}{r^2} > \alpha \quad \text{then} \quad \check h'(0) d(x, \partial\Omega) \leq  u_i(x)
\quad \text{whenever} \quad x \in \Omega \cap B(w, r).
\end{align*}
%
%for $i = 1,2$.
Moreover, Theorem \ref{th:upper} gives, for $i = 1,2$,
\begin{align*}%\label{}
u_i(x) \lesssim d(x, \partial\Omega)
\quad \text{whenever} \quad x \in \Omega \cap B(w, r), %i = 1,2,
\end{align*}
and, furthermore, since
$$
\text{if} \quad \frac{\inf_{\Gamma_{w,r}}}{r^2} > \alpha \quad \text{then} \quad 1 \leq \frac{\hat h(t)}{\check h(t)} \lesssim 1 \quad \text{whenever} \quad t \in (0,r),
$$
Corollary \ref{cor:BHI} implies the following boundary Harnack inequality:
$$
\text{If} \quad \frac{\inf_{\Gamma_{w,r}}}{r^2} > \alpha \quad \text{then} \quad\frac{u_1(x)}{u_2(x)} \approx 1
\quad \text{whenever} \quad x \in \Omega \cap B(w, r).
$$

We remark that the extra condition on the lower estimate
can easily be justified by considering the classical problem
\begin{align*}%\label{eq:Laplace1}
\Delta u = 1 \quad \text{in} \quad \{x \in \mathbf{R}^n : x_n > 0\} \quad \text{with} \quad u(x',0) = 0.
\end{align*}
%
%using the notation $x = (x',x_n)$.
Since for any $r > 0$ the function
\begin{align*}
    u(x) = \frac{x_n^2}2 + \left(\frac{u(r)}{r} - \frac{r}2\right) x_n
\end{align*}
solves the problem, the decay estimates should switch behavior as the factor of $x_n$ switches sign.
In particular, if $\frac{u(r)}{r^2} > \frac{1}{2}$ then $u(x) \approx x_n$ near the boundary $x_n = 0$,
but otherwise such lower estimate obviously fails.
We also remark that -1 and 1 as the upper and lower bound in \eqref{eq:ex3equa} can be switched to constants without changing the nature of the above conclusions,
and for the general class of equations considered in this example, the theorems on lower decay estimates in \cite[Section 5]{LOT20} will not apply due to assumption $(\phi_A)$ %and $(\phi_B)$
in that paper. %To the authors knowledge, the lower estimates derived in this example are new.

%%%%%%%%%%%%%%%%%
%%%%%%%%%%%%%%%%%
%%%%%%%%%%%%%%%%%

\subsubsection{The case $\Phi^-(t,s,|p|) = - |p|^k$, $k\geq 1$, with uniform ellipticity}
\label{sec:example_pk}

%\bigskip

%\noindent
%{\bf Example 4: $\Phi^- = - s^k$}\\

In this case we consider solutions of \eqref{eq:main-nonlinear}
with $F$ satisfying
\begin{align*}
-|p|^k + \mathcal{P}^-_{\lambda,\Lambda}(X)
\leq F(x,s,p,X) %\leq
%-|p|^k + \mathcal{P}^+_{\lambda,\Lambda}(X)
\end{align*}
whenever $x\in \mathbb{R}^n, s \in \mathbb{R}_+, p \in \mathbb{R}^n, X \in \mathbb{S}^n$, for some constants $0 < \lambda \leq \Lambda$.
Since $F$ is unbounded from above a lower estimate is not possible.
If $k\leq 2$ then we know from \cite{LOT20} that an upper estimate $u \lesssim d(x,\partial\Omega)$ holds in general, but otherwise, a counterexample in \cite[Section 5]{LOT20} disproves an upper estimate in the general case.
Thus, it remains to investigate if there are situations in which it is possible to derive an upper estimate also for $k > 2$.
Let $\Omega,r_e, w, r$ and $u$ be as in Theorem  \ref{th:upper}, introduce $\hat f = \hat h'$ and $K = \frac{n\Lambda}{r\lambda}$.
To prove an upper estimate, we take off from  \eqref{eq:ODE2} and solve the initial value problem
$$
\frac{d\hat f}{dt} = - \frac{\hat f^k}{\lambda} - K \hat f, \quad t \in (0,r)  \quad \text{with} \quad \hat f(0) = \nu>0,
$$
yielding the solution (see \cite[Section 3.2]{L22} for a derivation),
%
%$$
%\hat f(t) =
%\left(\lambda K\right)^{\frac1{k-1}}\left({e^{(k-1) K t} \left(\frac{\lambda{K}}{\nu^{k-1}} + 1\right) - 1} \right)^\frac{1}{1-k}.
%$$
%
$$
\hat f_\nu(t) =  \left(
\frac{\lambda K}{e^{(k-1) K t} \left(\frac{\lambda{K}}{\nu^{k-1}} + 1\right) - 1}\right)^\frac{1}{k-1}.
$$
Now, $\hat f_\nu$ is decreasing so that for any $\nu > 0$,
$$
\hat h_\nu(t) = \int_0^t \hat f_\nu(s) ds
$$
is concave and, furthermore, $\hat h_\nu(r)$ is increasing in $\nu$.
Thus, as long as there is $\nu > 0$ so that $\hat h_\nu(r) \geq \sup_{B(w,6r)\cap \Omega} u$ we obtain an upper estimate. This means that we obtain an estimate only when the subsolution $u$ is not too large a small distance from the boundary. In particular, we have derived the following:
\begin{align}\label{eq:upperboundexsk1}
\text{If} \quad \hat h_\nu(r)\, \geq \sup_{B(w,6r)\cap \Omega} u \quad \text{then} \quad
u(x) \leq \hat h_\nu(d(x,\partial \Omega)) \leq \nu d(x,\partial \Omega) %\quad \text{whenever} \quad x \in \Omega \cap B(w,r).
\end{align}
whenever $x \in \Omega \cap B(w,r)$.

Next, we observe that $\hat f_\nu$ and $\hat h_\nu$ are maximized when $\nu \to \infty$, and
$$
\hat f_\nu(t) \to \hat f_\infty(t) =
\left(\frac{\lambda K}{e^{(k-1) K t}  - 1}\right)^\frac{1}{k-1}, %\quad \hat h_\nu(t) \to \hat h_\infty(t),
$$
and  for $0 < \epsilon \leq t \leq r$ we have
$$
\left( \frac{\lambda K r}{e^{(k-1) K r} - 1}  \right)^\frac{1}{k-1}
%\check c
\int_\epsilon^t {s^\frac{1}{1-k}}ds \leq \hat h_\infty(t) \leq
%\hat c
\left(\frac{\lambda}{k-1} \right)^\frac{1}{k-1}
\int_\epsilon^t {s^\frac{1}{1-k}} ds. %\quad \text{with} \quad \check c = \left( \frac{e^{(k-1) K r} - 1}{\lambda K r}  \right)^\frac{1}{1-k}, \quad %\text{and} \quad
%\hat c = \left(\frac{k-1}{\lambda} \right)^\frac{1}{1-k}.
$$
Thus, for $0 \leq t \leq r$,
$$
\check c \,t^\frac{k-2}{k-1} \leq \hat h_\infty(t) \leq \hat c \,t^\frac{k-2}{k-1},  %\quad \text{with} \quad \check c = \frac{k-1}{k-2}\left( \frac{e^{(k-1) K r} - 1}{\lambda K r}  \right)^\frac{1}{1-k}, \quad %\text{and} \quad
%\hat c = \frac{k-1}{k-2}\left(\frac{k-1}{\lambda} \right)^\frac{1}{1-k}.
$$
with
\begin{align}\label{eq:constantsinexamplesk}
\check c = \frac{k-1}{k-2}\left( \frac{n\Lambda}{e^{(k-1) \frac{n\Lambda}{\lambda} } - 1}  \right)^\frac{1}{k-1}
%= \frac{k-1}{k-2}\left( \frac{e^{(k-1) K r} - 1}{\lambda K r}  \right)^\frac{1}{1-k}
, \quad %\text{and} \quad
\hat c = \frac{k-1}{k-2}\left(\frac{\lambda}{k-1} \right)^\frac{1}{k-1}.
\end{align}
Therefore, we obtain the following upper estimate:
\begin{align}\label{eq:upperboundexsk2}
\text{If} \quad \check c \,d(x,\partial \Omega)^\frac{k-2}{k-1}\, \geq \sup_{B(w,6r)\cap \Omega} u \quad \text{then} \quad
u \leq \hat c\, d(x,\partial \Omega)^\frac{k-2}{k-1} %\quad \text{whenever} \quad x \in \Omega \cap B(w,r).
\end{align}
whenever $x \in \Omega \cap B(w,r)$ and where $\check c$ and $\hat c$ are as in \eqref{eq:constantsinexamplesk}.
In conclusion, estimates \eqref{eq:upperboundexsk1}  reveals that, when $\sup_{B(w,6r)\cap \Omega} u$ is small, then  subsolutions decay no slower than proportional to the distance function, but the constant of proportion ($\nu$) explodes as $ \sup_{B(w,6r)\cap \Omega} u$ increases.
Estimate \eqref{eq:upperboundexsk2} reveals a power-law upper bound with constant independent of $\sup_{B(w,6r)\cap \Omega} u$.

To remark on the sharpness of these results,
let $r > 0$ and $k > 2$ be given
and consider the following boundary value problem
\begin{align}\label{eq:oneDk}
u''(x) = - (u'(x))^k, \quad x \in (0,r), \quad u(0) = 0, \quad u(r) = M > 0.
\end{align}
%
%and suppose that we wish to find lower estimates of (viscosity) supersolution $v$, and upper estimates of a (viscosity) subsolution $u$, of equation \eqref{eq:oneDk},
%satisfying $u(0)\leq 0 \leq v(0)$ and $u(r) \leq M \leq v(r)$.
Standard calculations show that the function
$$
w_\gamma(x) =  \frac{(k-1)^\frac{k-2}{k-1}}{k-2} \left( \left(x + \gamma \right)^\frac{k-2}{k-1} - \gamma^\frac{k-2}{k-1}\right)
$$
is a classical solution,
where $\gamma \geq 0$ is to be determined via $u(r) = M$.
%By the weak comparison principle in Lemma \ref{le:comp-weak} we conclude $w_\gamma < v$ on $[0,r]$ and then concavity immediately implies the lower estimate $\frac{M x}{r} \leq v(x)$ on $[0, r]$.
We observe that %This reveals the following.
when $x$ approaches the boundary at $x = 0$, then the concave solution $w_\gamma$ decays proportionally to $x$, but with constant exploding as $\gamma \to 0$. This is in line with estimate \eqref{eq:upperboundexsk1}.
Moreover, when $w_\gamma$ is maximized for $\gamma = 0$ we obtain the power-law decaying solution with
exponent $\frac{k-2}{k-1}$, which agrees with the bound in \eqref{eq:upperboundexsk2}.

%%%%%%%%%%%%%%%%%%%%%%%%%%%%%%%%%%%%%%%%%%%%
%%%%%%%%%%%%%%%%%%%%%%%%%%%%%%%%%%%%%%%%%%%%
%%%%%%%%%%%%%%%%%%%%%%%%%%%%%%%%%%%%%%%%%%%%
%%%%%%%%%%%%%%%%%%%%%%%%%%%%%%%%%%%%%%%%%%%%
%%%%%%%%%%%%%%%%%%%%%%%%%%%%%%%%%%%%%%%%%%%%
%%%%%%%%%%%%%%%%%%%%%%%%%%%%%%%%%%%%%%%%%%%%

\section{H\"older continuity of $u/v$ near the boundary}
\label{sec:holder}

\setcounter{theorem}{0}
\setcounter{equation}{0}

In this section, we study the conditions under which the quotient $u/v$, for which we established bounds in the previous section, is also H\"older continuous near the boundary.
We first demonstrate, in Section \ref{sec:basic}, that global $C^{1,\alpha}$-estimates combined with suitable boundary growth estimates yield the desired continuity.
Secondly, we apply the results to a class of fully nonlinear PDEs for which global $C^{1,\alpha}$ is available (Section \ref{sec:fully_nonlinear}). In Section \ref{sec:p(x)} and Section \ref{sec:inf-lap} we consider reflection arguments and local $C^{1,\alpha}$-estimates to obtain the similar estimates for $p(x)$-harmonic functions and for planar infinity-harmonic functions near locally flat boundaries.
Finally, in Section \ref{sec:applications-of-u/v-reg} we derive some corollaries of $u_1/u_2 \in C^{\alpha}$-estimates in unbounded domains.

\subsection{Basic lemmas}
\label{sec:basic}

We begin by some basic lemmas for a differentiable function $f$ vanishing on the boundary of a domain satisfying the interior ball condition.
By differentiability of the distance function $d(x,\Omega)$ near the boundary, the quotient $f/d(x,\Omega)$ becomes differentiable. The following Lemma shows that if the gradient of $f$ is H\"older continuous, then so is the quotient $f/d(x,\Omega)$.

\begin{lemma}\label{le:first}
Let $\Omega$ satisfy the interior ball condition with radius $r_0$, $r \in (0,r_0]$, $z \in \partial \Omega$, $f \in C^{1,\alpha_K}$ so that
\begin{align}\label{eq:C1_alpha}
|\nabla f(x) - \nabla f(y) | \leq  c_K |x-y|^{\alpha_K}
\end{align}
whenever $x,y \in B(z,r)\cap \Omega$, and $f = 0$ on $B(z,r)\cap\partial \Omega$.
Then it holds that
$$
\left|\frac{f(x)}{d(x,\Omega)} - \frac{f(y)}{d(y,\Omega)}\right| \leq  8 c_K |x - y|^{\alpha_K},
$$
whenever $x,y \in B(z,r/c)\cap \Omega$, where $c$ depends only on $r_0$.
\end{lemma}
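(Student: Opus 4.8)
The plan is to exploit the fact that near a smooth boundary the distance function $d(\cdot,\Omega)$ is itself $C^{1,1}$, so that $f/d$ can be compared to a ratio of two $C^{1,\alpha_K}$-type quantities. Concretely, fix $z\in\partial\Omega$ and a point $x\in B(z,r/c)\cap\Omega$; let $\bar x\in\partial\Omega$ be the (unique, for $c$ large enough depending on $r_0$) nearest boundary point, so $d(x,\Omega)=|x-\bar x|$ and $x-\bar x = d(x,\Omega)\,\nu(\bar x)$ where $\nu(\bar x)$ is the inward unit normal. Since $f$ vanishes on $\partial\Omega\cap B(z,r)$, the fundamental theorem of calculus along the segment from $\bar x$ to $x$ gives
\begin{align*}
f(x) = \int_0^{d(x,\Omega)} \langle \nabla f(\bar x + s\,\nu(\bar x)),\, \nu(\bar x)\rangle\, ds,
\end{align*}
and dividing by $d(x,\Omega)$ exhibits $f(x)/d(x,\Omega)$ as the average of $s\mapsto \langle\nabla f(\bar x+s\nu(\bar x)),\nu(\bar x)\rangle$ over $[0,d(x,\Omega)]$. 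In particular $f(x)/d(x,\Omega)$ extends continuously to $\partial\Omega$ with boundary value $\langle\nabla f(\bar x),\nu(\bar x)\rangle$, and for any interior point one has $|f(x)/d(x,\Omega) - \langle\nabla f(\bar x),\nu(\bar x)\rangle| \le c_K\, d(x,\Omega)^{\alpha_K}$ directly from \eqref{eq:C1_alpha}.

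**Key steps.** First I would record the geometric facts about the nearest-point projection: for $x$ within distance $\lesssim r_0$ of $\partial\Omega\cap B(z,r)$ the projection $\bar x$ is well defined, $1$-Lipschitz-ish (more precisely $|\bar x - \bar y|\le C(r_0)|x-y|$), and $|\nu(\bar x)-\nu(\bar y)|\le C(r_0)|\bar x-\bar y|$ because $\partial\Omega$ is $C^{1,1}$; also $|d(x,\Omega)-d(y,\Omega)|\le|x-y|$. Choosing $c=c(r_0)$ large guarantees all relevant segments $\bar x+s\nu(\bar x)$, $s\in[0,d(x,\Omega)]$, stay inside $B(z,r)\cap\Omega$ so that \eqref{eq:C1_alpha} applies. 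Second, I would introduce the auxiliary quantity $g(x):=\langle\nabla f(\bar x),\nu(\bar x)\rangle$ and split
\begin{align*}
\left|\frac{f(x)}{d(x,\Omega)}-\frac{f(y)}{d(y,\Omega)}\right| \le \left|\frac{f(x)}{d(x,\Omega)}-g(x)\right| + |g(x)-g(y)| + \left|g(y)-\frac{f(y)}{d(y,\Omega)}\right|.
\end{align*}
The outer two terms are each $\le c_K\,d(x,\Omega)^{\alpha_K}$ resp. $\le c_K\,d(y,\Omega)^{\alpha_K}$ by the averaging identity and \eqref{eq:C1_alpha}; if either $d(x,\Omega)$ or $d(y,\Omega)$ exceeds, say, $|x-y|$ I would instead keep the corresponding term and bound it by $c_K|x-y|^{\alpha_K}$ after noting the average differs from $g$ by something controlled on the scale $\min(d(x,\Omega),d(y,\Omega))$ — but the clean route is a dichotomy: if $d(x,\Omega)\le 2|x-y|$ (and hence $d(y,\Omega)\le 3|x-y|$) the outer terms are directly $\lesssim c_K|x-y|^{\alpha_K}$; if $d(x,\Omega)>2|x-y|$ then $x$ and $y$ lie in the region where $f/d$ is smooth and one estimates its gradient by $C^{1,\alpha_K}$-regularity of $f$ together with $C^{1,1}$-regularity of $d$. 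For the middle term, $|g(x)-g(y)| \le |\nabla f(\bar x)-\nabla f(\bar y)|\,|\nu(\bar x)| + |\nabla f(\bar y)|\,|\nu(\bar x)-\nu(\bar y)|$; the first piece is $\le c_K|\bar x-\bar y|^{\alpha_K}\le c_K (C(r_0)|x-y|)^{\alpha_K}$, and the second is $\le \|\nabla f\|_\infty\,C(r_0)|\bar x-\bar y|$, which requires a bound on $\|\nabla f\|_{L^\infty(B(z,r)\cap\Omega)}$; since $f$ vanishes on the boundary, $\|\nabla f\|_\infty$ is controlled by $c_K r^{\alpha_K}$ plus the boundary normal derivative, and on the flat scale this again folds into a constant times $c_K|x-y|^{\alpha_K}$ after absorbing powers of $r$. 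Collecting the three contributions and tracking constants yields the stated bound $8c_K|x-y|^{\alpha_K}$ (the numeral $8$ being the outcome of the bookkeeping once $c=c(r_0)$ is fixed appropriately).

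**Main obstacle.** The essential difficulty is not the averaging identity — that is elementary once one believes $f(\bar x)=0$ and the segment stays in the domain — but rather the uniform control of the nearest-point projection and of $\|\nabla f\|_\infty$ purely in terms of $r_0$ and $c_K$, and making the dichotomy on $d(x,\Omega)$ versus $|x-y|$ line up so that no $|x-y|^{\alpha_K}$ is ever divided by a small $d$. I expect the delicate bookkeeping to be showing that in the ``far from boundary'' regime $d(x,\Omega)>2|x-y|$ one genuinely gets a Hölder (not merely Lipschitz-with-bad-constant) bound for $\nabla(f/d)$ that is consistent across the two regimes; this is where one uses that $d\in C^{1,1}$ gives $|\nabla d|=1$ and $|\nabla d(x)-\nabla d(y)|\le C(r_0)|x-y|$, so $\nabla(f/d) = \nabla f/d - (f/d^2)\nabla d$ is estimated by combining $|f|\le \|\nabla f\|_\infty d$, $|\nabla f(x)-\nabla f(\bar x)|\le c_K d(x,\Omega)^{\alpha_K}$, and $d>2|x-y|$. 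Everything else is routine; the only genuine inputs are the $C^{1,1}$-regularity of the distance function near a $C^{1,1}$-boundary (standard) and the hypothesis \eqref{eq:C1_alpha}.
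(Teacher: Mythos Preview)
Your decomposition via the boundary value $g(x)=\langle\nabla f(\bar x),\nu(\bar x)\rangle$ is different from the paper's: they insert an auxiliary \emph{point} $w$ on the normal ray through $y$ at height $d(x,\partial\Omega)$ and estimate $|f(x)/d(x)-f(w)/d(w)|$ and $|f(w)/d(w)-f(y)/d(y)|$ separately, the second being along a single ray so that no comparison of normal directions enters. Both routes use the same averaging identity along normal segments, but the paper's choice of intermediate localizes the ``different normals'' issue to one term and handles the radial term by the mean value theorem with a case split on $|y-y_0|$ versus $|w-y_0|/2$.

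Your argument has a genuine gap at the middle term (and the same issue reappears in your far-from-boundary regime). You bound $|\nabla f(\bar y)|\,|\nu(\bar x)-\nu(\bar y)|$ by $\|\nabla f\|_\infty\,C(r_0)|\bar x-\bar y|$ and then claim $\|\nabla f\|_\infty$ ``folds into'' $c_K|x-y|^{\alpha_K}$ because $f$ vanishes on the boundary. This is false: $\|\nabla f\|_\infty$ is \emph{not} controlled by $c_K$ (already in a half-space $f=Mx_n$ gives $c_K=0$ and $\|\nabla f\|_\infty=M$), so the constant you produce depends on $\|\nabla f\|_\infty$, not just on $c_K$ as the lemma asserts. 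The fix is to use that $f=0$ on $\partial\Omega$ forces $\nabla f(\bar y)=g(\bar y)\nu(\bar y)$; then the hypothesis gives $|g(\bar x)\nu(\bar x)-g(\bar y)\nu(\bar y)|\le c_K|\bar x-\bar y|^{\alpha_K}$, and projecting this onto the hyperplane orthogonal to $\nu(\bar x)$ (where $(g(\bar x)-g(\bar y))\nu(\bar x)$ has no component, while $\nu(\bar x)-\nu(\bar y)$ is essentially tangential since $|\nu|\equiv 1$) yields $|g(\bar y)|\,|\nu(\bar x)-\nu(\bar y)|\lesssim c_K|\bar x-\bar y|^{\alpha_K}$ directly, with no appeal to $\|\nabla f\|_\infty$. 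The same cancellation $\nabla f(\bar\xi)-(f(\xi)/d(\xi))\nu(\bar\xi)=O(c_K d(\xi)^{\alpha_K})$ is what makes your gradient estimate for $f/d$ work in the far regime; the route through $|f|\le\|\nabla f\|_\infty d$ that you sketch only gives $|\nabla(f/d)|\le 2\|\nabla f\|_\infty/d$, which does not yield a H\"older bound.
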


\noindent
\begin{proof}
Fix arbitrary $(x,y)\in B(z, r/c) \cap \Omega$ for some $c \geq 1$ to be increase later, %, where $x= (x',x_n)$ and $y=(y', y_n)$.
and assume w.l.o.g. that $d(x,\partial\Omega)>d(y,\partial\Omega)$.
Let $y_0$ be such that $d(y,y_0) = d(y,\partial\Omega)$ and define $w = d(x,\partial\Omega) \frac{y-y_0}{d(y,\partial \Omega)}$.
Uniqueness of $y_0$ follows from the $C^{1,1}$ regularity of the boundary, for $c$ large enough.
We will show the required estimate for
$$
\left|\frac{f(x)}{d(x,\partial\Omega)} - \frac{f(w)}{d(w,\partial\Omega)}\right| \qquad \mbox{and} \qquad \left|\frac{f(w)}{d(w,\partial\Omega)} - \frac{f(y)}{(y,\partial\Omega)}\right|
$$
separately, then the results follows from the triangle inequality.
Starting with the former term, \eqref{eq:C1_alpha} and the interior ball condition of the domain yield

\begin{multline}\label{est1}
\left|\frac{f(x)}{d(x,\partial\Omega)}-\frac{f(w)}{d(w,\partial\Omega)}\right|
\leq\frac{1}{d(x,\partial\Omega)}\int_0^{d(x,\partial\Omega)}\left|\nabla f(t\eta_x)-\nabla f(t\eta_y)\right|\,dt\\[10pt]
\leq \frac{1}{d(x,\partial\Omega)}\int_0^{d(x,\partial\Omega)}c_K\left|t\eta_x-t\eta_y\right|^{\alpha_K}\,dt\\[10pt]
\leq\frac{1}{d(x,\partial\Omega)}\int_0^{d(x,\partial\Omega)}2c_K\left|x-w\right|^{\alpha_K}\,dt
\leq 4c_K\left|x-y\right|^{\alpha_K},
\end{multline}
\bigskip

%Where $K=B(z,R)\cap\Omega$.
for $c$ large enough, depending only on $r_0$.
Here, $c_K$ and $\alpha_K$ are the constants from \eqref{eq:C1_alpha} with $K=\overline {B(z,r/2)\cap\Omega}$, and
in the last line we have used the interior ball condition for $\Omega$ twice, thus we can choose $c$ such that the ball $B(z,r/c)$ is small enough to guarantee $|t\eta_x - t\eta_y| \leq 2|w-x|\leq 4 |x-y|$.

For the second term the mean value theorem of vector calculus and the $C^{1,1}$ regularity of the boundary %\komN{Skip ref here:}(Theorem 3.4,~\cite{Edde})
gives
\begin{equation*}
\left|\frac{f(w)}{d(w,\partial\Omega)}-\frac{f(y)}{d(y,\partial\Omega)}\right|
\leq\left|\max_{t\in L}\frac{\grad f(t)d(t,\partial\Omega)-f(t)\grad d(t,\partial\Omega)}{d(t,\partial\Omega)^2}\cdot(w-y)\right|,
\end{equation*}

\bigskip

\noindent
where $L$ is the straight line between $y$ and $w$.
Since $u(y_0)=0$ %for all $y_0\in\partial\Omega$, also by
the mean value theorem implies the existence of
$\xi\in[y_0,t]$ such that $f(t)=\grad f(\xi)\cdot(t-y_0)$, hence

\begin{align*}
 \left|\frac{f(w)}{d(w,\partial\Omega)}-\frac{f(y)}{d(y,\partial\Omega)}\right|%\\[10pt]
  &\leq\left|\max_{t\in L}\frac{\grad f(t)d(t,y_0)-\left(\grad f(\xi)\cdot(t-y_0)\right)\grad d(t,y_0)}{d(t,y_0)^2}\cdot(w-y)\right|\\
%
% =\left|\max_{t\in L}\frac{\grad f(t)d(t,y_0) \cdot(w-y) -\left(\grad f(\xi)\cdot(t-y_0)\right) \grad d(t,y_0)\cdot(w-y)}{d(t,y_0)^2}\right|\\
%
% =\left|\max_{t\in L}\frac{\grad f(t)d(t,y_0) \cdot(w-y) -\left(\grad f(\xi)\cdot(t-y_0)\right) |w-y|}{d(t,y_0)^2}\right| \\
%
 &=\left|\max_{t\in L}\frac{ |w-y| \grad f(t) \cdot(t-y_0) -|w-y|\grad f(\xi)\cdot(t-y_0) }{d(t,y_0)^2}\right|\\
 &\leq\max_{t\in L}\frac{ |w-y|  \cdot |\grad f(t)  - \grad f(\xi)| }{d(t,y_0)}\\
&\leq\max_{t\in L}c_K\frac{|t-\xi|^{\alpha_K}}{|t-y_0|}|w-y|.
\end{align*}

\bigskip

%he Hölder continuity of the gradient and the fact that \komN{where do you need this?} $|t-y_0|\geq|y-y_0|$ and \komN{No longer needed:} $|\grad d(t,y_0)| = 1$ bring us to

%\begin{multline*}
%\left|\frac{f(w)}{d(x,\partial\Omega)}-\frac{f(y)}{d(y,\partial\Omega)}\right|
%\leq%\max_{t\in L}\frac{\left|\grad f(t)-\grad f(\xi)\right| |w-y|}{d(t,y_0)}\\[10pt]
%=
%\max_{t\in L}\frac{|\grad f(t)-\grad f(\xi)|}{|t-y_0|}|w-y|
%\leq\max_{t\in L}c_K\frac{|t-\xi|^{\alpha_K}}{|t-y_0|}|w-y|.
%\end{multline*}

\bigskip

\noindent
If $\frac{|w-y_0|}{2}\leq|y-y_0|$ then $|w-y|\leq|y-y_0|\leq|t-y_0|$, and we always have $|t-\xi|\leq|t-y_0|$, hence

\begin{equation*}
    \left|\frac{f(w)}{d(x,\partial\Omega)}-\frac{f(y)}{d(y,\partial\Omega)}\right|
    \leq\max_{t\in L}c_K\frac{|w-y|}{|t-y_0|^{1-\alpha_K}}\leq c_K|w-y|^{\alpha_K}\leq 2c_K|x-y|^{\alpha_K},
\end{equation*}

\bigskip

\noindent
where in the last step we have used $|w-y|\leq2|x-y|$ for $c$ large enough.

If $|y-y_0|<\frac{|w-y_0|}{2}$ we can immediately
use the mean value theorem ($f(y_0)=0$ for all $y_0\in\partial\Omega$), thus there exists $\xi\in L_1$ and $\eta\in L_2$ ($L_1$ is the straight line between $y_0$ and $w$ and $L_2$ is the straight line between $y_0$ and $y$) such that

\begin{multline*}
  \left|\frac{f(w)}{d(w,y_0)}-\frac{f(y)}{d(y,y_0)}\right|=\left|\frac{(w-y_0)\cdot \grad f(\xi)}{d(w,y_0)}-\frac{(y-y_0)\cdot\grad f(\eta)}{d(y,y_0)}  \right|\\[10pt]
  =|\left[\grad f(\xi)-\grad f(\eta)\right]\cdot\eta_y|\leq |\grad f(\xi)-\grad f(\eta)||\eta_y|\leq c_K|\xi-\eta|^{\alpha_K}.
\end{multline*}
\bigskip
Since $|w-y|\geq\frac{|w-y_0|}{2}$ we have $|\xi-\eta|^{\alpha}\leq2^\alpha\left(\frac{|w-y_0|}{2}\right)^\alpha\leq 2^\alpha|w-y|^\alpha$, hence
\begin{equation}\label{est2}
    \left|\frac{f(w)}{d(w,y_0)}-\frac{f(y)}{d(y,y_0)}\right|\leq 2^{ \alpha_K}  c_K|w-y|^{\alpha_K}
    \leq 2 c_K|w-y|^{\alpha_K}
    \leq 4 c_K|x-y|^{\alpha_K},
\end{equation}

\bigskip

\noindent
where we have used %that $0<\alpha_K\leq 1$ and the fact
the interior ball condition for $\Omega$ and therefore $|w-y|\leq 2|x-y|$ for $c$ large enough.
Now the estimates~\eqref{est1} and~\eqref{est2} imply

\begin{multline*}
    \left|\frac{f(x)}{d(x,\partial\Omega)}-\frac{f(y)}{d(y,\partial\Omega)}\right|\leq\left|\frac{f(x)}{d(x,\partial\Omega)}-\frac{f(w)}{d(w,\partial\Omega)}\right|+\left|\frac{f(w)}{d(w,\partial\Omega)}-\frac{f(y)}{d(y,\partial\Omega)}\right|\\[10pt]
    \leq 4c_K|x-y|^{\alpha_K}+ 4c_K|x-y|^{\alpha_K}=8c_K|x-y|^{\alpha_K},
\end{multline*}
which completes the proof.
\end{proof}

%%%%%%%%%%%%%%%%%%%%%%%%%%%%%%%%
%%%%%%%%%%%%%%%%%%%%%%%%%%%%%%%%

\bigskip

The growth estimate \eqref{eq:dist-vanish} below, together with Lemma \ref{le:first}, readily implies the following simple lemma:

\begin{lemma}\label{le:second}
Let $\Omega$, $r_0$, $r$, $z$ and $f$ be as in Lemma \ref{le:first} and
assume that $g$ satisfies the same assumptions as $f$ and in addition
\begin{align}\label{eq:dist-vanish}
C_L \leq \frac{g(x)}{d(x,\partial \Omega)} \quad \text{and} \quad \frac{f(x)}{d(x,\partial \Omega)} \leq C_U
\end{align}
whenever $x,y \in \Omega\cap B(z,r)$.
%\eqref{eq:dist-vanish} in $B(z,R)\cap\Omega$.
Then it holds that
\begin{equation*}
\left|\frac{d(x,\partial\Omega)}{g(x)} - \frac{d(y,\partial\Omega)}{g(y)}\right| \leq  c_1 |x - y|^{\alpha_K} \quad \text{and} \quad
\left|\frac{f(x)}{g(x)} - \frac{f(y)}{g(y)}\right| \leq  c_2 |x - y|^{\alpha_K}
\end{equation*}
%\bigskip
%
whenever $x,y \in B(z,r/c)\cap \Omega$, and where $c_1 = 8c_Kc_L^{-2}$ and $c_2=8c_K(c_L+c_U)/c_L^2$.
%Moreover,
%
%$$
%\left|\frac{f(x)}{g(x)} - \frac{f(y)}{g(y)}\right| \leq  c |x - y|^{\alpha_K},
%$$
%
%whenever $x,y \in B(z,R/c)\cap \Omega$ and where $c=8c_K(c_L+c_U)/c_L^2$.
\end{lemma}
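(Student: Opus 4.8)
The plan is to obtain this as an essentially immediate corollary of Lemma~\ref{le:first}, combined with two elementary facts about Hölder functions: the reciprocal of a Hölder function that is bounded away from zero is again Hölder, and a product of two bounded Hölder functions is Hölder. Since $g$ is assumed to satisfy exactly the hypotheses imposed on $f$ in Lemma~\ref{le:first}, that lemma applies verbatim to both $f$ and $g$, and with the \emph{same} radius reduction $c$ (depending only on $r_0$) and the same constants $c_K,\alpha_K$ attached to $K=\overline{B(z,r/2)\cap\Omega}$. Thus for $x,y\in B(z,r/c)\cap\Omega$ we have
\[
\Bigl|\tfrac{f(x)}{d(x,\partial\Omega)}-\tfrac{f(y)}{d(y,\partial\Omega)}\Bigr|\le 8c_K|x-y|^{\alpha_K}
\quad\text{and}\quad
\Bigl|\tfrac{g(x)}{d(x,\partial\Omega)}-\tfrac{g(y)}{d(y,\partial\Omega)}\Bigr|\le 8c_K|x-y|^{\alpha_K}.
\]

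First I would handle $d/g$. Put $a(x)=g(x)/d(x,\partial\Omega)$; then $a\ge C_L>0$ on $B(z,r)\cap\Omega$ by~\eqref{eq:dist-vanish} (in particular $g>0$ there), and $d(x,\partial\Omega)/g(x)=1/a(x)$. Hence for $x,y\in B(z,r/c)\cap\Omega$,
\[
\Bigl|\tfrac{d(x,\partial\Omega)}{g(x)}-\tfrac{d(y,\partial\Omega)}{g(y)}\Bigr|
=\frac{|a(x)-a(y)|}{a(x)a(y)}\le\frac{|a(x)-a(y)|}{C_L^2}\le\frac{8c_K}{C_L^2}\,|x-y|^{\alpha_K},
\]
which is the first assertion with $c_1=8c_KC_L^{-2}$.

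For $f/g$ I would factor $f/g=P\cdot Q$ with $P(x)=f(x)/d(x,\partial\Omega)$ and $Q(x)=d(x,\partial\Omega)/g(x)$, and use the product estimate
\[
|P(x)Q(x)-P(y)Q(y)|\le |P(x)|\,|Q(x)-Q(y)|+|Q(y)|\,|P(x)-P(y)|.
\]
Here $|Q(y)|\le C_L^{-1}$ by~\eqref{eq:dist-vanish}, $|P(x)|\le C_U$ (using $f/d\le C_U$ together with $f\ge 0$, as holds for the nonnegative solutions to which the lemma is applied, so that $0\le P\le C_U$), $Q$ is $\alpha_K$-Hölder with constant $8c_KC_L^{-2}$ by the previous step, and $P$ is $\alpha_K$-Hölder with constant $8c_K$ by Lemma~\ref{le:first}. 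Substituting,
\[
|P(x)Q(x)-P(y)Q(y)|\le\Bigl(\frac{8c_KC_U}{C_L^2}+\frac{8c_K}{C_L}\Bigr)|x-y|^{\alpha_K}
=\frac{8c_K(C_L+C_U)}{C_L^2}\,|x-y|^{\alpha_K},
\]
which is the second assertion with $c_2=8c_K(C_L+C_U)/C_L^2$.

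I do not anticipate a genuine obstacle: once Lemma~\ref{le:first} is in hand the argument is pure algebra. The only points requiring a sentence of care are (i) that Lemma~\ref{le:first} may be invoked with a single common triple $(c,c_K,\alpha_K)$ for both $f$ and $g$, so that both difference quotients are controlled on the \emph{same} ball $B(z,r/c)\cap\Omega$; (ii) that the growth bounds~\eqref{eq:dist-vanish}, assumed on the larger set $B(z,r)\cap\Omega$, in particular hold on $B(z,r/c)\cap\Omega$, so that every factor above is simultaneously bounded and Hölder there; and (iii) the mild point, already flagged, that the product estimate needs $|f/d|\le C_U$ rather than merely $f/d\le C_U$, which is why positivity of $f$ (valid in all intended applications) is used.
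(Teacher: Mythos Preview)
Your proposal is correct and is essentially identical to the paper's own proof: the paper also invokes Lemma~\ref{le:first} for both $f$ and $g$, obtains the first estimate via $|1/a(x)-1/a(y)|=|a(x)-a(y)|/(a(x)a(y))\le C_L^{-2}|a(x)-a(y)|$ with $a=g/d$, and then gets the second via the same product decomposition $f/g=(f/d)\cdot(d/g)$ with the same bounds $|d/g|\le C_L^{-1}$ and $|f/d|\le C_U$. Your side remark about needing $f\ge 0$ to turn the one-sided bound $f/d\le C_U$ into $|f/d|\le C_U$ is a fair observation; the paper uses the factor $f(y)/d(y,\partial\Omega)$ without absolute value, implicitly relying on the same positivity that holds in all the applications.
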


\noindent
\begin{proof}
Let $x,y\in B(z,r/c)\cap\Omega$. Then from  Lemma~\ref{le:first} and inequality~\eqref{eq:dist-vanish} we can deduce

\begin{multline}\label{part1}
    \left|\frac{d(x,\partial\Omega)}{g(x)}-\frac{d(y,\partial\Omega)}{g(y)}\right|=\frac{d(x,\partial\Omega)\,d(y,\partial\Omega)}{g(x)g(y)}\left|\frac{g(y)}{d(y,\partial\Omega)}-\frac{g(x)}{d(x,\partial\Omega)}\right|\\[10pt]
    \leq 8c_K\frac{d(x,\partial\Omega)\,d(y,\partial\Omega)}{g(x)g(y)}\left|x-y\right|^{\alpha_K}
    \leq 8c_Kc_L^{-2}|x-y|^{\alpha_K}.
\end{multline}
\bigskip

\noindent
For the second part of the lemma we make use of~Lemma~\ref{le:first}, estimate~\eqref{eq:dist-vanish} and~\eqref{part1} to see that

\begin{multline*}
    \left|\frac{f(x)}{g(x)}-\frac{f(y)}{g(y)}\right|%=\left|\frac{f(x)}{g(x)}+\frac{d(x,\partial\Omega) f(y)}{g(x)d(y,\partial\Omega)}-\frac{d(x,\partial\Omega)f(y)}{g(x)d(y,\partial\Omega)}-\frac{f(y)}{g(y)}\right| \\[10pt]
    \leq\frac{d(x,\partial\Omega)}{g(x)}
    \left|\frac{f(x)}{d(x,\partial\Omega)}-\frac{f(y)}{d(y,\partial\Omega)}\right|+
    \frac{f(y)}{d(y,\partial\Omega)}\left|\frac{d(x,\partial\Omega)}{g(x)}-\frac{d(y,\partial\Omega)}{g(y)}\right|\\[10pt]
    \leq 8c_K c_L^{-1}|x-y|^{\alpha_K}+8c_Kc_Uc_L^{-2}|x-y|^{\alpha_K}=c|x-y|^{\alpha_K},
\end{multline*}
\bigskip

\noindent
where $c=8c_K(c_L+c_U)/c_L^2$.
\end{proof}

%%%%%%%%%%%%%%%%%%%%%%%%%%%%%%%%%%%%%%%%%%%%
%%%%%%%%%%%%%%%%%%%%%%%%%%%%%%%%%%%%%%%%%%%%
%%%%%%%%%%%%%%%%%%%%%%%%%%%%%%%%%%%%%%%%%%%%
%%%%%%%%%%%%%%%%%%%%%%%%%%%%%%%%%%%%%%%%%%%%
%%%%%%%%%%%%%%%%%%%%%%%%%%%%%%%%%%%%%%%%%%%%
%%%%%%%%%%%%%%%%%%%%%%%%%%%%%%%%%%%%%%%%%%%%
%%%%%%%%%%%%%%%%%%%%%%%%%%%%%%%%%%%%%%%%%%%%
%%%%%%%%%%%%%%%%%%%%%%%%%%%%%%%%%%%%%%%%%%%%
%%%%%%%%%%%%%%%%%%%%%%%%%%%%%%%%%%%%%%%%%%%%

\subsection{Fully nonlinear equations}
\label{sec:fully_nonlinear}

We now return to fully nonlinear elliptic equations in nondivergence form, as in \eqref{eq:main-nonlinear}.
To ensure global $C^{1,\alpha}$ we assume here, in addition to the assumptions in Corollary \ref{cor:BHI}, that
$F(x, 0, 0, 0) \equiv 0$, and that the following structural assumptions hold:
\begin{align}\label{eq:ass-nonlinear1}
\mathcal{P}^-_{\lambda, \Lambda}(X - Y ) - \mu |p - q|(1 + |p| + |q|) -  \omega(|r - s|)\notag\\
%\lambda \text{Trace} (Y)
\leq F(x, r, p, X) - F(x, s, q, Y ) \\
\leq
%\Lambda \text{Trace} (Y)
\mathcal{P}^+_{\lambda, \Lambda}(X - Y ) + \mu |p - q|(1 + |p| + |q|) +  \omega(|r - s|),\notag \\
x\in \Omega, \; r,s \in \mathbb{R}, \; p,q \in \mathbb{R}^n, \; X, Y \in \mathbb{S}^n,\notag
\end{align}
where $0 < \lambda < \Lambda$, $\nu\geq0$, and $\omega:[0, \infty] \to [0, \infty]$, $\omega(0) =0$ is a modulus of continuity satisfying
\begin{align}\label{eq:ass_omega_near}
\int_{0}^{1} \frac{dt}{\omega(t)} = \infty.
\end{align}
The following boundary $C^{1,\alpha}$-regularity result is from \cite{N19} and is originally stated in a more general and precise form for $L^p$-viscosity solutions:
\begin{lemma}\label{le:nonlinear_C1a}
Let $\Omega \subset \mathbb{R}^n$ be a $C^{1,1}$-domain, $F$ satisfy \eqref{eq:ass-nonlinear1}, $w \in \partial \Omega$ and let $u$ be a viscosity solution of \eqref{eq:main-nonlinear} in $B(w,r) \cap \Omega$, continuous in $\overline{\Omega}\cap B(w,r)$ satisfying $u = 0$ on $\partial \Omega \cap B(w,r)$.
Then there exist $c$ and $\alpha \in (0,1]$ such that
$$
|Du(x) - Du(y)| \leq c |x - y|^{\alpha}
$$
whenever $x,y \in B(w,r/c) \cap \Omega$.
\end{lemma}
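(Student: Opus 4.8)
Since the statement is quoted from \cite{N19}, the plan is to recall the strategy behind that type of boundary $C^{1,\alpha}$ estimate rather than to reprove it from scratch; the argument is in the spirit of Caffarelli's perturbative/compactness method, adapted to the superlinear gradient setting. First I would flatten the boundary: because $\partial\Omega\in C^{1,1}$ there is, near $w$, a $C^{1,1}$ change of variables straightening $\partial\Omega\cap B(w,r)$ to a piece of $\{x_n=0\}$; the transformed operator still satisfies a structure condition of the form \eqref{eq:ass-nonlinear1} with comparable $\lambda,\Lambda$, a possibly larger $\mu$, and the same type of modulus $\omega$, because the $C^{1,1}$ regularity of the change of variables keeps all the new coefficients bounded. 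So it suffices to treat $u$ vanishing on a flat piece of boundary.

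Second, I would establish the linear growth bound $|u(x)|\le C\,d(x,\partial\Omega)$ near $w$. Using $F(x,0,0,0)=0$, the uniform ellipticity, and the interior/exterior sphere conditions, one builds super- and subsolution barriers and invokes the comparison principle (Lemma \ref{le:comp-weak}, or its $L^p$-viscosity analogue in \cite{N19}); here the superlinear gradient term is the first real difficulty, since the barriers must absorb a term quadratic in the gradient, which forces either a smallness of $\sup|u|$ or a careful choice of barrier exponent. Combined with interior Harnack and $C^{1,\alpha}$ estimates this yields a gradient bound $\sup_{B(w,r/2)\cap\Omega}|Du|\le C$. With $|Du|$ bounded on the relevant region, the troublesome term $\mu|p-q|(1+|p|+|q|)$ effectively reduces to a Lipschitz-in-$p$ term with bounded coefficient, which is what makes the remaining compactness argument standard.

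Third comes the oscillation-decay iteration at a boundary point: fix $\rho\in(0,1)$ and show inductively the existence of real numbers $a_k$ (slopes of the ``tangent planes'', which must vanish on $\{x_n=0\}$, hence are of the form $a_k x_n$) such that $\|u-a_k x_n\|_{L^\infty(B_{\rho^k}(w)\cap\Omega)}\le C\rho^{k(1+\alpha)}$ and $|a_{k+1}-a_k|\le C\rho^{k\alpha}$. The inductive step is by contradiction and compactness: rescale $v_k(x)=\rho^{-k(1+\alpha)}(u-a_k x_n)(w+\rho^k x)$; if the decay fails, a subsequence of the $v_k$ converges locally uniformly to a function $v_\infty$ solving a constant-coefficient, homogeneous uniformly elliptic equation (the gradient term scales away thanks to the gradient bound from step two) with zero data on $\{x_n=0\}$, and $C^{1,\alpha'}$ for such a limit equation with $\alpha'>\alpha$ forces $v_\infty$ to be close to a linear multiple of $x_n$, contradicting the normalization. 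Summing the geometric series in $k$ gives pointwise $C^{1,\alpha}$ at $w$, and running the same argument at every boundary point in $B(w,r/2)$ gives a uniform pointwise bound at the boundary; here the Osgood-type condition \eqref{eq:ass_omega_near} is what guarantees the limit equation is well behaved.

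Finally, to upgrade the pointwise boundary estimate to the claimed two-point estimate $|Du(x)-Du(y)|\le c|x-y|^\alpha$ for all $x,y\in B(w,r/c)\cap\Omega$, I would patch with the interior $C^{1,\alpha}$ estimate: for $x,y$ with $|x-y|$ comparable to or larger than $\min(d(x,\partial\Omega),d(y,\partial\Omega))$, chain through the nearest boundary points using the pointwise bound; for $|x-y|$ small compared to the distance to the boundary, apply the rescaled interior estimate on a ball of radius $\sim d(x,\partial\Omega)$, absorbing the growth of $\|u\|$ on that ball via the linear-growth bound from step two. The main obstacle throughout is the superlinear growth in the gradient: everything hinges on first converting it into a harmless bounded-coefficient term via the a priori gradient bound near the boundary, and this bootstrap (bounded $u$ $\Rightarrow$ linear growth $\Rightarrow$ bounded $Du$ $\Rightarrow$ compactness) is the step that genuinely uses both the uniform ellipticity and condition \eqref{eq:ass_omega_near}.
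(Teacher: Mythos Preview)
Your sketch is a reasonable outline of the compactness/perturbation method behind \cite{N19}, and in that sense it is correct as a roadmap. However, the paper's proof is entirely different in spirit and much shorter: it simply checks that the structural hypothesis \eqref{eq:ass-nonlinear1} fits into the more general structure condition assumed in \cite{N19}, observes that bounded $C$-viscosity solutions are automatically $L^p$-viscosity solutions, and then invokes \cite[Theorem 1.1]{N19} as a black box. In other words, the paper treats the lemma as a direct citation, while you are sketching the proof of the cited result itself. What your approach buys is genuine insight into why the estimate holds and where each hypothesis enters; what the paper's approach buys is brevity and the avoidance of any technical pitfalls (e.g.\ the precise role of \eqref{eq:ass_omega_near}, which in \cite{N19} is tied to comparison/uniqueness rather than to the compactness step as you suggest, and the passage from $C$- to $L^p$-viscosity solutions, which you do not address). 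If the goal is to match the paper, a two-line verification of hypotheses plus citation is all that is needed.
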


\noindent
\begin{proof}
Observing that assumption \eqref{eq:ass-nonlinear1} immediately implies the more general structure assumption in \cite{N19} and
that bounded viscosity solutions are $L^p$-viscosity solutions, this follows from \cite[Theorem 1.1]{N19}.
\end{proof}

\bigskip

We now apply our estimates from the previous section--specifically Corollary \ref{cor:BHI}--to show that assumption \eqref{eq:ass-nonlinear1} also implies the desired boundary growth rate.

\begin{lemma}\label{le:nonlinear_bhi}
Let $\Omega \subset \mathbb{R}^n$
% be a domain satisfying the sphere condition with radius $r_b$,
be a $C^{1,1}$-domain, $F$ satisfy \eqref{eq:ass-nonlinear1} and $w \in \partial \Omega$. %$0 < r < r_0$.
Assume that $u_1, u_2$ are positive viscosity solutions of \eqref{eq:main-nonlinear} in $\Omega \cap B(w, 6r)$, continuous in $\overline{\Omega} \cap B(w, 6r)$
satisfying $u_1 = 0 = u_2$ on $\partial \Omega \cap B(w,6r)$.
Then there exists $c$ such that
\begin{equation*}
    c^{-1}{d(x,\partial\Omega)}\leq u_i(x)\leq c d(x,\partial\Omega),\quad
    \text{and}\quad c^{-1}\leq \frac{u_1(x)}{u_2(x)}\leq c, %\quad\forall\, x\in B(w, r/c), \quad i = 1,2.
\end{equation*}
whenever $x \in \Omega \cap B(w, r)$ and $i = 1,2.$
\end{lemma}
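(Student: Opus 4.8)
The plan is to derive the claimed growth and boundary Harnack bounds from Corollary \ref{cor:BHI} by checking that the structural hypothesis \eqref{eq:ass-nonlinear1}, together with $F(x,0,0,0)\equiv 0$, supplies the ingredients that Corollary \ref{cor:BHI} requires: namely, the one-sided bounds \eqref{eq:ass_drift_sub} and \eqref{eq:ass_drift_super} on $F$, and explicit increasing classical solutions $\check h$, $\hat h$ of the associated ODIs \eqref{eq:ODE} and \eqref{eq:ODE2} whose ratio is bounded above and below. First I would set $r=s$, $q=p$ in \eqref{eq:ass-nonlinear1} and use $F(x,0,0,0)=0$ to obtain, for every $(x,r,p,X)$,
\begin{align*}
\mathcal{P}^-_{\lambda,\Lambda}(X) - \mu|p|(1+|p|) - \omega(|r|) \leq F(x,r,p,X) \leq \mathcal{P}^+_{\lambda,\Lambda}(X) + \mu|p|(1+|p|) + \omega(|r|).
\end{align*}
Thus \eqref{eq:ass_drift_sub} holds with $\Phi^+(t,s,|p|) = \mu|p|(1+|p|) + \omega(s)$, constant ellipticity $\lambda(t)\equiv\lambda$, $\Lambda(t)\equiv\Lambda$, and \eqref{eq:ass_drift_super} with $\Phi^-(t,s,|p|) = -\mu|p|(1+|p|) - \omega(s)$. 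Note $\Phi^+ \geq 0$ and $\Phi^-\leq 0$, so $L=\lambda$ in \eqref{eq:ODE} and $L=\lambda$ in \eqref{eq:ODE2}; also $\Phi^+$ is trivially nonincreasing in $t$ and $\Phi^-$ nondecreasing in $t$ (both are $t$-independent), so all monotonicity requirements are met. Since the solutions $u_i$ are bounded on $\overline\Omega\cap B(w,6r)$ — say by $M$ — and we may shrink $r$ so that $2r$ is below the sphere radius of $\Omega$, we only need $\omega$ and the gradient term controlled on the relevant ranges of $s$ and $h'$.

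Next I would produce the barrier functions. Because $u_i$ vanishes continuously on the boundary and is bounded, the decay is at worst of order $d(x,\partial\Omega)$, so one expects $h'$ to stay bounded on $(0,r)$; I would therefore look for $\check h$, $\hat h$ with $\check h' , \hat h'$ bounded by some fixed constant $N$ on $[0,r]$, so that the superlinear gradient term $\mu|h'|(1+|h'|)$ contributes only a bounded forcing $\leq \mu N(1+N) =: C_0$, and the term $\omega(h)$ contributes $\leq \omega(M)=:C_1$ since $0\leq h\leq M$. This reduces \eqref{eq:ODE} to the linear differential inequality $\check h'' \geq (C_0+C_1)/\lambda + (n\Lambda/(\lambda r))\check h'$ and \eqref{eq:ODE2} to $\hat h'' \leq -(C_0+C_1)/\lambda - (n\Lambda/(\lambda r))\hat h'$ — exactly the situation already solved explicitly in Section \ref{sec:example_-1,1}. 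Borrowing those formulas, with $K=n\Lambda/(\lambda r)$ and constant forcing $b:=(C_0+C_1)/\lambda$, gives
\begin{align*}
\check h(t) = \Big(m + \tfrac{b}{K^2}\Big)\frac{e^{Kt}-1}{e^{Kr}-1} - \tfrac{b}{K^2}Kt, \qquad \hat h(t) = \Big(M + \tfrac{b}{K^2}\Big)\frac{1-e^{-Kt}}{1-e^{-Kr}} - \tfrac{b}{K^2}Kt,
\end{align*}
with $\check h(0)=\hat h(0)=0$. One must then choose $m\leq \inf_{\Gamma_{w,r}}u_i$ and check, as in \eqref{eq:jomenjahaomuärstortnoginnisåklart}, that $\check h$ is increasing; this holds provided $m/r^2$ exceeds a threshold $\alpha$ depending on $\lambda,\Lambda,n,b$. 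To guarantee this, I would first invoke the Harnack inequality for $u_i$ (which holds since $u_i$ are positive solutions of a uniformly elliptic equation of this type) to see that $\inf_{\Gamma_{w,r}}u_i \approx u_i(x_r)$ for a fixed interior point $x_r$ at distance $\approx r$ from $\partial\Omega$; rescaling, or simply shrinking $r$ and tracking the $r$-dependence, one arranges $\inf_{\Gamma_{w,r}}u_i / r^2 > \alpha$. (The cleanest route is to rescale the whole configuration to unit scale so that $r=1$, apply the argument there, and scale back; then the condition $\inf u_i \gtrsim r^2$ becomes automatic after absorbing constants, since $\inf_{\Gamma_{w,r}}u_i \gtrsim M$ up to a dimensional/ellipticity constant by Harnack.) Once $\check h$ is increasing with $\check h(0)=0$, $\check h(r)=m$ and $\hat h$ increasing with $\hat h(0)=0$, $\hat h(r)=M\geq \sup u_i$, and a posteriori $\check h' , \hat h' \leq N$ on $[0,r]$ (consistency of the bootstrap, which is verified since the explicit $\check h' , \hat h'$ are monotone and hence bounded by their endpoint values, giving a self-improving choice of $N$), Corollary \ref{cor:BHI} yields
\begin{align*}
\check h(d(x,\partial\Omega)) \leq u_i(x) \leq \hat h(d(x,\partial\Omega)), \qquad x\in\Omega\cap B(w,r).
\end{align*}
Finally, since $\check h(t) \gtrsim t$ and $\hat h(t) \lesssim t$ on $[0,r]$ — both are $C^2$ with $\check h(0)=\hat h(0)=0$ and $\check h'(0) , \hat h'(0) > 0$, and convex/concave respectively — this gives $c^{-1}d(x,\partial\Omega) \leq u_i(x) \leq c\, d(x,\partial\Omega)$; and $1 \leq \hat h(t)/\check h(t) \lesssim 1$ on $(0,r)$ gives $c^{-1}\leq u_1/u_2 \leq c$ by the second half of Corollary \ref{cor:BHI}.

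The main obstacle is the interplay between the ODI forcing and the requirement that the barriers be increasing: the gradient term $\mu|p|(1+|p|)$ is genuinely nonlinear in $h'$, so one cannot literally quote the linear computation of Section \ref{sec:example_-1,1} until the a priori bound $h'\leq N$ is in place, yet that bound is itself extracted from the (barrier) solution. I would resolve this with a fixed-point / continuity-in-scale argument: work at unit scale, where boundedness of $u_i$ (hence of $M$) and the Harnack inequality make the threshold condition on $\check h$ automatic, so that a genuine increasing $\check h$ with bounded derivative exists; the superlinear term is then honestly bounded by a constant depending only on $n,\lambda,\Lambda,\mu$ and the (normalized) data, closing the loop. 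The secondary technical point — verifying that $\omega(h)$ is harmless — is immediate once $0\leq h\leq M$, since then $\omega(h)\leq\omega(M)<\infty$; the Dini condition \eqref{eq:ass_omega_near} plays no role here (it is needed only for the $C^{1,\alpha}$ estimate of Lemma \ref{le:nonlinear_C1a}).
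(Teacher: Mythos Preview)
Your overall strategy—verify \eqref{eq:ass_drift_sub}--\eqref{eq:ass_drift_super} and then apply Corollary~\ref{cor:BHI}—is exactly what the paper does, but your construction of the barriers $\check h,\hat h$ has a genuine gap, and it diverges from the paper's in a way that matters.

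The bootstrap you propose (assume $h'\le N$, replace $\mu|p|(1+|p|)$ by the constant $C_0=\mu N(1+N)$, then quote the linear example of Section~\ref{sec:example_-1,1}) does not close. With $K=n\Lambda/(\lambda r)$ and forcing $b=(C_0+C_1)/\lambda$, the explicit concave $\hat h$ from that example satisfies $\hat h'(0)=\frac{KM}{1-e^{-Kr}}+\frac{b}{K}\frac{e^{-Kr}}{1-e^{-Kr}}$, i.e.\ $\hat h'(0)\approx c_1 M/r + c_2 r\,N^2$. Self-consistency requires $N\ge c_1 M/r + c_2 r\,N^2$, which has \emph{no} solution $N$ once $M$ exceeds a structural threshold; shrinking $r$ only makes $c_1 M/r$ worse, and the equation is not scale-invariant, so ``working at unit scale'' does not remove the dependence on $M$. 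The same issue hits $\check h$ on the convex side. The Harnack/rescaling fix you sketch cannot turn a quadratic constraint in $N$ into a linear one.

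The paper avoids this entirely by treating the quadratic gradient term \emph{nonlinearly}. For the upper bound it first uses properness ($F$ nondecreasing in $u$) to drop the $\omega$ term—$\mathcal P^-_{\lambda,\Lambda}(X)-\mu(|p|+|p|^2)\le F(x,0,p,X)\le F(x,s,p,X)$ for $s\ge 0$—so the ODI is $\hat h''\le -(\mu/\lambda+K)\hat h' -(\mu/\lambda)(\hat h')^2$, a Bernoulli equation solved explicitly for $\hat f=\hat h'$; one then checks $\hat h(r)\to\infty$ as the initial slope $\nu\to\infty$, so \emph{any} $M$ is captured. For the lower bound the paper \emph{does} use the Osgood condition \eqref{eq:ass_omega_near}—contrary to your claim that it plays no role—to get $\omega(s)\le c\sqrt{s}$ near $0$; this makes the zeroth-order forcing vanish with $\check h$ and lets the simple exponential $\check h(t)=m\frac{e^{At}-1}{e^{Ar}-1}$ work for $A$ large and $m$ small (the smallness of $m$ kills the quadratic term since $\check h'\lesssim mA$), so no threshold of the type $m/r^2>\alpha$ ever appears. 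Replacing $\omega(\check h)$ by the constant $\omega(M)$, as you do, is precisely what creates that threshold and forces you into the Harnack detour.
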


\noindent
\begin{proof}
For a lower bound we first note that \eqref{eq:ass-nonlinear1} implies
\begin{align*}%\label{eq:tjohej}
F(x,s,p,X) \leq P^+_{\lambda, \Lambda}(X) + \mu (|p| + |p|^2) + \omega(s)
\end{align*}
whenever $s \in [0,\infty), x, p \in \mathbb{R}^n, X \in \mathbb{S}^n$.
Since $\omega$ satisfies \eqref{eq:ass_omega_near} we deduce $\omega(s) \leq c \sqrt{s}$ for $s\in [0,1]$ and hence, according to Corollary \ref{cor:BHI} it suffices to find a lower bound on an increasing classical solution of
\begin{align*}
 \check h'' \geq  \left(\frac{\mu}{\lambda} + K\right) \check h' + \frac{\mu}{\lambda} \check h'^2  + \frac{c}{\lambda} h \quad 0< t< r, \quad \check h(0) = 0, \quad \check h(r) \leq m,
\end{align*}
where $K = \frac{n\Lambda}{r\lambda}$.
Standard calculations show that for $A$ large,
and $m \leq \inf_{\Gamma_{w,r}} u_i$, $i = 1,2$, small enough, the function
\begin{align*}
\check h(t) = m \frac{e^{A t} - 1}{e^{A r} - 1}
\end{align*}
obeys the desired properties.
Clearly, there is $c$ such that $\check h(t) \geq c^{-1} t$ on $[0,r]$.

To establish an upper bound we observe that \eqref{eq:ass-nonlinear1},
$\omega(0) = 0$, and \eqref{eq:deg-ellipt-proper} imply
$$
P^-_{\lambda, \Lambda}(X) - \mu (|p| + |p|^2) \leq F(x,0,p,X),
$$
whenever $x, p \in \mathbb{R}^n, X \in \mathbb{S}^n$.
According to Corollary \ref{cor:BHI} it suffices to find an upper bound on an increasing classical solution of
\begin{align}\label{eq:diff-nonlinear-bhi}
\hat h'' \leq - \left(\frac{\mu}{\lambda} + K\right) \hat h' - \frac{\mu}{\lambda} \hat h'^2 \quad 0< t< r, \quad \hat h(0) = 0, \quad \hat h(r) \geq \sup_{B(w,6 r)\cap \Omega} u_i,
\end{align}
$i = 1,2$, where $K = \frac{n\Lambda}{r\lambda}$.
Let $\hat f = \hat h'$ and observe that then
$$
\hat f(t) =
\frac{\lambda (\mu+K)}{ \left(\frac{\lambda{(\mu + K)}}{\nu} e^{(\mu+K) t} + 1\right) - 1}
$$
solves the ODI in \eqref{eq:diff-nonlinear-bhi} with equality on $0 < t < r$ and satisfies $\hat f(0) = \nu$.
Moreover, $\hat f > 0$ is decreasing so that
$$
\hat h(t) = \int_0^t \hat f(s) ds
$$
is increasing, concave, and it is not hard to see that $\hat h(r) \to \infty$ as $\nu \to \infty$.
Therefore, taking $\nu$ large enough,
 $\hat h$ satisfies \eqref{eq:diff-nonlinear-bhi},
and in addition $\hat h(t) \leq \nu t$ on $[0, r]$.

Armed with the above solutions $\check h$ and $\hat h$, and the bounds of them on $[0,r]$, the lemma follows by an application of Corollary \ref{cor:BHI}.
\end{proof}\\

We are now ready to state and prove that the ratio of two positive viscosity solutions, vanishing continuously on a portion of the boundary, is Hölder continuous.

\begin{theorem}\label{th:u/v Hölder fully nonlinear}
Let $\Omega \subset \mathbb{R}^n$ be a $C^{1,1}$-domain,
$F$ satisfy \eqref{eq:ass-nonlinear1}, and $w \in \partial \Omega$.
Assume that $u$ and $v$ are positive viscosity solution of \eqref{eq:main-nonlinear} in $\Omega \cap B(w, r)$, continuous in $\overline{\Omega} \cap B(w, r)$
satisfying $u = 0 = v$ on $\partial \Omega \cap B(w,r)$.
Then there exists $c$ such that
\begin{equation*}%\label{eq:}
\left| \frac{u(x)}{v(x)} - \frac{u(y)}{v(y)}\right | \leq c  |x-y|^{\alpha}
\end{equation*}
whenever $x,y \in B(w, r/c) \cap \Omega$ and where $\alpha$ is from Lemma \ref{le:nonlinear_C1a}.
Moreover, the result holds with $u$ or $v$ replaced by $d(x, \partial \Omega)$ as well.
\end{theorem}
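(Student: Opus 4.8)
The plan is to concatenate the ingredients already in place: the boundary $C^{1,\alpha}$ estimate of Lemma \ref{le:nonlinear_C1a}, the two-sided growth bounds of Lemma \ref{le:nonlinear_bhi}, and the elementary quotient estimates of Lemmas \ref{le:first} and \ref{le:second}. None of these requires more than the hypotheses of the theorem, so the proof is essentially a matter of matching up constants and radii.

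First I would fix a working ball. Since $u$ and $v$ are positive viscosity solutions of \eqref{eq:main-nonlinear} in $\Omega\cap B(w,r)$, vanishing on $\partial\Omega\cap B(w,r)$, Lemma \ref{le:nonlinear_bhi} applied with radius $r/6$ gives a constant with $c^{-1}d(x,\partial\Omega)\le u(x),v(x)\le c\,d(x,\partial\Omega)$ for $x\in\Omega\cap B(w,r/6)$. Applying Lemma \ref{le:nonlinear_C1a} to $u$ and to $v$ yields an exponent $\alpha\in(0,1]$ and a constant so that $|Du(x)-Du(y)|\le c|x-y|^{\alpha}$ and $|Dv(x)-Dv(y)|\le c|x-y|^{\alpha}$ for $x,y\in\Omega\cap B(w,r/c)$ (taking the smaller of the two exponents produced by the lemma and enlarging the constant if necessary). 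Since $\Omega$ is a $C^{1,1}$-domain it satisfies the interior ball condition, and after one more shrinking of the ball the distance function $d(\cdot,\partial\Omega)$ is $C^{1,1}$ there with a well-defined nearest-point projection onto $\partial\Omega$; so all hypotheses of Lemmas \ref{le:first} and \ref{le:second} (with the interior ball radius supplied by the domain) are available on a common ball $B(w,r/c)\cap\Omega$.

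Next I would apply Lemma \ref{le:second} with $f=u$, $g=v$, $\alpha_K=\alpha$, $c_K$ the $C^{1,\alpha}$-constant from Lemma \ref{le:nonlinear_C1a}, and $c_L,c_U$ the constants from Lemma \ref{le:nonlinear_bhi}. The assumptions of Lemma \ref{le:first} on $f$ and $g$ -- membership in $C^{1,\alpha}$ up to $\partial\Omega$ and vanishing on $\partial\Omega\cap B(w,\cdot)$ -- hold by Lemma \ref{le:nonlinear_C1a} and the hypothesis $u=0=v$ on the boundary, while the additional condition \eqref{eq:dist-vanish} is precisely the growth bound from Lemma \ref{le:nonlinear_bhi}. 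Lemma \ref{le:second} then yields $|u(x)/v(x)-u(y)/v(y)|\le c|x-y|^{\alpha}$ on $B(w,r/c)\cap\Omega$, which is the assertion of the theorem. For the final sentence, the estimate for $u(x)/d(x,\partial\Omega)$ is immediate from Lemma \ref{le:first} with $f=u$, and the estimate for $d(x,\partial\Omega)/v(x)$ (and likewise $d(x,\partial\Omega)/u(x)$) is the first conclusion of Lemma \ref{le:second} with $g=v$ (resp. $g=u$).

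There is no genuine obstacle here; the only things demanding care are (i) bookkeeping the various radii so that Lemmas \ref{le:nonlinear_C1a}, \ref{le:nonlinear_bhi}, \ref{le:first} and \ref{le:second} all apply on one ball $B(w,r/c)\cap\Omega$, and (ii) ensuring a single Hölder exponent: Lemma \ref{le:nonlinear_C1a} is invoked separately for $u$ and $v$, and one takes the minimum of the resulting exponents (enlarging $c$) before feeding it as $\alpha_K$ into the quotient lemmas. Everything else is a direct application of the cited results.
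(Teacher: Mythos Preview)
Your proposal is correct and follows exactly the paper's approach: the paper's proof consists of the single sentence ``The result is an immediate consequence of Lemmas \ref{le:first}, \ref{le:second}, \ref{le:nonlinear_C1a} and \ref{le:nonlinear_bhi},'' and you have simply spelled out how these four lemmas combine. Your attention to matching radii and taking a common H\"older exponent is appropriate bookkeeping that the paper leaves implicit.
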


\noindent
\begin{proof}
The result is an immediate consequence of Lemmas \ref{le:first}, \ref{le:second}, \ref{le:nonlinear_C1a} and \ref{le:nonlinear_bhi}.
\end{proof}\\

\noindent
We remark that the constants in Lemma \ref{le:nonlinear_C1a},  Lemma \ref{le:nonlinear_bhi} and Theorem \ref{th:u/v Hölder fully nonlinear} may comprise complicated dependence on the ingoing parameters. %solutions and the class of equations.
To avoid technicalities, we have chosen to not go into such details in this general setting.

Even though the class of equations considered here is rather general, it does not cover the $\infty$-Laplace equation (which fails to be elliptic in the sense of \eqref{eq:ass-nonlinear1}) nor the variable exponent $p$-Laplace equation (which fails due to a non-Lipschitz dependence on the gradient).
For both equations, the boundary growth estimates, $u \approx d$,
is well known, but the $C^{1,\alpha}$ estimates are, to the best of our knowledge, only established in the interior of the domain, and only in planar domains for the $\infty$-Laplace equation.
However, we can circumvent the lack of boundary regularity by applying a Schwarz reflection argument if the boundary is locally flat.  In the following subsections we will do so and thereby prove versions of Theorem \ref{th:u/v Hölder fully nonlinear} for $\infty$-harmonic functions and for $p(x)$-harmonic functions.

%%%%%%%%%%%%%%%%%%%%%%%%%%%%%%%%%%%%%%%%%%%%
%%%%%%%%%%%%%%%%%%%%%%%%%%%%%%%%%%%%%%%%%%%%
%%%%%%%%%%%%%%%%%%%%%%%%%%%%%%%%%%%%%%%%%%%%
%%%%%%%%%%%%%%%%%%%%%%%%%%%%%%%%%%%%%%%%%%%%
%%%%%%%%%%%%%%%%%%%%%%%%%%%%%%%%%%%%%%%%%%%%
%%%%%%%%%%%%%%%%%%%%%%%%%%%%%%%%%%%%%%%%%%%%
%%%%%%%%%%%%%%%%%%%%%%%%%%%%%%%%%%%%%%%%%%%%
%%%%%%%%%%%%%%%%%%%%%%%%%%%%%%%%%%%%%%%%%%%%
%%%%%%%%%%%%%%%%%%%%%%%%%%%%%%%%%%%%%%%%%%%%

\subsection{The $p(x)$-Laplace equation}
\label{sec:p(x)}

In this section we derive the analogue of Theorem \ref{th:u/v Hölder fully nonlinear} for solutions of the $p$-Laplace equation with a variable exponent, that is,
\begin{align}\label{eq:p(x)-lap}
\nabla  \cdot \left(|\nabla u|^{p(x)-2} \nabla u\right) = 0
\end{align}
in which $p:\Omega\to (1,\infty)$, $1<p^-\leq p(x)\leq p^+<\infty$ is a continuous function called a \emph{variable exponent}.
We consider solutions in the following standard weak sense.

\begin{definition}
A function $u\in W^{1,\,p(x)}_{loc}(\Omega)$ is a weak subsolution (supersolution) in $\Omega$ if
\begin{align}\label{weakp}
    \int_{\Omega} |\grad u|^{p(x)-2}\grad u \cdot \grad\phi\, dx \leq (\geq) 0
\end{align}
for all nonnegative $\phi\in C^{\infty}_0(\Omega)$.
\end{definition}

\noindent
A function that is both a weak subsolution and a weak supersolution is called a weak solution of the $p(x)$-Laplace equation \eqref{eq:p(x)-lap}.
A continuous weak solution is called a $p(x)$\emph{-harmonic function}.
In the above, $C^{\infty}_0(\Omega)$ denotes infinitely differentiable functions with compact support in $\Omega$,
$W^{1,\,p(x)}$ is the \emph{variable exponent Sobolev space} which consists of all functions $u\in L^{p(x)}(\Omega)$ whose weak gradient $\grad u\in L^{p(x)}(\Omega)$.
For these and further details concerning variable exponent $p$-Laplace equation, see e.g. \cite{HHLN10}.

The boundary Harnack inequality in
 $C^{1,1}$-domains is by now well established for
$p(x)$-harmonic functions; see, for example, \cite{AL16, AJ17, LOT20}.
In fact, the results in these works ensure that Lemma \ref{le:nonlinear_bhi} remains valid when
$u_1$ and $u_2$ are assumed to be positive
$p(x)$-harmonic functions.
It is worth noting that the boundary Harnack inequality can also be derived via Corollary \ref{cor:BHI}, by first rewriting \eqref{eq:p(x)-lap} in the form \eqref{eq:main-nonlinear}, as done in \cite[Section 6]{LOT20}, and then analyzing the resulting ODIs following the approach in \cite[Section 3.3]{L22}.

To establish regularity of the quotient of $p(x)$-harmonic functions we will also use the following $C^{1,\alpha}$ estimate from \cite{AM01}.

\begin{lemma}\label{le:AM}
Let $r>0$, $p$ be a bounded Lipschitz continuous variable exponent function on $B(0,R)$ and let $u$ be a $p(x)$-harmonic functions in $B(0,r)$.
Then there exist $c>0$ and $\alpha\in(0,1]$ such that
\begin{equation*}
        |\grad u(x)-\grad u(y)|\leq c|x-y|^{\alpha},\quad \forall (x,y)\in B(0,r/c).
\end{equation*}
\end{lemma}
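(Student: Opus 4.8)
The statement to prove is Lemma~\ref{le:AM}, which is an interior $C^{1,\alpha}$-estimate for $p(x)$-harmonic functions with Lipschitz variable exponent. Since the lemma is explicitly attributed to \cite{AM01}, the plan is essentially to quote that reference after checking that its hypotheses are met in the present formulation.

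The plan is as follows. First I would recall the precise regularity statement in \cite{AM01} (Acerbi--Mingione), which establishes local $C^{1,\alpha}$-regularity for local minimizers of functionals with $p(x)$-growth, and in particular for weak solutions of \eqref{eq:p(x)-lap}, under the assumption that the exponent $p(\cdot)$ is (locally) Lipschitz continuous and bounded between $p^- > 1$ and $p^+ < \infty$. Second, I would observe that these are exactly the hypotheses imposed in the statement of Lemma~\ref{le:AM}: $p$ is a bounded Lipschitz continuous variable exponent on the larger ball $B(0,R)$, and $u$ is $p(x)$-harmonic, hence a continuous weak solution of \eqref{eq:p(x)-lap} in $B(0,r)$ in the sense of \eqref{weakp}, which is the same as being a local minimizer of the associated energy. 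Third, I would invoke the interior estimate of \cite{AM01} on, say, the concentric ball $B(0,r/2)$, obtaining that $\grad u$ is $C^{\alpha}$ on a slightly smaller ball with an explicit modulus; a standard covering/scaling argument then yields the stated form with the single constant $c$ controlling both the radius $r/c$ and the Hölder seminorm.

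The only point requiring a little care is that \cite{AM01} typically phrases its conclusion for minimizers of $\int |\nabla u|^{p(x)}\,dx$ (or more general integrands), so I would note the standard fact that continuous weak solutions of \eqref{eq:p(x)-lap} coincide with local minimizers of that energy, so the theorem applies directly; alternatively, \cite{AM01} also treats the Euler--Lagrange equation directly. I would also remark that the Lipschitz continuity of $p$ is what is needed here — a weaker log-Hölder condition suffices for $C^{0,\alpha}$ but the gradient estimate requires the stronger assumption, which is precisely why it is hypothesized. Since nothing beyond citing \cite{AM01} and matching hypotheses is involved, there is no genuine obstacle; the ``hard part'' is purely bookkeeping, namely tracking how the radius on which the estimate holds and the constant depend on $r$, $R$, $p^{\pm}$ and the Lipschitz norm of $p$, which we absorb into the single constant $c$ as is customary in the paper's notation.

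\begin{proof}
This is \cite[Theorem~2, see also the remarks in Sections~1--2]{AM01}. Indeed, a $p(x)$-harmonic function in $B(0,r)$ is, by definition, a continuous weak solution of \eqref{eq:p(x)-lap} in the sense of \eqref{weakp}, and hence a local minimizer in $B(0,r)$ of the variable exponent energy $\int |\nabla u|^{p(x)}\,dx$. Since $p$ is assumed bounded, $1 < p^- \le p(x) \le p^+ < \infty$, and Lipschitz continuous on $B(0,R) \supset B(0,r)$, the hypotheses of \cite{AM01} are fulfilled, and the interior $C^{1,\alpha}$-regularity theorem there yields constants $c_0 > 0$ and $\alpha \in (0,1]$, depending only on $n$, $p^-$, $p^+$ and the Lipschitz norm of $p$, such that $|\grad u(x) - \grad u(y)| \le c_0 r^{-1-\alpha}\,\|u\|_{L^\infty(B(0,r))}\,|x-y|^\alpha$ for $x,y \in B(0,r/2)$. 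Absorbing the radius- and norm-dependent factors, together with the loss of domain, into a single constant $c \ge 1$ gives the stated estimate on $B(0,r/c)$.
\end{proof}
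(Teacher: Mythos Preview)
Your proposal is correct and follows the same approach as the paper, which simply states that the result follows from \cite[Theorem 2.2]{AM01}. Your version adds the hypothesis-matching and minimizer remark, but this is just an elaboration of the same citation.
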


\noindent
\begin{proof}
This follows from \cite[Theorem 2.2]{AM01}.
\end{proof}

\bigskip

Since the $C^{1,\alpha}$ estimate for $p(x)$-harmonic functions appears to be known only in the interior,
we may not immediately apply Lemma \ref{le:second} to obtain $u/v \in C^{\alpha}$ near $C^{1,1}$-boundaries.
To be able to apply the interior $C^{1,\alpha}$ estimate over the boundary, we assume that the domain is a half space and extend the $p(x)$ harmonic functions to the whole space via reflection.
For $y\in\mathbb{R}^n$, $r\in\mathbb{R}^+$ and
$w=(w_1,...,w_n)\in\mathbb{R}^n$ we define the rectangles
\begin{align} \label{rectangel}
\begin{split}
Q_r(w) & = \{y:|y_i-w_i|<r,\, i\in\{1,...,n\}\}, \\
  Q^+_r(w) & = \{y:|y_i-w_i|<r,\, i\in\{1,...,n-1\},\, 0<y_n-w_n<r\}.
\end{split}
\end{align}
%
%To extend the $p(x)$ harmonic functions over the boundary,
We will make use of the following Schwarz reflection principle.

\begin{lemma}\label{schwarz}
Suppose, for some $w\in \mathbb{R}^n$ and $r > 0$, that $u$ is $p(x)$-harmonic in $Q_r^+(w)$,
continuous on $\overline Q_r^+(w)$ with $u(x',0) = 0$.
Define

\begin{equation*}\label{mirror}
\tilde{u}(x',x_n)=\left\{
	\begin{array}{ll}
		u(x',x_n),\quad\, x_n\geq 0  \\[24pt]
		-u(x',-x_n),\quad\, x_n<0,
	\end{array}
\right. \qquad
\tilde{p}(x',x_n)=\left\{
	\begin{array}{ll}
		p(x',x_n),\quad\, x_n\geq 0  \\[24pt]
		p(x',-x_n),\quad\, x_n<0.
	\end{array}
\right.
\end{equation*}

\bigskip
\noindent
Then $\tilde{u}$ is $\tilde p(x)$-harmonic in $Q_r(w)$.
\end{lemma}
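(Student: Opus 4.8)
The plan is to verify that the reflected function $\tilde u$ is a weak solution of the $\tilde p(x)$-Laplace equation across the hyperplane $\{x_n = 0\}$, by testing the weak formulation \eqref{weakp} against an arbitrary nonnegative $\phi \in C^\infty_0(Q_r(w))$ and splitting the integral into the upper and lower halves. On the upper half $Q_r^+(w)$ the function $\tilde u$ equals $u$ and $\tilde p$ equals $p$, so the contribution there is controlled directly by the hypothesis that $u$ is $p(x)$-harmonic. On the lower half, the change of variables $x_n \mapsto -x_n$ should transform the lower-half integral into an integral over the upper half involving $u$ and a reflected test function; the odd reflection of $u$ together with the even reflection of $p$ is precisely what is needed for the $|\nabla u|^{p(x)-2}\nabla u \cdot \nabla\phi$ structure to match up after this substitution, since reflecting $x_n$ flips the sign of both $\partial_{x_n} u$ and $\partial_{x_n}\phi$ while leaving $|\nabla u|$ and the tangential derivatives unchanged.

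First I would record the elementary facts about the reflected data: $\tilde p$ is continuous on $Q_r(w)$ (the even reflection glues continuously because $p$ is continuous up to $\{x_n=0\}$), $\tilde p$ inherits the bounds $1 < p^- \le \tilde p \le p^+$, and $\tilde u$ is continuous on $Q_r(w)$ (odd reflection glues continuously precisely because $u(x',0)=0$). Next I would establish that $\tilde u \in W^{1,\tilde p(x)}_{loc}(Q_r(w))$; this is the standard fact that an odd reflection across a hyperplane of a Sobolev function vanishing (in the trace sense, which here follows from continuity and the boundary value) on that hyperplane is again Sobolev, with weak gradient given by the reflected gradient, i.e. $\partial_i \tilde u(x',x_n) = (\partial_i u)(x',-x_n)$ for $i < n$ and $\partial_n \tilde u(x',x_n) = (\partial_n u)(x',-x_n)$ for $x_n < 0$. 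One should check that no singular (distributional) term appears on $\{x_n=0\}$, which again uses the vanishing boundary value.

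Then I would carry out the main computation. For $\phi \in C^\infty_0(Q_r(w))$ nonnegative, write
\begin{align*}
\int_{Q_r(w)} |\nabla \tilde u|^{\tilde p(x)-2}\nabla \tilde u \cdot \nabla \phi \, dx
= \int_{Q_r^+(w)} \cdots \, dx + \int_{Q_r^-(w)} \cdots \, dx,
\end{align*}
where $Q_r^-(w)$ is the lower half. In the second integral substitute $y=(x',-x_n)$; then $|\nabla\tilde u(x)| = |\nabla u(y)|$, $\tilde p(x) = p(y)$, and $\nabla\tilde u(x)\cdot\nabla\phi(x) = \nabla u(y)\cdot\nabla\psi(y)$ where $\psi(y',y_n) := \phi(y',-y_n)$ — the two sign flips from the $n$-th components cancel. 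Hence the lower-half integral equals $\int_{Q_r^+(w)} |\nabla u|^{p(y)-2}\nabla u\cdot\nabla\psi\,dy$. Now $\phi + \psi$ restricted to $Q_r^+(w)$ is the trace of a nonnegative function but is not compactly supported inside $Q_r^+(w)$, so one cannot quite use it as a test function directly: this is the main obstacle. I would handle it by a cutoff/approximation argument — multiply by a sequence $\chi_\varepsilon$ equal to $1$ away from $\{x_n=0\}$ and supported in $\{x_n > 0\}$, apply the weak equation for $u$ to $(\phi+\psi)\chi_\varepsilon$, and show the error terms (which involve $\partial_n(\phi+\psi)$ near $\{x_n=0\}$ paired against the $L^{p(x)/(p(x)-1)}$ function $|\nabla u|^{p(x)-1}$ over a thin slab) vanish as $\varepsilon\to 0$; here it is convenient that $\phi+\psi$ is even in $x_n$ so $\partial_n(\phi+\psi)$ vanishes on $\{x_n=0\}$, giving an extra factor of $x_n$ that kills the slab contribution. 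Combining, $\int_{Q_r(w)} |\nabla\tilde u|^{\tilde p-2}\nabla\tilde u\cdot\nabla\phi\,dx = \int_{Q_r^+(w)} |\nabla u|^{p-2}\nabla u\cdot\nabla(\phi+\psi)\,dx = 0$, and since $\phi$ was an arbitrary nonnegative test function and the same computation with $-\phi$ gives the reverse inequality, $\tilde u$ is a weak solution, hence (being continuous) $\tilde p(x)$-harmonic in $Q_r(w)$.
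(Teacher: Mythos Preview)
There is a sign error in your computation of the reflected gradient. For $x_n<0$ you write $\partial_i\tilde u(x',x_n)=(\partial_i u)(x',-x_n)$ for $i<n$, but the odd reflection $\tilde u(x',x_n)=-u(x',-x_n)$ gives
\[
\partial_i\tilde u(x',x_n)=-(\partial_i u)(x',-x_n)\quad(i<n),\qquad \partial_n\tilde u(x',x_n)=(\partial_n u)(x',-x_n).
\]
Thus after the substitution $y=(x',-x_n)$ every term in the dot product picks up a minus sign (the $n$-th term from the chain rule on $\phi$, the tangential terms from the sign in $\tilde u$), and the lower-half integral equals $-\int_{Q_r^+(w)}|\nabla u|^{p-2}\nabla u\cdot\nabla\psi\,dy$, not $+\int$. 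The total is therefore
\[
\int_{Q_r^+(w)}|\nabla u|^{p(x)-2}\nabla u\cdot\nabla(\phi-\psi)\,dx,
\]
and $\phi-\psi$ is the \emph{odd} part of the test function: it vanishes on $\{x_n=0\}$, hence lies in $W^{1,p(x)}_0(Q_r^+(w))$ and can be used directly as a test function. No cutoff is needed.

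Your proposed cutoff argument for $\phi+\psi$ cannot succeed, because the claim it would establish is false: take $p$ constant and $u(x)=x_n$, for which $\int_{Q_r^+}|\nabla u|^{p-2}\nabla u\cdot\nabla(\phi+\psi)=-2\int\phi(x',0)\,dx'\neq 0$ in general. (Also, the error term from the cutoff involves $(\phi+\psi)\,\partial_n\chi_\varepsilon$, not $\partial_n(\phi+\psi)$, so the vanishing of the latter on the hyperplane does not help.)

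Once the sign is fixed, your argument coincides with the paper's: the paper decomposes an arbitrary test function into its even and odd parts in $x_n$, observes that the even part produces two half-box integrals that cancel exactly by reflection, and uses that the odd part vanishes on $\{x_n=0\}$ so is an admissible test function for $u$ in $Q_r^+(w)$.
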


\noindent
\begin{proof}
Let $\theta\in C^{\infty}_0\left(Q_r(w)\right)$ be an arbitrary test function and write $\theta=\phi+\psi$ where
\begin{equation*}
    \phi(x)=\frac{\theta(x',x_n)+\theta(x',-x_n)}{2}.
\end{equation*}
It follows that $\psi\in W^{1\,p(x)}_0\left(Q_r(w)\right)$ and also $\psi(x',0)=0$,
so that $\psi\in W^{1,\,p(x)}_0(Q_r^{+}(w))$.
We have
\begin{multline}\label{weakp2}
   \int_{Q_r(w)}|\grad \tilde{u}|^{\tilde{p}(x)-2}\,\grad \tilde{u} \cdot \grad \psi\, dx
    =
    \int_{Q_r^+(w)}|\grad u|^{p(x)-2}\,\grad u \cdot \grad \psi\, dx \\
    % +\int_{Q_1(0)\setminus Q_1^+(0)}|\grad \tilde{u}|^{\tilde{p}(x)-2}\,\grad \tilde{u} \cdot \grad \psi\, dx\\
   % = 0
     +\int_{Q_r(w)\setminus Q_r^+(w)}|\grad (-u(x',-x_n))|^{p(x',-x_n)-2}\,\langle \grad (-u(x',-x_n)) \cdot \grad \psi  \rangle\, dx\\
     =
     -\int_{Q_r^+(w)}|\grad u|^{p(x',x_n)-2}\,\langle \grad u(x',x_n) \cdot \grad \psi(x',-x_n)  \rangle\, dx = 0,
\end{multline}
since $\psi(x',-x_n) \in W^{1,\,p(x)}_0(Q_r^{+}(w))$.
We also have
\begin{multline}\label{weakp3}
    \int_{Q_r(w)}|\grad \tilde{u}|^{\tilde{p}(x)-2}\,\grad \tilde{u} \cdot \grad \phi\, dx
    =\int_{Q_r^+(w)}|\grad u|^{p(x)-2}\,\grad u \cdot \grad \phi\, dx \\
    + \int_{Q_r(w)\setminus Q_r^+(w)}|\grad (-u(x',-x_n))|^{p(x',-x_n)-2}\,\langle \grad (-u(x',-x_n)) \cdot \grad \phi\rangle\, dx\\
     =
     \int_{Q_r^+(w)}|\grad u|^{p(x)-2}\,\grad u \cdot \grad \phi\, dx \\
    - \int_{Q_r^+(w)}|\grad u|^{p(x',x_n)-2}\,\langle \grad u(x',x_n) \cdot \grad \phi(x',-x_n)\rangle\, dx\\
    =
     \int_{Q_r^+(w)}|\grad u|^{p(x)-2}\,\grad u \cdot \grad \phi\, dx \\
    - \int_{Q_r^+(w)}|\grad u|^{p(x',x_n)-2}\,\langle \grad u(x',x_n) \cdot \grad \phi(x',x_n)\rangle\, dx = 0.
\end{multline}
From \eqref{weakp2} and \eqref{weakp3} it follows that
\begin{multline*}\label{weakp3}
\int_{Q_r(w)}|\grad \tilde{u}|^{\tilde{p}(x)-2}\,\grad \tilde{u} \cdot \grad (\phi+\psi)\, dx
= \int_{Q_r(w)}|\grad \tilde{u}|^{\tilde p(x)-2}\,\grad \tilde{u} \cdot \grad \theta\, dx = 0,
\end{multline*}
which completes the proof.
\end{proof}

%%%%%%%%%%%%%%%%%%%%%%%%%%%%%%%%%%%%%%%%%%%%%

\bigskip

We are now ready to prove the following analogue of Theorem \ref{th:u/v Hölder fully nonlinear} for $p(x)$-harmonic functions.

\begin{theorem}\label{th:p(x)-harmonic}
Let $z\in \partial \mathbb R^n_+$ and let $u,v > 0$ be $p(x)$-harmonic functions in $B(z,r)\cap \mathbb{R}^n_+$,
continuous in $\overline{B(z,r)\cap \mathbb{R}^n_+}$, with $u = 0 = v$ on $B(z,r)\cap \{x : x_n = 0\}$.
Then there exists $c$ such that
\bigskip

\begin{equation*}%\label{eq:inf-holder-ratio2}
\left| \frac{u(x)}{v(x)} - \frac{u(y)}{v(y)}\right | \leq c  |x-y|^{\alpha}
\end{equation*}
whenever $x,y \in B(z, r/c) \cap \mathbb R^n_+$ and where $\alpha$ is from Lemma \ref{le:AM}.
Moreover, the result holds with $u$ or $v$ replaced by $d(x, \partial \Omega)$ as well.
\end{theorem}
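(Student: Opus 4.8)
The plan is to reduce Theorem \ref{th:p(x)-harmonic} to the already-established basic lemmas by removing the boundary via reflection. First I would use the Schwarz reflection principle of Lemma \ref{schwarz}: since the boundary portion is flat, fix a small rectangle $Q_\rho^+(z) \subset B(z,r)\cap\mathbb{R}^n_+$ and extend $u$, $v$ to odd reflections $\tilde u$, $\tilde v$, which are $\tilde p(x)$-harmonic in the full rectangle $Q_\rho(z)$. The reflected exponent $\tilde p$ is still bounded, and — this is the one point that needs care — one must check that $\tilde p$ is (Lipschitz) continuous across $\{x_n = 0\}$; this holds provided $p$ itself is assumed Lipschitz (or at least that $p(x',0)$ is attained symmetrically, which is automatic since $\tilde p(x',x_n) = p(x',|x_n|)$ inherits Lipschitz continuity of $p$ up to the flat boundary). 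Granting that, Lemma \ref{le:AM} applies to $\tilde u$ and $\tilde v$ on $Q_\rho(z)$, giving interior $C^{1,\alpha}$ estimates $|\nabla \tilde u(x) - \nabla \tilde u(y)| \le c|x-y|^\alpha$ and likewise for $\tilde v$, valid in particular for all $x,y$ in a slightly smaller half-ball $B(z,\rho/c)\cap\mathbb{R}^n_+$.

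Next I would supply the boundary growth estimates. As remarked in the text just before Lemma \ref{le:AM}, the boundary Harnack inequality for positive $p(x)$-harmonic functions vanishing on a $C^{1,1}$-portion of the boundary is known (from \cite{AL16, AJ17, LOT20}), and in particular Lemma \ref{le:nonlinear_bhi} remains valid in this setting: there is a constant $c$ with $c^{-1} d(x,\partial\mathbb{R}^n_+) \le u(x), v(x) \le c\, d(x,\partial\mathbb{R}^n_+)$ for $x \in B(z,\rho/c)\cap\mathbb{R}^n_+$. Since the boundary here is the flat hyperplane $\{x_n = 0\}$, we have $d(x,\partial\mathbb{R}^n_+) = x_n$, so in particular $d$ is smooth (indeed linear) up to the boundary, and the bounds \eqref{eq:dist-vanish} needed for Lemma \ref{le:second} hold with $C_L = c^{-1}$ and $C_U = c$ for both $u$ and $v$.

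With these two ingredients in hand, the theorem follows by a direct application of Lemmas \ref{le:first} and \ref{le:second}. Concretely: $u$ and $v$ are $C^{1,\alpha}$ up to the flat boundary portion (via the reflection argument above, since the restriction of $\tilde u$ to the closed upper half agrees with $u$), they vanish on $B(z,\rho/c)\cap\{x_n=0\}$, and they satisfy the two-sided linear bound against $d(x,\partial\Omega) = x_n$. Lemma \ref{le:first} then yields that $u/x_n$ and $v/x_n$ are Hölder of order $\alpha$, and Lemma \ref{le:second} (with $f = u$, $g = v$) upgrades this to Hölder continuity of $u/v$, and symmetrically with $u$ or $v$ replaced by $d(x,\partial\Omega)$. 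Renaming the small radius $\rho/c$ back to $r/c$ (after shrinking the original $r$ by a fixed factor) gives the stated conclusion.

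The main obstacle I anticipate is the interface between the two regularity regimes: Lemma \ref{le:AM} is stated for interior balls $B(0,r)$, so one must genuinely carry out the reflection to make it applicable up to $\{x_n=0\}$, and one must confirm that the odd extension $\tilde u$ of a $p(x)$-harmonic function is $C^{1,\alpha}$ at interior points of the reflection hyperplane — which is exactly what Lemma \ref{schwarz} plus Lemma \ref{le:AM} deliver, but only under the Lipschitz hypothesis on $p$ that Lemma \ref{le:AM} requires for $\tilde p$. A minor secondary point is bookkeeping of the various shrinking radii ($r \to \rho \to \rho/c$), which is routine. Everything else is a mechanical invocation of the basic lemmas, exactly as in the proof of Theorem \ref{th:u/v Hölder fully nonlinear}.
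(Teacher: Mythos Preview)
Your proposal is correct and follows essentially the same route as the paper: reflect via Lemma \ref{schwarz} to make the interior $C^{1,\alpha}$ estimate of Lemma \ref{le:AM} available up to the flat boundary, invoke the known boundary Harnack inequality for $p(x)$-harmonic functions to get the two-sided linear bound against $x_n$, and then feed both into Lemmas \ref{le:first} and \ref{le:second}. Your added remark that the reflected exponent $\tilde p$ must inherit Lipschitz continuity for Lemma \ref{le:AM} to apply is a point the paper leaves implicit.
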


\noindent
\begin{proof}
Since $u,v$ are $p(x)$-harmonic in $B(z,r)\cap\mathbb{R}^n_{+}$, then by Lemma~\ref{schwarz} $\tilde{u},\tilde{v}$ (defined in~\ref{mirror}) are $p(x)$-harmonic in $B(z,r)\cap\mathbb{R}^n$.
Thus, by Lemma \ref{le:AM} there exist $c$ and $\alpha\in(0,1]$ such that
\bigskip
\begin{equation}\label{eq:p(x)_1}
    |\grad \tilde{u}(x)-\grad\tilde{u}(y)| + |\grad \tilde{v}(x)-\grad\tilde{v}(y)|\leq c|x-y|^{\alpha}
\end{equation}
in $B(z,r/c)\cap\mathbb{R}^n$, wherefore $\tilde{u}, \tilde{v}\in C^{1,\alpha}\left(B(z,r/c)\cap\overline{\mathbb{R}^{n}_{+}}\right)$.
Moreover, the well known boundary Harnack inequality for $p(x)$-harmonic functions \cite{AL16} %Lemma~\ref{BHIAL}
gives the existence of $c$ such that
\begin{equation}\label{eq:p(x)_2}
    c^{-1} \leq \frac{y(x)}{d(x,\partial \Omega)} \leq c,\quad\forall\,x\in B(z,r/c)
    %\left(B(z,R)\cap\mathbb{R}^n\right)\cap B(z,R/c).
\end{equation}
in which $y = u$ or $y = v$.
%Since $u$ vanishes at the same rate as the distance function $d$, and since $c\geq 1$ we have $\left(B(z,R)\cap\mathbb{R}^n\right)\cap B(z,R/c)=B(z,R/c)\cap\mathbb{R}^n$, hence
Estimates \eqref{eq:p(x)_1} and \eqref{eq:p(x)_2} ensure the assumptions in Lemma~\ref{le:first} and Lemma~\ref{le:second} which therefore give the desired estimates.
\end{proof}

\bigskip

We note that reflection principles for elliptic PDEs and quasiminimizers have been studied, for instance, in \cite{M81, M09}. In addition, $C^{1,\alpha}$ estimates for solutions of PDEs with variable exponents and nonstandard growth conditions are available in, e.g., \cite{S22, V22, BSRR23}.
It is therefore likely that several variants of Theorem \ref{th:p(x)-harmonic} could be derived from Lemma \ref{le:second}, using a combination of known
$C^{1,\alpha}$ estimates, reflection-based extensions, and boundary Harnack inequalities -- the latter potentially provable via Corollary \ref{cor:BHI}.

%%%%%%%%%%%%%%%%%%%%%%%%%%%%%%%%%%%%%%%%%%%%
%%%%%%%%%%%%%%%%%%%%%%%%%%%%%%%%%%%%%%%%%%%%
%%%%%%%%%%%%%%%%%%%%%%%%%%%%%%%%%%%%%%%%%%%%
%%%%%%%%%%%%%%%%%%%%%%%%%%%%%%%%%%%%%%%%%%%%
%%%%%%%%%%%%%%%%%%%%%%%%%%%%%%%%%%%%%%%%%%%%
%%%%%%%%%%%%%%%%%%%%%%%%%%%%%%%%%%%%%%%%%%%%
%%%%%%%%%%%%%%%%%%%%%%%%%%%%%%%%%%%%%%%%%%%%
%%%%%%%%%%%%%%%%%%%%%%%%%%%%%%%%%%%%%%%%%%%%
%%%%%%%%%%%%%%%%%%%%%%%%%%%%%%%%%%%%%%%%%%%%

\subsection{The $\infty$-Laplace equation}
\label{sec:inf-lap}

\noindent
In this section we derive the analogue of Theorem \ref{th:u/v Hölder fully nonlinear} for solutions of the $\infty$-Laplace equation,
that is,
\begin{align}\label{eq:inflapequation}
\Delta_{\infty} u := \sum_{i,j = 1}^{n} \frac{\partial u}{\partial x_i} \frac{\partial u}{\partial x_j} \frac{\partial^2 u}{\partial x_i \partial x_j} = 0,
\end{align}

\noindent
which fails to be elliptic in the sense of \eqref{eq:ass_drift_sub} and \eqref{eq:ass_drift_super}.
We adopt the viscosity solution concept as recalled in Section \ref{sec:decay}, that is, \eqref{eq:viscos1} and \eqref{eq:viscos2} with $F = -\Delta_\infty u$,
and we will refer to a viscosity solution of \eqref{eq:inflapequation} as an $\infty$-harmonic function.

The boundary Harnack inequality for $\infty$-harmonic functions is by now well established in a variety of domains, see, for instance, \cite{Bhatt07,LN08-2,L11} from which we easily conclude the following lemma.

\begin{lemma}\label{le:bhi-infty}
Let $w\in \partial \mathbb R^n_+$, $r > 0$ and $u,v > 0$ be $\infty$-harmonic functions in $B(w,r)\cap \mathbb{R}^n_+$,
continuous on $\overline{B(w,r)\cap \mathbb{R}^n_+}$, with $u = 0 = v$ on $B(w,r)\cap \{x : x_n = 0\}$.
Then there exists $c$, depending only on $n$, such that
\begin{align*}
 c^{-1} \frac{d(x,\partial \Omega)}{r} \leq \frac{u(x)}{u(a_r(w))} \leq c  \frac{d(x,\partial \Omega)}{r}  \quad \text{and} \quad c^{-1} \frac{u(a_r(w))}{v(a_r(w))} \leq \frac{u(x)}{v(x)} \leq c  \frac{u(a_r(w))}{v(a_r(w))},
\end{align*}

\bigskip
\noindent
whenever $x \in \mathbb{R}^n_+ \cap B(w, r/c)$ and where $a_r(w) = w + (0,\dots, r)$.
\end{lemma}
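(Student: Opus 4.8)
The plan is to derive both displayed chains from a single ingredient: the boundary Harnack inequality for positive $\infty$-harmonic functions that vanish on a flat piece of the boundary, which is exactly what is recorded in \cite{Bhatt07,LN08-2,L11}. Since $B(w,r)\cap\{x_n=0\}$ is an (open, flat) portion of the boundary of the bounded Lipschitz domain $B(w,r)\cap\mathbb{R}^n_+$, those results supply a constant $c=c(n)\geq 1$ so that, for any two positive $\infty$-harmonic functions $h_1,h_2$ in $B(w,r)\cap\mathbb{R}^n_+$ that are continuous up to the boundary and vanish on $B(w,r)\cap\{x_n=0\}$,
\begin{align*}
c^{-1}\,\frac{h_1(a_r(w))}{h_2(a_r(w))}\ \leq\ \frac{h_1(x)}{h_2(x)}\ \leq\ c\,\frac{h_1(a_r(w))}{h_2(a_r(w))},\qquad x\in B(w,r/c)\cap\mathbb{R}^n_+ .
\end{align*}

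Applying this with $(h_1,h_2)=(u,v)$ gives at once the second pair of inequalities in the lemma. For the first pair, I would use as a competitor the affine function $v_0(x)=x_n$: it is $\infty$-harmonic in $\mathbb{R}^n_+$ (indeed $D^2v_0\equiv 0$), positive there, continuous up to the boundary, and vanishes on $\{x_n=0\}$, hence admissible. Running the boundary Harnack inequality for $(h_1,h_2)=(u,v_0)$ and inserting $v_0(a_r(w))=r$ and $d(x,\partial\Omega)=x_n$ (here $\Omega=\mathbb{R}^n_+$) yields
\begin{align*}
c^{-1}\,\frac{d(x,\partial\Omega)}{r}\ \leq\ \frac{u(x)}{u(a_r(w))}\ \leq\ c\,\frac{d(x,\partial\Omega)}{r},\qquad x\in B(w,r/c)\cap\mathbb{R}^n_+ ,
\end{align*}
which is the first claim; it is, of course, just the well-known growth estimate $u\approx d$ for $\infty$-harmonic functions near a flat boundary, written with the normalization $u(a_r(w))/r$. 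Taking the larger of the two constants finishes the proof.

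Only two minor points remain, and I expect these to be the whole of the work. First, that $c$ depends on $n$ alone: after translating $w$ to the origin and rescaling by $r$, using the dilation invariance of $\Delta_\infty$ (if $u$ is $\infty$-harmonic then so is $x\mapsto u(rx)$, and both the flat boundary piece and the vanishing condition are preserved), one is reduced to $w=0$, $r=1$, so the constant is the scale-free one from \cite{Bhatt07,LN08-2,L11}. Second, the reference point $a_r(w)$ sits on $\partial B(w,r)$ rather than strictly inside $B(w,r)\cap\mathbb{R}^n_+$; if the cited inequality is phrased with an interior non-tangential point such as $a_{r/2}(w)$, one upgrades to $a_r(w)$ by a short Harnack chain along the vertical segment $\rho\mapsto a_\rho(w)$, $\rho\in[r/2,r]$ (all of whose points lie at distance $\geq r/2$ from $\{x_n=0\}$), invoking the interior Harnack inequality for positive $\infty$-harmonic functions, also contained in \cite{Bhatt07,LN08-2}; the same chain shows $u$ and $v$ do not vanish at $a_r(w)$, so the ratios above are well defined. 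There is no substantive obstacle here, which matches the statement being an easy consequence of the cited results.
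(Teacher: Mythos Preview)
Your proposal is correct and matches the paper's approach: the paper does not give an explicit proof of this lemma but merely states that it follows easily from the boundary Harnack inequalities established in \cite{Bhatt07,LN08-2,L11}. Your plan---applying the cited BHI first with $(h_1,h_2)=(u,v)$ and then with $(u,x_n)$, using scale invariance to secure $c=c(n)$, and a Harnack chain to move the reference point to $a_r(w)$---is exactly the filling-in of details that the paper leaves implicit.
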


\noindent
Here and in the following, we fix the halfspace to $\mathbb R^n_+$, which is w.l.o.g. since \eqref{eq:inflapequation} is invariant under rotations and translations.
Notice that, compared to Lemma \ref{le:nonlinear_bhi}, the boundary Harnack inequality above is stronger, owing to its explicit dependence on
$r$, $u(a_r(w))$ and $v(a_r(w))$. This enhancement stems from the scale invariance and homogeneity of the $\infty$-Laplace equation.

The $C^{1,\alpha}$ regularity of $\infty$-harmonic functions in dimension $n\geq 3$ is a well-known open problem, restricting us to assume $n = 2$. We will make use of the following result,
which follows from \cite{ES08}.

\begin{lemma}\label{le:Evans-Savin}
Suppose $w \in \mathbb{R}^2$, $r > 0$ and that $u$ is a planar $\infty$-harmonic function in $B(w,r)$.
Then there are uniform constants $c$ and $\alpha \in (0,1]$ such that
\begin{align*}%\label{eq:C1alpha}
|\nabla u(x) - \nabla u(y) | \leq c \left(\frac{|x-y|}r\right)^\alpha \sup_{B(w,r)} u
\end{align*}
whenever $x,y \in B(w,r/2)$.
\end{lemma}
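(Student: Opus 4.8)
The statement is, in essence, the interior $C^{1,\alpha}$-estimate of Evans and Savin \cite{ES08} for planar $\infty$-harmonic functions, rewritten in scale-invariant form; the plan is therefore to quote that result on the unit ball and transport it to $B(w,r)$ by translation and scaling. First I would reduce to a normalized configuration. By translation invariance of \eqref{eq:inflapequation} we may assume $w=0$. Since adding a constant to $u$ changes neither \eqref{eq:inflapequation} nor $\nabla u$, we may replace $u$ by $u-\inf_{B(0,r)}u$ and so assume $u\ge 0$ on $B(0,r)$; set $M:=\sup_{B(0,r)}u$. If $M=0$ then $u\equiv 0$ and there is nothing to prove, so assume $M>0$.

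Next I would pass to the rescaled function $v(y):=u(ry)/M$ for $y\in B(0,1)$. From $\nabla v(y)=(r/M)\,\nabla u(ry)$ and $D^2v(y)=(r^2/M)\,D^2u(ry)$ one computes $\Delta_\infty v(y)=(r^4/M^3)\,\Delta_\infty u(ry)$, so $v$ is $\infty$-harmonic in $B(0,1)$ with $0\le v\le 1$. Applying the Evans--Savin theorem on $B(0,1)$ then yields universal constants $c_0$ and $\alpha\in(0,1]$, independent of $u$, $r$ and $w$, such that $|\nabla v(\xi)-\nabla v(\eta)|\le c_0\,\|v\|_{L^\infty(B(0,1))}\,|\xi-\eta|^\alpha\le c_0\,|\xi-\eta|^\alpha$ for all $\xi,\eta\in B(0,1/2)$.

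Finally I would undo the scaling: given $x,y\in B(w,r/2)$, write $x=r\xi$, $y=r\eta$ (after the translation), so that $\xi,\eta\in B(0,1/2)$ and $|\xi-\eta|=|x-y|/r$; hence
\[
|\nabla u(x)-\nabla u(y)|=\frac{M}{r}\,|\nabla v(\xi)-\nabla v(\eta)|\le \frac{c_0 M}{r}\left(\frac{|x-y|}{r}\right)^{\alpha},
\]
which gives the asserted inequality with $M=\sup_{B(w,r)}u$, the factor $r^{-1}$ and $c_0$ absorbed into the constant $c$.

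The main obstacle is not in any of these steps: the entire substance lies in the Evans--Savin estimate itself, whose proof is delicate and genuinely two-dimensional, resting on a blow-up/compactness argument together with rigidity of tangent functions of planar $\infty$-harmonic functions and structural features available only when $n=2$ (the $C^{1,\alpha}$-problem for $n\ge 3$ being open — precisely why the surrounding results are confined to the plane). Beyond quoting \cite{ES08} correctly, the only points that need a little care are the harmless normalization by a constant used to pass from $\|u\|_{L^\infty}$ to $\sup u$ in the statement, and the bookkeeping of the powers of $r$ and $M$ under $v(y)=u(ry)/M$; both are routine.
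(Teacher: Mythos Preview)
Your approach is exactly the paper's: the lemma is not proved there but simply stated as following from Evans--Savin \cite{ES08}, and your reduction by translation, subtraction of a constant, and dilation is the standard way to pass from the unit-ball estimate to the scale-invariant form.

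There is one genuine bookkeeping slip. Your rescaling correctly produces
\[
|\nabla u(x)-\nabla u(y)|\le \frac{c_0 M}{r}\left(\frac{|x-y|}{r}\right)^{\alpha},
\]
but the factor $r^{-1}$ cannot be ``absorbed into the constant $c$'': $r$ is a free parameter, not a universal constant. In fact a glance at dimensions shows the inequality as stated in the lemma is off by exactly this factor (the left side scales like $[u]/[\text{length}]$, the right like $[u]$). In the paper the lemma is only invoked after normalizing to $r\approx 1$, so the discrepancy is harmless in context, but you should not claim to have matched the stated form; rather, your computation exposes that the statement needs the extra $r^{-1}$. A second, smaller point you already flag: after your shift, $M$ equals the oscillation of the original $u$, not its supremum, so the final constant depends on $\mathrm{osc}_{B(w,r)}u$ rather than $\sup_{B(w,r)}u$; again harmless in the paper's application (reflected functions with $\sup u=\|u\|_{L^\infty}$), but worth stating precisely.
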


We will also make use of the following well-known Harnack-type inequalities.

\begin{lemma}\label{le:harnack}
Suppose $w \in \mathbb{R}^n$, $r > 0$ and that $u$ is a positive $\infty$-harmonic function in $B(0,2r)$.
Then there is $c$, depending only on $n$, such that
\begin{align*}
(i) \quad \sup_{B(w, r)} u \leq c \inf_{B(w, r)} u.
\end{align*}
Moreover, let $w \in \partial \mathbb{R}_+^n$, $r > 0$, and $u$ be a positive $\infty$-harmonic function in $B(w, r) \setminus \partial \mathbb{R}_+^n$,
continuous in $B(w, r)$ with $u = 0$ on $B(w, r) \cap \partial \mathbb{R}_+^n$.
Then there exists $c$, depending only on $n$, such that
\begin{eqnarray*}
(ii) \quad \sup_{B(w, r/c)} u   \leq   c \,u(a_{r/c}(w)),
\end{eqnarray*}
where $a_{r/c}(w) = w + (0,\dots, r/c)$.
\end{lemma}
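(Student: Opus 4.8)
The plan is to establish $(i)$ first and then bootstrap it, together with the boundary condition, to obtain $(ii)$. For part $(i)$, the Harnack inequality for positive $\infty$-harmonic functions on balls is classical; it follows, for instance, from the fact that $\infty$-harmonic functions are exactly absolutely minimizing Lipschitz extensions, so that comparison with cones applies. Concretely, fix $x,y \in B(w,r)$; apply comparison with cone functions of the form $z \mapsto a + b|z - \xi|$ on suitable balls contained in $B(0,2r)$ to bound $u(x)/u(y)$ by a dimensional constant. Alternatively, one may simply cite the standard reference for the Harnack inequality for $\infty$-harmonic functions (e.g. Lindqvist--Manfredi, or Bhattacharya), since the statement is by now textbook. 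The constant depends only on $n$ because the only geometric input is the ratio of radii, which is fixed at $2$.

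For part $(ii)$, the idea is to chain together interior Harnack with the vanishing on $\partial \mathbb{R}^n_+$. Set $a = a_{r/c}(w) = w + (0,\dots,r/c)$. First, choose $c$ large enough that the ball $B(w, r/c')$ (for an intermediate constant $c'$) is comfortably inside $B(w,r)$, and cover the ``half-ball'' $B(w, r/c)\cap \mathbb{R}^n_+$ by a bounded number (depending only on $n$) of interior balls, each centered at a point at distance comparable to its radius from $\partial \mathbb{R}^n_+$, so that part $(i)$ applies on each and consecutive balls overlap. A Harnack chain then gives $\sup_{B(w,r/c)\cap \mathbb{R}^n_+ \cap \{x_n > \delta\}} u \leq c\, u(a)$ for points bounded away from the hyperplane; here one should make sure the chain can be taken through $a$ itself. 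To handle points close to $\partial \mathbb{R}^n_+$, one uses the upper bound coming from comparison with a linear function vanishing on the hyperplane: since $u = 0$ on $B(w,r)\cap \partial\mathbb{R}^n_+$ and $u$ is bounded above on, say, $B(w, r/2)$ by its maximum there (which by the first step is $\lesssim u(a)$ after adjusting constants), the comparison principle for $\infty$-harmonic functions against the affine barrier $x \mapsto C x_n$ forces $u(x) \leq C x_n \leq C\, r/c$ near the hyperplane; combined with the interior estimate one concludes $\sup_{B(w,r/c)\cap\mathbb{R}^n_+} u \leq c\, u(a_{r/c}(w))$.

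The main obstacle is the bookkeeping of constants in part $(ii)$: one must verify that the number of balls in the Harnack chain, the admissible sizes of the intermediate balls so they stay inside $B(w,r)$ and stay at a fixed fraction of their radius from the boundary, and the constant in the affine barrier comparison, all depend only on $n$ and not on $r$ or on $u$. This is a scaling argument — rescale so that $r = 1$ — but it requires care to make the chain pass through $a_{r/c}(w)$ while keeping every ball in the chain at bounded distance-to-radius ratio from $\partial \mathbb{R}^n_+$. The barrier comparison step itself is routine given the comparison principle for $\infty$-harmonic functions (affine functions are $\infty$-harmonic, hence admissible comparison functions).
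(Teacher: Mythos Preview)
Your treatment of $(i)$ is fine and matches the paper: both simply cite the standard Harnack inequality for $\infty$-harmonic functions (the paper points to Lindqvist--Manfredi and Lewis--Nystr\"om).

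For $(ii)$, your ingredients are the right ones --- Harnack chains for interior points and a Lipschitz/barrier estimate near the hyperplane --- and this is exactly what the paper invokes by citing the Caffarelli--Fabes--Mortola--Salsa argument. However, your assembly of these ingredients is circular. You write that ``$u$ is bounded above on, say, $B(w, r/2)$ by its maximum there (which by the first step is $\lesssim u(a)$)'', but your ``first step'' (Harnack chaining) only controls points with $x_n > \delta$; the supremum over $B(w,r/2)\cap\mathbb{R}^n_+$ could a priori be attained arbitrarily close to the hyperplane, where no chain of bounded length reaches. Consequently you cannot set up the affine barrier $C x_n$ with $C$ already comparable to $u(a)/r$: the comparison only yields $C \lesssim \frac{1}{r}\sup_{B(w,r/2)} u$, which is precisely the quantity you are trying to bound. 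This is a structural gap, not merely the ``bookkeeping of constants'' you flag.

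The standard fix --- and what the paper means by ``the proof in \cite{CFMS81} applies in our situation'' --- is an iteration over dyadic scales. The boundary Lipschitz estimate (a separate input, obtained e.g.\ from comparison with cones) gives $u(x) \lesssim \frac{x_n}{\rho}\sup_{B(w,\rho)} u$; hence any near-maximizer close to the hyperplane forces the supremum on a smaller ball to drop by a fixed factor, while a near-maximizer far from the hyperplane is controlled directly by $u(a)$ via a Harnack chain. Iterating these two alternatives yields the Carleson estimate $(ii)$. This iterative step is what your sketch is missing.
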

\noindent
{\bf Proof.} For the Harnack inequality in $(i)$, see e.g. \cite{lin-man, LN08-2}.
For $(ii)$,
a proof for linear elliptic partial differential equations
in Lipschitz domains can be found in \cite{CFMS81}.
The proof uses only the Harnack chain condition, $(i)$,
 the well-known Lipschitz continuity up to the boundary as well as the comparison principle in Lemma \ref{jamforelseprin} below.
In particular, the proof in \cite{CFMS81} applies in our situation. $\hfill \Box$

\bigskip

To make use of the interior $C^{1,\alpha}$ estimate in Lemma \ref{le:Evans-Savin} we extend the $\infty$-harmonic functions over the boundary by Schwarz reflection.
To state the lemma, recall the notation in \eqref{rectangel} and observe that
since the infinity Laplace equation is invariant under translations and scalings, it is enough to work with
$Q^+ := Q^+_1(0)$, $Q^- := Q^-_1(0)$ and $Q := Q_r(w)$.
We now prove the following.

\begin{lemma}\label{le:schwarz-inf}
Let $u$ be infinity harmonic in $Q^+$.
Then the function defined by
\begin{align*}
 \tilde u(x)=\begin{cases}
 u (x) & \mbox{if $x_n \geq 0$} \\[\medskipamount]
-u (x', -x_n) & \mbox{if $x_n < 0$},
\end{cases}
\end{align*}
is infinity harmonic in $Q$.
\end{lemma}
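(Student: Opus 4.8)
The plan is to verify that $\tilde u$ is simultaneously a viscosity subsolution and supersolution of $\Delta_\infty \tilde u = 0$ in $Q$, using the symmetry of the equation under the reflection $R(x',x_n) = (x',-x_n)$ together with the odd reflection used to define $\tilde u$. Away from the hyperplane $\{x_n = 0\}$ there is nothing to prove: for $x_n>0$ the claim is the hypothesis, and for $x_n<0$ it follows because $\Delta_\infty$ is invariant under the map $w \mapsto -w\circ R$ (the two sign changes, one from the reflection and one from the minus sign, cancel in the cubic expression $\sum w_i w_j w_{ij}$). So the only issue is at points $x_0 = (x_0',0)$ on the hyperplane.

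First I would treat the subsolution property. Let $\varphi \in C^2(Q)$ touch $\tilde u$ from above at $x_0=(x_0',0)$; normalize so $\varphi(x_0)=\tilde u(x_0)=u(x_0',0)=0$. The standard trick is to \emph{symmetrize} the test function: set $\psi(x',x_n) = \tfrac12\bigl(\varphi(x',x_n) - \varphi(x',-x_n)\bigr)$, which is $C^2$, odd in $x_n$, and still touches $\tilde u$ from above at $x_0$ (because $\tilde u$ is odd in $x_n$ while $\psi$ inherits the one-sided contact from $\varphi$ on $\{x_n\ge 0\}$ and, by oddness, on $\{x_n\le 0\}$ too). Since $\psi$ is odd in $x_n$, its restriction to $x_n\ge 0$ touches $u$ from above at $x_0$; one checks that an odd-in-$x_n$ $C^2$ function has $\partial_{x_n x_n}\psi(x_0',0)=0$ and $\partial_{x_i}\psi(x_0',0)=0$ for $i<n$, so that $\Delta_\infty\psi(x_0)$ reduces to $(\partial_{x_n}\psi)^2 \partial_{x_n x_n}\psi = 0$ at $x_0$; hence the subsolution inequality $-\Delta_\infty\psi(x_0) \le 0$ holds trivially with equality. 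To be careful about whether $\psi$ actually qualifies as an admissible test function for $u$ at a \emph{boundary} point of $Q^+$, I would instead push the contact point slightly into $Q^+$ or, cleaner, observe that $u$ is continuous up to $\{x_n=0\}$ with $u=0$ there, and use the viscosity inequalities only at interior points combined with a limiting argument; alternatively invoke that $\infty$-harmonic functions are characterized by comparison with cones, which makes the boundary bookkeeping transparent. The supersolution property at $x_0$ is handled by the mirror-image argument, testing from below and using the same symmetrization.

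The cleanest route, and the one I would actually write, replaces test-function gymnastics with \emph{comparison with cones}: $u$ is $\infty$-harmonic in $Q^+$ iff for every ball $B\subset\subset Q^+$ and every cone function $C(x)=a+b|x-z|$ one has the two-sided comparison on $\partial B$ implying it inside. One shows $\tilde u$ enjoys comparison with cones in every ball $B\subset Q$: if $B$ avoids the hyperplane it is immediate by reflection symmetry; if $B$ meets the hyperplane, split $B$ into $B^+=B\cap\{x_n>0\}$ and $B^-=B\cap\{x_n<0\}$, note $\tilde u = 0$ on $B\cap\{x_n=0\}$ while any cone $C$ satisfies $C(x',x_n)\ge -C(x',-x_n)$ type inequalities after recentering, and combine the comparisons on $B^\pm$ (whose boundaries lie in $\partial B \cup \{x_n=0\}$) with the oddness of $\tilde u$; the sign change makes the hyperplane piece compatible. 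The main obstacle is precisely this gluing across $\{x_n=0\}$: one must ensure that a cone comparison valid separately on the two half-balls, with the shared face contributing the value $0$ for $\tilde u$, assembles into a comparison on the full ball — which works because the cone, after the odd reflection, either stays above or below $\tilde u$ on that face by its own convexity/concavity in $x_n$. Once comparison with cones is established in all of $Q$, $\tilde u$ is $\infty$-harmonic in $Q$ by the Crandall--Evans--Gariepy / Aronsson characterization, completing the proof. $\hfill\Box$
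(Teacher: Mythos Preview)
Your treatment away from the hyperplane is fine and matches the paper. The problem is at $x_0=(x_0',0)$, where your symmetrization step breaks down. You claim that $\psi(x',x_n)=\tfrac12\bigl(\varphi(x',x_n)-\varphi(x',-x_n)\bigr)$ still touches $\tilde u$ from above at $x_0$, but this is false: from $\varphi\ge\tilde u$ and the oddness $\tilde u(x',-x_n)=-\tilde u(x',x_n)$ you only get that $\psi-\tilde u$ is \emph{half a nonnegative quantity minus half a nonnegative quantity}, with no sign control. Concretely, in one variable with $\tilde u(t)=2t$ and $\varphi(t)=2t+t^2-t^3$ (which touches $2t$ from above at $0$), the odd part is $\psi(t)=2t-t^3$, which lies \emph{below} $u$ for small $t>0$. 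So $\psi$ is not an admissible test function, and in any case showing $\Delta_\infty\psi(x_0)=0$ says nothing about $\Delta_\infty\varphi(x_0)$, which is what you must control.

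The paper avoids this detour entirely: it works directly with the original test function. Since $\tilde u(\cdot,0)\equiv 0$ and $\varphi$ touches from above (resp.\ below), $\varphi(\cdot,0)$ has an extremum at $x_0'$, forcing $\varphi_{x_i}(x_0)=0$ for $i<n$; hence $\Delta_\infty\varphi(x_0)=\varphi_{x_n}(x_0)^2\,\varphi_{x_nx_n}(x_0)$. The sign of $\varphi_{x_nx_n}(x_0)$ then comes from oddness: touching from above gives $\varphi(x_0',t)+\varphi(x_0',-t)\ge 0$ with equality at $t=0$, whose Taylor expansion is $\varphi_{x_nx_n}(x_0)t^2+o(t^2)$, so $\varphi_{x_nx_n}(x_0)\ge 0$; the supersolution case is the mirror image. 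This is exactly the information your symmetrization was trying to extract, but applied to the right function.

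Your alternative via comparison with cones is plausible, but the sketch leaves the only nontrivial step --- gluing across $\{x_n=0\}$ --- unjustified. The sentence about the cone ``after the odd reflection'' staying on the correct side ``by its own convexity/concavity in $x_n$'' is not an argument; you would need to show, for an arbitrary ball $B\subset Q$ meeting the hyperplane and a cone $C$ with $\tilde u\le C$ on $\partial B$, that $C\ge 0$ on $B\cap\{x_n=0\}$ (so that comparison on $B^+$ applies), and the analogous statement on $B^-$. This can be done, but it is not automatic and you have not done it.
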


\noindent
\begin{proof}
We show the supersolution property (the subsolution property is shown similarly).
Choose an arbitrary point $\bar x\in Q^-$ and a testfunction $\psi\in C^2(Q^-)$ such that $\tilde{u}-\psi$ has a local minimum at $\bar x$.
We need to show that
\begin{align}\label{eq:wändegis}
\bigtriangleup_{\infty} \psi(\bar x) \leq 0.
\end{align}
We build $\tilde \psi(x)= -\psi(x',-x_n)$ to be a test function, reflected in the same way as $u$.
Then, by construction, $u(x)-\tilde\psi(x)$ has a local maximum at $(\bar x,-\bar x_n) \in Q^+$.
Since $u$ is a viscosity subsolution in $Q^+$ we have
\begin{align}\label{eq:jahapp}
\bigtriangleup_{\infty}\tilde\psi(\bar x',-\bar x_n) \geq 0.
\end{align}
Now, for $i = 1\dots n-1$, we have
\begin{align*}
\psi_{x_i}(\bar x) = - \tilde \psi_{x_i}(\bar x',-\bar x_n), \quad %\text{and} \quad
\psi_{x_n}(\bar x) = \tilde \psi_{x_n}(\bar x',-\bar x_n),
\end{align*}
and, for $i, j = 1\dots n-1$,
\begin{align*}
\psi_{x_i,x_j}(\bar x) &= - \tilde \psi_{x_i,x_j}(\bar x',-\bar x_n),  \quad
\psi_{x_i,x_n}(\bar x) = \tilde \psi_{x_i,x_n}(\bar x',-\bar x_n),\\
\psi_{x_n,x_n}(\bar x) &= - \tilde \psi_{x_n,x_n}(\bar x',-\bar x_n),
\end{align*}
we conclude
\begin{align*}
\bigtriangleup_{\infty}\psi(\bar x)
&=
\sum_{i,j = 1}^{n-1} \psi_{x_i}(\bar x) \psi_{x_j}(\bar x) \psi_{x_i,x_j}(\bar x)\\
&+ 2 \sum_{i = 1}^{n-1} \psi_{x_i}(\bar x) \psi_{x_n}(\bar x) \psi_{x_i,x_n}(\bar x)
+ \psi^2_{x_n}(\bar x)\psi_{x_n,x_n}(\bar x)\\
&=
\sum_{i,j = 1}^{n-1} (-\tilde \psi_{x_i}(\bar x',-\bar x_n))(-\tilde\psi_{x_j}(\bar x',-\bar x_n)) (-\tilde\psi_{x_i,x_j}(\bar x',-\bar x_n))\\
&+ 2 \sum_{i = 1}^{n-1} (-\tilde\psi_{x_i}(\bar x',-\bar x_n) \tilde\psi_{x_n}(\bar x',-\bar x_n) \tilde\psi_{x_i,x_n}(\bar x',-\bar x_n)\\
&+ \tilde\psi^2_{x_n}(\bar x',-\bar x_n)(-\tilde\psi_{x_n,x_n}(\bar x',-\bar x_n))\\
&= -\bigtriangleup_{\infty}\tilde\psi(\bar x',-\bar x_n).
\end{align*}
Together with \eqref{eq:jahapp} this implies \eqref{eq:wändegis} and therefore we have ensured that $\tilde{u}$ is a viscosity supersolution of \eqref{eq:inflapequation} in $Q \setminus \{x : x_n = 0\}$.

It thus remains to show that $\tilde u$ is a viscosity supersolution on the plane $\{x : x_n = 0\}$.
Pick $\bar x$ in this plane and let $\psi\in C^2(Q)$ be a testfunction such that $\tilde u-\psi$ has a local minimum at $\bar x$.
We need to show that \eqref{eq:wändegis} holds also in this situation.
Now, since $\tilde u(x',0) = 0$ on the plane, we have, for $i = 1\dots n-1$,
\begin{align*}
\psi_{x_i}(\bar x) = 0, \quad %\text{and} \quad
%
%\psi_{x_n}(\bar x) = \tilde \psi_{x_n}(\bar x',-\bar x_n),
\end{align*}
so that
\begin{align*}
\bigtriangleup_{\infty}\psi(\bar x) =
\psi^2_{x_n}(\bar x)\psi_{x_n,x_n}(\bar x)
\end{align*}
Thus, it suffices to show that $\psi_{x_n,x_n}(\bar x) \leq 0$.
By the ''mirroring'' construction of $\tilde{u}$ it follows that $\tilde u$ must be concave either for $x_n \geq 0$ or for $x_n \leq 0$.
Moreover, the local minimum of $\tilde u-\varphi$ implies that the same must hold also for $\varphi$.
Assume now that $\psi_{x_n,x_n}(\bar x) > 0$.
Then, since $\psi$ is $C^2$ there exists $\epsilon >0$ such that
$$
\psi_{x_n}(\bar x',-\epsilon) < \psi_{x_n}(\bar x) < \psi_{x_n}(\bar x',\epsilon)
$$
which contradicts concavity.
Therefore, $\psi_{x_n,x_n}(\bar x) \leq 0$, \eqref{eq:wändegis} holds true and we have proved that $\hat u$ is a viscosity supersolution in $Q$.
By mimicking the above for the subsolution property we obtain that $\tilde u$ is infinity harmoinic in $Q$.
The proof is complete.
\end{proof}

\bigskip

We are now ready to establish then main result of this subsection, that is the H\"older continuity of quotients of positive planar $\infty$-harmonic functions, vanishing on a hyperplane of dimension $1$.

\begin{theorem}\label{th:holder-boundary-inf-plane}
Let $w\in \partial \mathbb R^2_+$, $r > 0$ and suppose that $u,v > 0$ are $\infty$-harmonic in $B(w,r)\cap \mathbb{R}_+^2$,
continuous on $\overline{B(w,r)\cap \mathbb{R}_+^2}$, with $u = 0 = v$ on $B(w,r)\cap \partial \mathbb R^2_+$. Then there exist $c$ and $\alpha \in (0,1]$ such that
\begin{equation*}%\label{eq:inf-holder-ratio}
\left| \frac{u(x)}{v(x)} - \frac{u(y)}{v(y)}\right | \leq c \frac{u(y)}{v(y)} \left (\frac{ |x-y|}{r}\right)^{\alpha}
\end{equation*}
whenever $x,y \in B(w, r/c) \cap \mathbb R^2_+$.
\end{theorem}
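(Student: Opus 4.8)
The plan is to combine the Schwarz reflection principle (Lemma~\ref{le:schwarz-inf}), the interior $C^{1,\alpha}$ estimate for planar $\infty$-harmonic functions (Lemma~\ref{le:Evans-Savin}), the boundary Harnack inequality for $\infty$-harmonic functions (Lemma~\ref{le:bhi-infty}), and the purely geometric Lemmas~\ref{le:first} and~\ref{le:second}. The one technical subtlety, relative to the $p(x)$-case (Theorem~\ref{th:p(x)-harmonic}), is that the $C^{1,\alpha}$ estimate in Lemma~\ref{le:Evans-Savin} carries a scale-dependent factor $\sup u$ on the right, so I must carefully localize and use the Harnack inequalities (Lemma~\ref{le:harnack}) to control that factor by $u(a_\rho(w))$ on a suitable scale; this is what produces the extra multiplicative factor $u(y)/v(y)$ in the conclusion, which is stronger than the additive $C^\alpha$ bound in the earlier theorems.

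First I would, since~\eqref{eq:inflapequation} is invariant under rotations and translations, reduce to $w = 0$, and rescale so that $r = 1$. By Lemma~\ref{le:schwarz-inf}, the Schwarz reflections $\tilde u,\tilde v$ are $\infty$-harmonic in a full ball $B(0,\rho)\cap\mathbb R^2$ for $\rho$ comparable to $1$ (working inside the rectangle $Q^+$ on which the reflection lemma is stated, then covering $B(0,1/c)\cap\mathbb R^2_+$ by such rectangles). Next I apply Lemma~\ref{le:Evans-Savin} to $\tilde u$ and $\tilde v$ on balls of radius comparable to the distance to $\partial\mathbb R^2_+$ is not needed here---since the reflections are $\infty$-harmonic across the plane, I may apply Lemma~\ref{le:Evans-Savin} on a fixed ball $B(0,\rho')$ to obtain
\begin{align*}
|\nabla \tilde u(x) - \nabla \tilde u(y)| \leq c\,|x-y|^{\alpha}\, \sup_{B(0,\rho')}\tilde u, \qquad x,y \in B(0,\rho'/2),
\end{align*}
and similarly for $\tilde v$. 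By oddness of $\tilde u$ and Lemma~\ref{le:harnack}(ii) applied to $u$ (and the elementary fact that $\sup_{B(0,\rho')}|\tilde u| = \sup_{B(0,\rho')\cap\mathbb R^2_+}u$), the factor $\sup\tilde u$ is comparable to $u(a_{\rho'}(0))$, which by the interior Harnack inequality Lemma~\ref{le:harnack}(i) is in turn comparable to $u(a_{1/c}(0))$ for any fixed interior reference point; hence the constant $c_K$ in~\eqref{eq:C1_alpha} may be taken of the form $c\,u(a_{1/c}(0))$, and likewise $c\,v(a_{1/c}(0))$ for $v$.

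Then I invoke Lemma~\ref{le:bhi-infty}: it gives $c^{-1} d(x,\partial\Omega) \le u(x) \le c\, d(x,\partial\Omega)$ after normalizing by $u(a_{1/c}(0))$, and the analogue for $v$, so the hypotheses~\eqref{eq:dist-vanish} of Lemma~\ref{le:second} hold with $c_L, c_U$ comparable to $v(a_{1/c}(0))$ and $u(a_{1/c}(0))$ respectively. Feeding the $C^{1,\alpha}$ bounds (with their $u(a),v(a)$-dependent constants) and~\eqref{eq:dist-vanish} into Lemma~\ref{le:second} yields
\begin{align*}
\left| \frac{u(x)}{v(x)} - \frac{u(y)}{v(y)} \right| \le c\, \frac{u(a_{1/c}(0))}{v(a_{1/c}(0))}\, |x-y|^{\alpha}
\end{align*}
on $B(0,1/c)\cap\mathbb R^2_+$, and the second boundary Harnack inequality in Lemma~\ref{le:bhi-infty} lets me replace $u(a_{1/c}(0))/v(a_{1/c}(0))$ by $c\,u(y)/v(y)$ for any $y$ in the relevant region. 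Undoing the rescaling restores the factor $(|x-y|/r)^\alpha$, giving the claimed estimate. The main obstacle I anticipate is bookkeeping the scale-dependent constants consistently---making sure the ball on which reflection is valid, the ball on which Lemma~\ref{le:Evans-Savin} is applied, and the ball on which Lemma~\ref{le:bhi-infty} holds are all comparable, and that all the $u(a_\rho(0))$'s appearing at different scales are mutually comparable via the Harnack chain---rather than any single hard estimate.
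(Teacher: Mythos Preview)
Your proposal is correct and follows essentially the same route as the paper: reflect via Lemma~\ref{le:schwarz-inf}, apply the interior $C^{1,\alpha}$ estimate of Lemma~\ref{le:Evans-Savin}, control the $\sup u$ factor by Lemma~\ref{le:harnack}, invoke Lemma~\ref{le:bhi-infty} for the growth bounds, feed into Lemmas~\ref{le:first}--\ref{le:second}, and rescale. The only cosmetic difference is that the paper additionally normalizes $u(e_n)=v(e_n)=1$ at the outset (using homogeneity of the $\infty$-Laplacian), which makes all the constants uniform and spares the explicit bookkeeping of the $u(a),v(a)$ factors you carry through; your tracking of these is equivalent and yields the same result.
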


We remark that the inequality in Theorem \ref{th:holder-boundary-inf-plane} is sometimes written as
$$
\left | \log	{\frac{u(x)}{v(x)}} - \log {\frac{u(y)}{v(y)}}\right | \leq c \left (\frac{ |x-y|}{r}\right)^{\alpha}
$$
and it is stronger than the versions in Theorem \ref{th:u/v Hölder fully nonlinear} and Theorem \ref{th:p(x)-harmonic} as we here have eliminated all dependence in the constants so that $c$ and $\alpha$ are uniform.
This was possible thanks to the availability of Harnacks inequality, the scale invariance of the infinity Laplace equation and the simple geometry.
We will give some corollaries of Theorem \ref{th:holder-boundary-inf-plane} after presenting the proof.

\bigskip

\noindent
{\bf Proof of Theorem \ref{th:holder-boundary-inf-plane}}.
By translating, scaling and normalizing we may w.l.o.g. assume $w=0$ and $r= 2$ and $u(e_n) = v(e_n) = 1$.
Then $u$, $v$ are infinity harmonic in $Q^+$ and, by using Lemma \ref{le:schwarz-inf} we extend $u$, $v$ to be infinity harmonic in $Q$.
We now apply the interior $C^{1,\alpha}$ estimate in Lemma \ref{le:Evans-Savin} and Lemma \ref{le:harnack} to obtain
\begin{align}\label{eq:reg_inf}
|\nabla y(x) - \nabla y(y) | \leq c |x-y|^\alpha \sup_{B(0,1/c)} y \leq c |x-y|^\alpha \, y(c^{-1} e_n) \leq c |x-y|^\alpha
\end{align}
whenever $x,y \in B(0,1/c)$ and for $y = u$ and $y = v$.
Moreover, the boundary Harnack inequality in Lemma \ref{le:bhi-infty} yields
\begin{align}\label{eq:bhi_inf}
c^{-1} x_2 \leq y(x) \leq c x_2 %\quad \text{and} \quad c^{-1} \frac{u(x)}{v(x)} \leq c,
\end{align}
whenever $x,y \in B(0,1/c)$ and for $y = u$ and $y = v$.
Lemma \ref{le:first}, Lemma \ref{le:second}, \eqref{eq:reg_inf} and \eqref{eq:bhi_inf} now imply
\begin{align*}%\label{eq:final-sista-gången}
\left|\frac{u(x)}{v(x)} - \frac{u(y)}{v(y)}\right| \leq  c |x - y|^{\alpha}
\end{align*}
whenever $x,y \in B(0,1/c)$, and where $\alpha$ is from Lemma \ref{le:Evans-Savin}.
Scaling everything back and using Harnacks inequality and the boundary Harnack inequality once more yield the desired inequality.
The proof is complete. $\hfill\Box$\\

Next, we derive a boundary gradient estimate stating that $\nabla u \approx u/d$ near the boundary. %which is a kind of gradient boundray Harnack inequality
In particular, Theorem \ref{th:holder-boundary-inf-plane} implies the following corollary.

\begin{corollary}\label{cor:grad-inf-est}
Let $w\in \partial \mathbb R^2_+$, $r > 0$ and suppose that $u,v > 0$ are $\infty$-harmonic in $B(w,r)\cap \mathbb{R}_+^2$,
continuous on $\overline{B(w,r)\cap \mathbb{R}_+^2}$, with $u = 0 = v$ on $B(w,r)\cap \partial \mathbb R^2_+$.
%Suppose that $u$ is a positive $\infty$-harmonic function in $Q^+_1(0)$,
%continuous on the closure of $Q^+_1(0)$, and that $u = 0$ on $\partial Q^+_1(0) \cap\{yn = 0\}$.
Then there exists $c$ such that
$$
c^{-1}\frac{u(x)}{x_n} \leq |\nabla u| \leq c\frac{u(x)}{x_n} \quad \text{whenever} \quad x \in B(w,r/c)\cap \mathbb{R}_+^2. %Q^+_{1/c}(0).
$$
\end{corollary}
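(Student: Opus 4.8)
\noindent
The plan is to combine the ingredients already used in the proof of Theorem~\ref{th:holder-boundary-inf-plane} -- Schwarz reflection across the flat boundary, the interior planar $C^{1,\alpha}$ estimate of Lemma~\ref{le:Evans-Savin}, and the boundary Harnack inequality of Lemma~\ref{le:bhi-infty} -- with the elementary observation that the claimed estimate is scale invariant. First I would note that the quantity $|\nabla u(x)|\,x_n/u(x)$ is unchanged under translations, under dilations $x\mapsto\mu x$, and under $u\mapsto\lambda u$; invoking the invariance of \eqref{eq:inflapequation} under the first two of these maps, it suffices to prove the corollary after normalizing so that $w=0$, $r=2$ and $u(a_1(0))=1$, where $a_1(0)=(0,1)$. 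Since Lemma~\ref{le:bhi-infty} then yields $c^{-1}x_2\le u(x)\le c x_2$ for $x\in B(0,1/c)\cap\mathbb{R}^2_+$, the task reduces to establishing the two-sided bound $c^{-1}\le|\nabla u(x)|\le c$ on a possibly smaller such ball, with $c$ depending only on $n=2$.

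For the upper bound I would apply the interior gradient estimate for planar $\infty$-harmonic functions (a consequence of Lemma~\ref{le:Evans-Savin}, or of classical comparison with cones) in the ball $B(x,x_2/2)\subset\mathbb{R}^2_+$, followed by Harnack's inequality (Lemma~\ref{le:harnack}(i)) in $B(x,x_2)$, to get $|\nabla u(x)|\le \frac{c}{x_2}\sup_{B(x,x_2/2)}u\le\frac{c}{x_2}\,u(x)$; note that this step uses neither the reflection nor the boundary Harnack inequality. For the lower bound I would reflect $u$ across $\{x_2=0\}$ by Lemma~\ref{le:schwarz-inf} to obtain an $\infty$-harmonic $\tilde u$ on the full cube $Q$, and then argue exactly as for \eqref{eq:reg_inf} (Lemma~\ref{le:Evans-Savin} together with Lemma~\ref{le:harnack}) that $\nabla u$ extends continuously -- in fact $\alpha$-H\"older -- up to the flat part of the boundary, with $|\nabla u(x)-\nabla u(y)|\le c|x-y|^\alpha$ on $B(0,1/c)\cap\overline{\mathbb{R}^2_+}$. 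On the boundary, $u\equiv 0$ on $\{x_2=0\}$ forces $\partial_1 u(x_1,0)=0$, while $\partial_2 u(x_1,0)=\lim_{t\to0^+}u(x_1,t)/t\ge c^{-1}$ by Lemma~\ref{le:bhi-infty}; hence $|\nabla u(x_1,0)|\ge c^{-1}$. Propagating this inward along the vertical segment from $(x_1,0)$ to $x=(x_1,x_2)$ with the H\"older modulus just obtained gives $|\nabla u(x)|\ge c^{-1}-c\,x_2^\alpha\ge\tfrac12 c^{-1}$ once the ball $B(0,1/c)$ is taken small enough, and since $u(x)\le c x_2$ this yields $|\nabla u(x)|\ge c^{-1}u(x)/x_2$. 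Undoing the normalization by invariance then completes the proof.

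The main obstacle is the lower bound near the boundary, and more precisely the need to know that $\nabla u$ is continuous (indeed H\"older continuous) up to the flat boundary: this is exactly what the Schwarz reflection of Lemma~\ref{le:schwarz-inf} combined with the planar interior regularity of Lemma~\ref{le:Evans-Savin} supplies, and it is the reason the argument is restricted to $n=2$ and to locally flat boundaries. The boundary Harnack inequality is then used only to fix the size of the normal derivative on the boundary. The upper bound, by contrast, is a routine interior estimate, and keeping track of the (universal, dimension-dependent) constants throughout is straightforward.
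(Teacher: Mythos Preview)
Your argument is correct. The upper bound is a standard interior-gradient-plus-Harnack step, and for the lower bound you combine the Schwarz reflection (Lemma~\ref{le:schwarz-inf}), the planar $C^{1,\alpha}$ estimate (Lemma~\ref{le:Evans-Savin}), and the boundary Harnack inequality (Lemma~\ref{le:bhi-infty}) to pin down $|\nabla u|$ on the flat boundary and then propagate inward via H\"older continuity. The normalization by scale and translation invariance cleanly reduces everything to a fixed-size configuration with uniform constants.

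The paper's route is different in emphasis: it invokes the arguments of \cite[Lemmas 3.18 and 3.25]{LLuN}, which take as input Harnack's inequality, the H\"older continuity of the ratio $u/v$ with $v(x)=x_n$ (Theorem~\ref{th:holder-boundary-inf-plane}), and the fact that $x_n$ is itself $\infty$-harmonic. In other words, the paper uses Theorem~\ref{th:holder-boundary-inf-plane} as a black box for the lower gradient bound, whereas you bypass that theorem and go back to its main ingredient -- the boundary $C^{1,\alpha}$ regularity obtained by reflection -- and argue directly at the level of the gradient. Your approach is slightly more elementary in that it does not first pass through the quotient estimate, but the underlying machinery (reflection, planar Evans--Savin regularity, boundary Harnack) is the same; the paper's approach has the advantage of fitting into a general template from \cite{LLuN} that applies once one knows $u/v\in C^\alpha$.
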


\noindent
\begin{proof}
This follows by the arguments in the proofs of \cite[Lemma 3.18 and Lemma 3.25]{LLuN}.
Indeed, these proofs uses only Harnack's inequality, the analogue of Theorem \ref{th:holder-boundary-inf-plane}, and the fact that $x_n$ solves the PDE; thus, the proofs work also in our situation.
\end{proof}

%%%%%%%%%%%%%%%%%%%%%%%%%%%%%%
%%%%%%%%%%%%%%%%%%%%%%%%%%%%%%
%%%%%%%%%%%%%%%%%%%%%%%%%%%%%%

\subsection{Applications of $u/v \in C^{\alpha}$ in unbounded domains}
\label{sec:applications-of-u/v-reg}

In this section we derive a number of corollaries through scaling and comparison arguments together with certain explicit solutions and the H\"older continuity of the quotients of solutions of PDEs.
We will obtain estimates of a $p$-harmonic measures, $p \in (1, \infty]$, as well as versions of Phragmen-Lindel\"of theorem and uniqueness (modulo normalization) of $p$-harmonic functions, including $p = \infty$.

For $p \in (1,\infty)$, the $p$-Laplace equation yields
\begin{align*}%\label{eq:plapequation}
\Delta_{p} u :=\nabla \cdot ( |\nabla u |^{p - 2} \nabla  u ) = 0,
\end{align*}

\noindent
and we say that $u$ is a \textit{ (weak) subsolution (supersolution)} to the $p$-Laplace equation in
a domain $\Omega$ provided $u \in W_{loc}^{1,p}(\Omega)$ and
\begin{equation*}
\int\limits_{\Omega} | \nabla  u |^{p - 2}  \, \langle   \nabla  u , \nabla \theta  \rangle \, dx \leq (\geq) \,0,
\end{equation*}
whenever $\theta \in C^{\infty}_0(\Omega)$ is non-negative. A function $u$ is a \textit{(weak) solution} of the
$p$-Laplacian if it is both a subsolution and a supersolution.
Here, as in the sequel,
$W^{1,p}(\Omega)$ is the Sobolev space of those $p$-integrable functions whose first
distributional derivatives are also $p$-integrable,
%$W^{1,q}_0(\Omega)$ denotes the set of functions in $W^{1,p}(\Omega)$ with compact support in $\Omega$.
and $C^\infty_0(\Omega)$ is the set of infinitely differentiable functions with compact support in $\Omega$.
If $u$ is an upper semicontinuous subsolution to the $p$-Laplacian in $\Omega$, $p \in (1, \infty]$
then we say that $u$ is \textit{$p$-subharmonic} in $\Omega$.
If $u$ is a lower semicontinuous supersolution to the $p$-Laplacian in $\Omega$, $p \in (1, \infty]$,
then we say that $u$ is \textit{$p$-superharmonic} in $\Omega$.
If $u$ is a continuous solution to the $p$-Laplacian in $\Omega$, $p \in (1, \infty]$, then $u$ is
\textit{$p$-harmonic} in $\Omega$.

To introduce the $p$-harmonic measure, let $\Omega\subset \mathbf{R}^n$ be a regular bounded domain and let $f$ be a real-valued continuous function defined on $\partial{\Omega}$.
It is well known that there exists a unique smooth function $u$, harmonic in $\Omega$,
such that $u = f$ continuously on $\partial \Omega$. The maximum principle and the Riesz representation theorem yield the following representation formula for $u$,
\begin{equation*}
u(z)=\int_{\partial\Omega}f(w) \,{\omega}^{z}(w) \text{, \quad whenever\; $z\in \Omega$.}
\end{equation*}
Here, $\omega^z(w) = \omega(dw, z, \Omega)$ is referred to as the harmonic measure at $z$ associated to the Laplace operator.
As the harmonic measure allows us to solve the Dirichlet problem, its properties are of fundamental
interest in classical potential theory.

In this section we will obtain estimates of the following generalization of harmonic measure related to the $p$-Laplace equation.

%%%%%%%%%%%%%%%%%%%%%%%%%%%%%%%%%%%%%%%%%%%%
%%%%%%%%%%% p-harmonic measure %%%%%%%%%%%%%
%%%%%%%%%%%%%%%%%%%%%%%%%%%%%%%%%%%%%%%%%%%%

%
\begin{definition} \label{def:p-hmeas}
Let $G \subseteq \mathbf{R}^n$ be a domain, $E \subseteq \partial{G}$, $p \in (1,\infty)$
and $x \in G$.
The $p$-harmonic measure of $E$ at $x$ with respect to $G$, denoted by $\omega_p(E,x,G)$,
is defined as $\inf_{u} u(x)$,
where the infimum is taken over all $p$-superharmonic functions $u \ge 0$ in $G$
such that $\liminf_{z \to y} u(z) \ge 1$, for all $y \in E$.
\end{definition}
\noindent
The $\infty$-harmonic measure is defined similar, but with $p$-superharmonicity replaced by absolutely minimizing, see
%, see Peres--Schramm--Sheffield--Wilson
\cite[pages 173--174]{inf-tow}.
It turns out that the $p$-harmonic measure in Definition \ref{def:p-hmeas} is a
$p$-harmonic function in $\Omega$, bounded below by $0$ and bounded above by $1$.
For these and other basic properties of $p$-harmonic measure we refer the reader to
%Heinonen--Kilpel\"ainen--Martio
\cite[Chapter 11]{HKM}.

We will make use of the following well known comparison principle for $p$-harmonic functions, $p \in (1, \infty]$.

%%%%%%%%%%%%%%%%%%%%%%%%%%%%%%%%%%%%%%%%%%%%
%%%%%%%%%%%%%%%%%%%%%%%%%%%%%%%%%%%%%%%%%%%%
%%%%%%%%%%%%%%%%%%%%%%%%%%%%%%%%%%%%%%%%%%%%

\begin{lemma}\label{jamforelseprin}
Let $p \in (1,\infty]$ be given, $u$ be $p$-subharmonic and $v$ be $p$-superharmonic in a bounded domain $\Omega$.
If
\begin{equation*}
\limsup_{x\to y} u(x) \leq \liminf_{x\to y} v(x)
\end{equation*}
for all $y \in \partial \Omega$, and if both sides of the above inequality are not simultaneously $\infty$ or $-\infty$,
then $u \leq v$ in $\Omega$.
\end{lemma}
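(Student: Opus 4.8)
The plan is to treat the variational range $p\in(1,\infty)$ by an energy/monotonicity argument and the borderline case $p=\infty$ by viscosity-solution techniques. For $p<\infty$, I would first reduce to genuine weak sub/supersolutions lying in $W^{1,p}_{loc}(\Omega)$: replace $v$ by $\min(v,k)$ and $u$ by $\max(u,-j)$ (these truncations are again a weak supersolution, resp. subsolution, by standard potential theory, see \cite[Chapter 11]{HKM}), prove the comparison for the truncated pair, and then let $k,j\to\infty$; the hypothesis that the two boundary limits are not simultaneously $+\infty$ or $-\infty$ is precisely what makes this limiting procedure legitimate. So one may assume $u,v$ are finite-valued weak sub/supersolutions. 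The boundary condition $\limsup_{x\to y}u(x)\le\liminf_{x\to y}v(x)$ together with semicontinuity forces, for every $\varepsilon>0$, the set $\{u-v>\varepsilon\}$ to be compactly contained in $\Omega$; hence $\varphi:=(u-v-\varepsilon)^+$ is a nonnegative function in $W^{1,p}(\Omega)$ with compact support, so $\varphi\in W^{1,p}_0(\Omega)$.

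Next I would test. By density the defining inequalities extend to nonnegative $\varphi\in W^{1,p}_0(\Omega)$, so using $\varphi$ as test function in the subsolution inequality for $u$ and in the supersolution inequality for $v$ and subtracting gives
$$\int_\Omega\big\langle |\nabla u|^{p-2}\nabla u-|\nabla v|^{p-2}\nabla v,\;\nabla\varphi\big\rangle\,dx\le 0 .$$
Since $\nabla\varphi=(\nabla u-\nabla v)\,\mathbf{1}_{\{u-v>\varepsilon\}}$ a.e., and since $a\mapsto|a|^{p-2}a$ is strictly monotone on $\mathbb{R}^n$, the integrand is pointwise $\ge 0$; combined with the displayed inequality it must vanish a.e., whence $\nabla u=\nabla v$ a.e.\ on $\{u-v>\varepsilon\}$, i.e.\ $\nabla\varphi\equiv 0$. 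As $\varphi\in W^{1,p}_0(\Omega)$ this yields $\varphi\equiv0$, so $u-v\le\varepsilon$ throughout $\Omega$; letting $\varepsilon\downarrow0$ and then removing the truncations gives $u\le v$.

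For $p=\infty$ there is no energy to exploit, and I would instead run Jensen's comparison argument for the $\infty$-Laplacian. Regularize $u$ from above by the sup-convolution $u^\delta(x)=\sup_y\big(u(y)-\tfrac1{2\delta}|x-y|^2\big)$, which is semiconvex and, on a slightly shrunken subdomain, a viscosity subsolution of $\Delta_\infty w\ge-\theta(\delta)$ with $\theta(\delta)\to0$; symmetrically regularize $v$ from below by the inf-convolution $v_\delta$, a semiconcave viscosity supersolution of $\Delta_\infty w\le\theta(\delta)$. If $\sup_\Omega(u-v)>0$, then for $\delta$ small $\sup(u^\delta-v_\delta)>0$ is attained at an interior point; after adding a small quadratic penalization one applies Jensen's maximum-principle lemma for semiconvex functions to produce a point at which both $u^\delta$ and $v_\delta$ are twice differentiable with $D^2u^\delta\le D^2v_\delta$, and substituting into the two inequalities contradicts the strict gap $2\theta(\delta)$ once $\delta$ is small. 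Letting $\delta\to0$ finishes this case; this is precisely the comparison theorem underlying the theory recalled in \cite[pages 173--174]{inf-tow}.

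The main obstacle is the $p=\infty$ case: the clean energy argument is unavailable, and one must go through the full viscosity machinery -- sup/inf convolutions and Jensen's lemma -- so in a self-contained write-up this part is by far the most delicate. A secondary technical point in the range $p<\infty$ is the reduction from possibly infinite-valued $p$-super/subharmonic functions to genuine weak super/subsolutions via truncation and monotone limits, which is exactly where the ``not simultaneously $\pm\infty$'' hypothesis enters.
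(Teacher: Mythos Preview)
Your sketch is correct and in fact goes well beyond what the paper does: the paper's ``proof'' of this lemma consists entirely of citations, namely \cite[Theorem~7.6]{HKM} for $p\in(1,\infty)$ and \cite{J93,AS10} for $p=\infty$. Your energy/monotonicity argument for finite $p$ is precisely the content of the HKM reference (modulo the truncation step, which is indeed how HKM passes from their comparison-based notion of $p$-superharmonic to genuine weak supersolutions), and your sup/inf-convolution outline for $p=\infty$ is Jensen's original proof \cite{J93}. The paper also cites Armstrong--Smart \cite{AS10}, which gives a shorter route to the $\infty$-case via ``comparison with cones'' rather than the full Jensen machinery, so if you want a lighter self-contained write-up for $p=\infty$ that is an alternative worth knowing. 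But nothing in your proposal is wrong or misguided; you have simply unpacked what the paper leaves to references.
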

\noindent
\begin{proof}
If $p \in (1, \infty)$, this result follows from \cite[Theorem~7.6]{HKM}.
For the case $p = \infty$, see e.g. \cite{J93,AS10}.
\end{proof}

\bigskip

In the following subsections, we derive growth estimates for the $p$-harmonic measure, for $p$-subharmonic functions and $p$-harmonic functions in different geometric settings, $p \in (1,\infty]$. We will also consider uniqueness (modulo normalization) of $p$-harmonic functions.

%%%%%%%%%%%%%%%%%%%%%%%%%%%%%%%%%%%%%%%%%%%%
%%%%%%%%%%%%%%%%%%%%%%%%%%%%%%%%%%%%%%%%%%%%
%%%%%%%%%%%%%%%%%%%%%%%%%%%%%%%%%%%%%%%%%%%%

\subsubsection{Infinity-harmonic functions in half-spaces}
\label{sec:inf-spaces-tjohejsan}

With the boundary estimate in Theorem \ref{th:holder-boundary-inf-plane} for $\infty$-harmonic functions at hand,
we can use the explicit solution $u(x)=x_2$ to obtain simple proofs of the probably well-known statements in the following theorem.
See \cite{BM21} for a similar result on $g$-harmonic functions.
Here and in the following, we use the notation
\begin{align*}
M(R)=\underset{x\in \mathbb R^n}{\underset{|x|=R}{\sup}} u(x)
\end{align*}
and we let $\omega_\infty(x) = \omega_\infty(\mathbb R^2_+ \cap \partial B(w,R),x,\mathbb R^2_+ \cap B(0,R))$ be the $\infty$-harmonic measure of $\mathbb R^2_+ \cap \partial B(0,R)$ at $x=(x_1,x_2)$ w.r.t $\mathbb R^2_+ \cap B(0,R)$.

\begin{theorem}\label{thm:case3}
%Assume $n=2, p=\infty$, and $\Omega = \mathbb R^2_+$.

\vspace{0.3cm}
\textbf{a)} There exists $c$ such that
$$
c^{-1} \frac{x_2}{R} \leq \omega_\infty(x)\leq c  \frac{x_2}{R}, \quad \text{whenever} \quad x \in B(0, R/c) \cap \mathbb R^2_+
$$
%whenever $x \in B(0, R/c) \cap \mathbb R^2_+$.

\vspace{0.3cm}
\textbf{b)}
If $u$ is an $\infty$-subharmonic function in $\mathbb R^2_+$ such that
\begin{align*}
\limsup_{x \to y} u(x) \leq 0 \quad \text{for each}\quad y\in \partial \mathbb{R}_+^2, %y=(x_1,0),
\end{align*}
then either $u \leq 0$ in $\mathbb R^2_+$ or
\begin{align*}
\liminf_{R \to \infty} \frac{M(R)}{R} >0.
\end{align*}

\vspace{0.3cm}
\textbf{c)} Assume that $u$ is a positive $\infty$-harmonic function in $\mathbb R^2_+$ satisfying $u(x) =0$ continuously on $\partial \mathbb{R}_+^2$. %$\{x=(x_1,x_2):x_2=0\}$.
Then there exist a constant $C$ such that
$$
u(x)= C x_2.
$$
\end{theorem}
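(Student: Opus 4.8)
The plan is to prove the three parts separately, all resting on Lemma~\ref{le:bhi-infty}, Theorem~\ref{th:holder-boundary-inf-plane}, the comparison principle Lemma~\ref{jamforelseprin}, and the scale invariance of $\Delta_\infty$, and using throughout that $v(x)=x_2$ is a positive $\infty$-harmonic function in $\mathbb R^2_+$ vanishing on $\partial\mathbb R^2_+$. For part a), I would note that $\omega_\infty$ is a positive $\infty$-harmonic function in $\mathbb R^2_+\cap B(0,R)$ vanishing continuously on $\partial\mathbb R^2_+\cap B(0,R)$, and apply Lemma~\ref{le:bhi-infty} in $B(0,r)\cap\mathbb R^2_+$ for $r<R$ (so that the Carleson point $a_r(0)=(0,r)$ is interior and $\omega_\infty$ is continuous up to the relevant boundary), giving
$$ c^{-1}\,\frac{x_2}{r}\,\omega_\infty(a_r(0)) \le \omega_\infty(x) \le c\,\frac{x_2}{r}\,\omega_\infty(a_r(0)), \qquad x\in B(0,r/c)\cap\mathbb R^2_+ . $$
Letting $r\uparrow R$ and using $\omega_\infty\le 1$ yields the upper bound; taking $r=R/2$ and using that, by scale invariance of the infinity Laplacian, $\omega_\infty(a_{R/2}(0))$ equals the corresponding quantity for the unit half-ball -- a positive dimensional constant -- yields the lower bound. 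Relabelling $c$ gives $c^{-1}x_2/R\le\omega_\infty(x)\le c\,x_2/R$ on $B(0,R/c)\cap\mathbb R^2_+$ with $c=c(n)$.

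For part b), the plan is a Phragmén--Lindelöf comparison against $\omega_\infty$. Suppose $u(x_0)>0$ for some $x_0\in\mathbb R^2_+$ and fix $R>c|x_0|$ with $c$ as in part a). Using the hypothesis $\limsup_{x\to y}u(x)\le 0$ at the two corner points $(\pm R,0)$, $u$ extends upper semicontinuously to the closed arc $\{|x|=R\}\cap\overline{\mathbb R^2_+}$, so $M(R)<\infty$; and $M(R)>0$, since otherwise $\limsup u\le 0$ on all of $\partial(\mathbb R^2_+\cap B(0,R))$ and Lemma~\ref{jamforelseprin} would force $u\le 0$ in $\mathbb R^2_+\cap B(0,R)$, contradicting $u(x_0)>0$. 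The function $M(R)\,\omega_\infty$ is $\infty$-harmonic (positive scalings preserve $\infty$-harmonicity), equals $M(R)$ on $\mathbb R^2_+\cap\partial B(0,R)$ and $0$ on $\partial\mathbb R^2_+\cap B(0,R)$, and has nonnegative liminf at the corners; comparing boundary values with $u$ and applying Lemma~\ref{jamforelseprin} gives $u\le M(R)\,\omega_\infty$ in $\mathbb R^2_+\cap B(0,R)$. Evaluating at $x_0$ and invoking part a),
$$ u(x_0)\le M(R)\,\omega_\infty(x_0)\le c\,M(R)\,\frac{(x_0)_2}{R}, $$
hence $M(R)/R\ge u(x_0)/(c\,(x_0)_2)>0$ for all large $R$, which is the desired conclusion.

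For part c), I would apply Theorem~\ref{th:holder-boundary-inf-plane} with $w=0$, the given $u$, and $v(x)=x_2$ -- both positive and $\infty$-harmonic in $\mathbb R^2_+\cap B(0,R)$ and vanishing on $\partial\mathbb R^2_+\cap B(0,R)$. For any fixed $x,y\in\mathbb R^2_+$ and all $R$ large enough that $x,y\in B(0,R/c)$,
$$ \left|\frac{u(x)}{x_2}-\frac{u(y)}{y_2}\right| \le c\,\frac{u(y)}{y_2}\left(\frac{|x-y|}{R}\right)^{\alpha}, $$
whose right-hand side tends to $0$ as $R\to\infty$. Therefore $u(x)/x_2$ is constant on $\mathbb R^2_+$; denoting the constant by $C$ -- which is positive since $u>0$ -- gives $u(x)=C\,x_2$.

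I do not expect any of the three parts to require a genuinely new idea; the work is essentially bookkeeping. The one point demanding care is part a): $\omega_\infty$ is only defined on $\mathbb R^2_+\cap B(0,R)$ and fails to be continuous at the corners $(\pm R,0)$, which is why Lemma~\ref{le:bhi-infty} must be applied on the subdomains $B(0,r)\cap\mathbb R^2_+$ with $r<R$ and the limit $r\uparrow R$ taken afterwards, the reference value $\omega_\infty(a_{R/2}(0))$ being pinned down by scaling. In part b) one must likewise confirm the boundary comparison at $(\pm R,0)$, but there $\limsup_{x\to y}u(x)\le 0\le\liminf_{x\to y}M(R)\,\omega_\infty(x)$, so Lemma~\ref{jamforelseprin} applies without modification.
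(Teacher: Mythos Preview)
Your proposal is correct and follows essentially the same route as the paper: part a) via Lemma~\ref{le:bhi-infty} applied to $\omega_\infty$ (the paper simply calls this ``immediate''), part b) by a Phragm\'en--Lindel\"of comparison against $M(R)\,\omega_\infty$ (the paper cites \cite[Thm 11.11]{HKM} for this step), and part c) by applying Theorem~\ref{th:holder-boundary-inf-plane} with $v(x)=x_2$ and sending $R\to\infty$. Your additional care about the corner points $(\pm R,0)$ and the choice $r<R$ in part a) is a reasonable way to make the paper's terse argument rigorous; your scaling observation $\omega_\infty(a_{R/2}(0))=\omega_\infty^{(1)}((0,1/2))$ for the lower bound is exactly the right way to pin down the reference value.
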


%\noindent
%{\bf Proof.}
% $\hfill \Box$\\
%
\noindent
\begin{proof}
\textbf{a)} This is an immediate consequence of the well-known boundary Harnack inequality for $\infty$-harmonic functions recalled in Lemma \ref{le:bhi-infty}.

\vspace{0.3cm}
\textbf{b)} This follows from \textbf{a)} together with a direct application of the Phragmen-Lindelöf principle, see \cite[Thm 11.11, p. 206]{HKM}.

\vspace{0.3cm}
\textbf{c)}
Let $v(x) = x_2$, and $u$ be an arbitrary positive $\infty$-harmonic function in $\mathbb{R}^2_+$, vanishing on the boundary, and apply Theorem \ref{th:holder-boundary-inf-plane} to obtain the
existence of uniform constants $c$ and $\alpha \in (0,1]$ such that
\begin{equation*}%\label{eq:inf-holder-ratio}
\left| \frac{u(x)}{x_2} - \frac{u(y)}{y_2}\right | \leq c \,\frac{u(y)}{y_2} \left (\frac{ |x-y|}{R}\right)^{\alpha}
\end{equation*}
whenever $x,y\in B(0, R/c) \cap \mathbb R^2_+$.
Since $R > 0$ is arbitrary, the result follows by letting $R \to \infty$.
\end{proof}

%%%%%%%%%%%%%%%%%%%%%%%%%%%%%%%%%%%%%%%%%%%%
%%%%%%%%%%%%%%%%%%%%%%%%%%%%%%%%%%%%%%%%%%%%
%%%%%%%%%%%%%%%%%%%%%%%%%%%%%%%%%%%%%%%%%%%%

\subsubsection{$p$-harmonic functions in extended sectors} %in $\mathbb R^n$}
\label{sec:sectors}

We now consider positive $p$-harmonic functions. $p \in (1,\infty)$,
vanishing on the boundary of prolonged planar sectors, and therby we generalize some results from \cite{LS23}.
In particular, consider the domain
\begin{equation} \label{def:Snu}
\mathcal S^n_\nu =\{ x=(r,\phi, z) := (r,\phi, z_1, \dots, z_{n-2})   \in \mathbb{R}^n : r > 0, |\phi| <\frac{\pi}{2\nu}, z \in\mathbb{R}^{n-2}\},
\end{equation}
where $ \nu\in(1/2, \infty)$. $\mathcal S^n_\nu$ corresponds to extending a $2$-dimensional sector independently in $n-2$ directions.
In what follows, we will suppress the index $n$ and simply write $\mathcal{S}_\nu$.
Note that $\mathcal S_\nu$ is a Lipschitz domain, and thus
we may apply the well known H\"older continuity of quotients of $p$-harmonic functions in \cite{LN10}, which implies the following version, tailored for our situation.
Note in particular that since the Lipschitz-constant of $\mathcal S_\nu$ does note change when scaling, and since the $p$-Laplace equation is invariant under scalings, we obtain the freedom to send $R \to \infty$ in the below version.

\begin{lemma}
\label{lemma:Hölder_of_ratio_LN}
Let $p \in (1, \infty)$ and $R > 0$.
Suppose that $u$ and $v$ are positive $p$-harmonic functions in $\mathcal{S}_\nu \cap B(0, R)$, continuous on $\overline{\mathcal{S}_\nu} \cap B(0, R)$ with $u = v = 0$ on $\partial \mathcal{S}_\nu \cap B(0, R)$.
Then there exist constants $c$ and $\sigma\in (0,1)$,
both depending only on $p$, $n$, and $\nu$, such that
$$
\left | \log{\frac{u(x)}{v(x)}} - \log {\frac{u(y)}{v(y)}}\right | \leq c \left ( \frac{|x-y|}{R}\right)^\sigma
$$
whenever $x,y \in \Omega \cap B(0, \frac{R}{c})$.
\end{lemma}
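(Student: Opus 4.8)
The plan is to remove the dependence on $R$ by a dilation and then invoke the known H\"older regularity of quotients of positive $p$-harmonic functions vanishing on a portion of the boundary of a Lipschitz domain, as established in \cite{LN10}. The two facts that make this reduction work are that $\mathcal S_\nu$ is a Lipschitz domain whose Lipschitz character depends only on $n$ and $\nu$, and that both $\mathcal S_\nu$ and the $p$-Laplace equation are invariant under the dilations $x \mapsto \lambda x$, $\lambda > 0$.

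First I would record that $\mathcal S_\nu$ is Lipschitz. In the planar slice it is a sector of opening angle $\pi/\nu \in (0,2\pi)$ -- this is precisely where the restriction $\nu > 1/2$ enters -- and such a sector is a Lipschitz domain with Lipschitz constant bounded in terms of $\nu$ alone; the product structure in the remaining $z$-variables does not affect this. Moreover $\lambda \mathcal S_\nu = \mathcal S_\nu$ for every $\lambda>0$, so the Lipschitz character is scale invariant.

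Next, given $u,v$ as in the statement, I would set $\tilde u(x) = u(Rx)$ and $\tilde v(x) = v(Rx)$. Then $\tilde u,\tilde v$ are positive $p$-harmonic in $\mathcal S_\nu \cap B(0,1)$, continuous up to the boundary, and vanish on $\partial \mathcal S_\nu \cap B(0,1)$. Since $0 \in \partial \mathcal S_\nu$, the boundary H\"older estimate for quotients in Lipschitz domains from \cite{LN10}, applied at the boundary point $0$ with radius $1$, yields constants $c \ge 1$ and $\sigma \in (0,1)$ depending only on $p$, $n$ and the Lipschitz character of $\mathcal S_\nu$ -- hence only on $p$, $n$, $\nu$ -- such that
\[
\left| \log\frac{\tilde u(x)}{\tilde v(x)} - \log\frac{\tilde u(y)}{\tilde v(y)} \right| \le c\,|x-y|^\sigma \qquad \text{whenever } x,y \in \mathcal S_\nu \cap B(0,1/c).
\]
Undoing the scaling, i.e. writing $x = x'/R$, $y = y'/R$ and using $\log(\tilde u/\tilde v)(x'/R) = \log(u/v)(x')$ together with $|x-y| = |x'-y'|/R$, gives exactly the asserted inequality on $\mathcal S_\nu \cap B(0,R/c)$.

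The scaling step is routine; the step requiring care is matching hypotheses with the cited result. One should confirm that the H\"older-ratio theorem in \cite{LN10} is (or can be restated as) the scale-invariant estimate above, with constants independent of the radius and valid at boundary points that may be corners of the defining Lipschitz graph (the vertex of $\mathcal S_\nu$ being such a point), and that positivity together with continuous vanishing on the relevant boundary portion is all that is assumed there; the passage between the ratio form and the $\log$ form is harmless because the boundary Harnack inequality in Lipschitz domains bounds $u/v$ above and below. If the statement in \cite{LN10} is given only for bounded Lipschitz domains, one simply intersects $\mathcal S_\nu$ with a fixed large ball before scaling, which changes nothing since the estimate is local. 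Once these bookkeeping points are settled, the lemma follows.
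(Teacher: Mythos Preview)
Your proposal is correct and follows essentially the same approach as the paper: rescale by $R$ to reduce to the unit ball, invoke the H\"older-ratio theorem from \cite{LN10} (specifically Theorem~2 there) using that $\mathcal S_\nu$ is a Lipschitz domain with scale-invariant Lipschitz character, and then scale back. The paper's proof is slightly terser but otherwise identical in structure and content.
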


\noindent
\begin{proof}
Define $\tilde u(x) = u(Rx)$, $\tilde v(x) = v(Rx)$ and observe that then $\tilde u$ and $\tilde v$ are positive $p$-harmonic functions in $\mathcal{S}_\nu \cap B(0, 1)$, continuous on $\overline{\mathcal{S}_\nu} \cap B(0, 1)$ with $\tilde u = \tilde v = 0$ on $\partial \mathcal{S}_\nu \cap B(0, 1)$.
Since $\mathcal{S}_\nu$ is a Lipschitz domain with Lipschitz constant independent of scaling, and since the $p$-Laplacian is invariant under scalings, we may apply
\cite[Theorem 2]{LN10}
to obtain constants $c$ and $\sigma \in (0,1)$, depending only on $p$, $n$ and $\nu$, such that
\begin{equation}\label{eq:scaled-proof}
\left | \log{\frac{\tilde u(x)}{\tilde v(x)}} - \log {\frac{\tilde u(y)}{\tilde v(y)}}\right | \leq c \left ( {|x - y|} \right)^\sigma,
\end{equation}
whenever $x,y \in \Omega \cap B(0, \frac{1}{c})$.

Let $y_1, y_2  \in  \mathcal{S}_\nu$ and suppose $R > 0$ is so large that $y_1, y_2 \in  \mathcal{S}_\nu \cap B(0,\frac{R}{c})$.
Define $\tilde y_1 =\frac{y_1}{R}$ and $\tilde y_2 = \frac{y_2}{R}$ so that $\tilde y_1, \tilde y_2 \in B(w, \frac{r_0}{c})$ which together with \eqref{eq:scaled-proof} imply
\begin{equation*}
\left | \log {\frac{u(y_1)}{v(y_1)}} - \log {\frac{u(y_2)}{ v(y_2)}}\right|
= \left | \log{\frac{\tilde u(\tilde y_1)}{\tilde v(\tilde y_1)}} - \log {\frac{\tilde u(\tilde y_2)}{\tilde v(\tilde y_2)}}\right | \leq c \left ({|\tilde y_1-\tilde y_2|} \right)^\sigma = c \left ( \frac {| y_1 - y_2|} R \right)^\sigma.
\end{equation*}
%
%Now, sending $R \to \infty$ completes the proof.
The proof is complete.
\end{proof}

\bigskip
\noindent
We next recall the following explicit solutions of the $p$-Laplace equation in $\mathcal{S}_\nu$;

\begin{lemma} \label{lemma:explicit1}
For any  $\nu \in [\frac{1}{2},\infty)$ and $p \in (1,\infty)$  there exists a function $f_{\nu,p}: [0,\pi] \to \mathbb R $ such that
\begin{equation} \label{eq:explicit1}
h(x) =h(r,\phi, z)= r^{k} f_{\nu,p} (\phi)
\end{equation}
is $p$-harmonic in $\mathcal S_\nu$, where,
\begin{equation}\label{eq:radialexponent}
k:=k(\nu,p)=\frac{(v-1)\sqrt{(1-2v)(p-2)^2+v^2p^2}+(2-p)(1-2v)+v^2p}{2(p-1)(2v-1)}.
\end{equation}
Moreover, the following properties hold:
\begin{enumerate}
\item $f_{\nu,p}$ is continuous and differentiable on $(-\frac{\pi}{2\nu}, \frac{\pi}{2\nu})$
\item $f_{\nu,p}(\pm \frac{\pi}{2k}) =0$
\item $|f_{\nu,p}(\phi)|, |f_{\nu,p}^\prime(\phi)| \leq c$, \mbox{whenever $\phi \in [-\frac{\pi}{2\nu}, \frac{\pi}{2k}]$}
\item $|f_{\nu,p}^\prime(\phi)| \geq c^{-1}$ and $f_{\nu,p} \geq c^{-1}$,  \mbox{whenever $\phi \in [-\frac{\pi}{2\nu}, \frac{\pi}{2k}] \setminus [-\frac{\pi}{4\nu}, \frac{\pi}{4\nu}] $}
\end{enumerate}
\end{lemma}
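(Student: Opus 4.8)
The plan is to reduce the statement to a one‑dimensional problem by separation of variables in polar coordinates, integrate the resulting angular equation once to obtain a closed‑form profile $f_{\nu,p}$, and then read the properties (1)--(4) off that profile.

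First I would observe that, since we seek a solution $h(r,\phi,z)=r^{k}f_{\nu,p}(\phi)$ independent of $z=(z_1,\dots,z_{n-2})$, its gradient lies in the $(x_1,x_2)$-plane and $|\nabla h|$ depends only on $(r,\phi)$; hence $|\nabla h|^{p-2}\nabla h$ has no $z$-components, its $n$-dimensional divergence coincides with its two-dimensional one, and $h$ is $p$-harmonic in $\mathcal S_\nu$ precisely when $r^{k}f(\phi)$ is $p$-harmonic in the planar wedge $\{(r,\phi):r>0,\ |\phi|<\tfrac{\pi}{2\nu}\}$. Writing $\Delta_p$ in polar coordinates and inserting $h=r^{k}f(\phi)$, one finds that $\Delta_p h$ factors as $r^{(k-1)(p-2)+k-2}\,(k^2f^2+f'^2)^{(p-4)/2}$ times a second-order autonomous expression in $f,f',f''$. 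For a nontrivial solution $f$ and $f'$ cannot vanish at the same point (otherwise the leading-order balance in the equation fails), so $k^2f^2+f'^2>0$ throughout, $\nabla h\neq0$ in the interior, and $\Delta_p h=0$ is equivalent to the nonlinear ODE
\[
\big[(p-1)f'^2+k^2f^2\big]f''+k\big(k(p-1)-(p-2)\big)\big(k^2f^2+f'^2\big)f+(p-2)k^2ff'^2=0 .
\]

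Since this equation is autonomous and of second order, I would integrate it once, either by the standard ``pseudo-angular'' change of variables for planar $p$-harmonic functions (as in \cite{LS23} and the classical literature on separated $p$-harmonic solutions in wedges) or, more directly, by the substitution $f''=f'\,df'/df$, which turns it into a first-order equation solvable in closed form. Normalizing the even solution by $f'(0)=0$ and $f(0)>0$, it stays positive until it first vanishes at some angle $\phi^{*}=\phi^{*}(k,p)$; the requirement $\phi^{*}=\tfrac{\pi}{2\nu}$, forcing $h=0$ on $\partial\mathcal S_\nu$, then pins down $k$, and the resulting relation is algebraic -- a quadratic in $k$ -- whose admissible root is exactly \eqref{eq:radialexponent}. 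As consistency checks, $p=2$ yields $k=\nu$ and $f=\cos(\nu\phi)$, while $\nu=1$ yields $k=1$ with $h$ a linear (hence $p$-harmonic) function; equivalently, one may simply take the closed-form $f_{\nu,p}$ from \cite{LS23} and verify $\Delta_p h=0$ directly.

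Finally, I would obtain the qualitative properties from the structure of the ODE together with standard boundary regularity of $p$-harmonic functions: non-degeneracy ($k^2f^2+f'^2>0$) yields the regularity in (1) on the open angular interval; the endpoint condition $f(\pm\tfrac{\pi}{2\nu})=0$ holds by construction, giving (2); and since $h$ is $p$-harmonic and vanishes on the locally flat sides of the wedge, it vanishes there linearly (Hopf-type lemma), so $f'\neq0$ at the endpoints, which, combined with monotonicity of $f$ on each half-interval and continuity up to the closed interval, gives the uniform upper bounds (3) and the uniform lower bounds (4). The main obstacle is the explicit integration of the angular ODE and, above all, showing that the relation between the first zero $\phi^{*}$ and the pair $(k,p)$ collapses to the algebraic identity \eqref{eq:radialexponent} rather than a transcendental one; a secondary point is the non-degeneracy $f'(\pm\tfrac{\pi}{2\nu})\neq0$ needed for (3)--(4). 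Since the lemma is stated as a recalled result, in practice one would simply cite \cite{LS23} for the explicit profile and the exponent and then check (1)--(4) from it.
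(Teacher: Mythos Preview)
Your proposal is correct and matches the paper's approach: the paper's proof is simply to cite \cite[Lemma 3.1]{LS23} for the planar case and observe that the extension to $n>2$ is immediate since $h$ is independent of $z$ and the $p$-Laplace equation does not depend explicitly on $n$. You make exactly this dimensional reduction at the outset and close by noting that, in practice, one would cite \cite{LS23} for the explicit profile and exponent; the intermediate sketch of the angular ODE and its integration is more detail than the paper provides, but it is consistent with the classical derivations the paper attributes to \cite{K73,A86,P89,BV07}.
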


\noindent
\begin{proof}
The planar case is summarized in \cite[Lemma 3.1]{LS23} and the extension to $n>2$ is immediate since $h$ is independent of $z$ and the $p$-Laplace equation does not depend explicitly on $n$. Main ideas date back to \cite{K73,A86,P89,BV07} among others.
\end{proof}

\bigskip

We are now set up to prove our results in the extended sector $\mathcal S_v$.

\begin{theorem}\label{thm:p-harmonic-measure1}
Assume $p \in (1,\infty)$, $v\in (\frac{1}{2}, \infty)$, and $R>0$. Let $\mathcal{S}_\nu \subset \mathbb{R}^n$ be defined by \eqref{def:Snu} and $k:=k(\nu,p)$ be the exponent in~\eqref{eq:radialexponent}. Then the following holds:

\vspace{0.3cm}
\textbf{a)} There exists $c$, depending only on $v$ and $p$, such that
\begin{align}
c^{-1}\,\left(\frac{r}{R}\right)^{k}\,  \leq \omega_p(x)\, \leq c\, \left(\frac{r}{R}\right)^{k}, \label{eq:mainresult} \qquad \mbox{$x \in \mathcal{S}_{2\nu} \cap B(0,\frac{R}{c})$}
\end{align}
where $\omega_R(x)$ is the $p$-harmonic measure of $\partial B(0,R) \cap \mathcal{S}_\nu$ at $x=(r,\phi,z)$ w.r.t. $\mathcal{S}^n_\nu(R):=\mathcal{S}_\nu \cap B(0,R)$.

\vspace{0.3cm}
\textbf{b)}  If $u(x)$ is a $p$-subharmonic function in $\mathcal S_\nu$ satisyfing
\begin{align*}
\limsup_{x \to y} u(x) \leq 0 \quad \text{for each}\; y \in \partial  S_\nu,
\end{align*}
then either $ u \leq 0$ in $ S_\nu$ or
\begin{align*}
\liminf_{R \to \infty} \frac{M(R)}{R^{k}} >0,
\end{align*}
where $k=k(v,p)$ is the exponent in \eqref{eq:radialexponent}, and
\begin{align*}
M(R)=\underset{x\in S_{v}}{\underset{|x|=R}{\sup}} u(x).
\end{align*}

\vspace{0.3cm}
\textbf{c)}
If $u > 0$ is $p$-harmonic in $\mathcal S_\nu$, satisfying $u=0$ on $\partial \mathcal S_\nu$,
then there exist a constant $C$ such that
$$
u(x) = u(r, \phi, z) = C r^{k} f_{\nu,p} (\phi).
$$
\end{theorem}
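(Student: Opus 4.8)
The strategy for all three parts is to play an arbitrary solution (or the $p$-harmonic measure) off against the explicit $p$-harmonic function $h(x)=h(r,\phi,z)=r^{k}f_{\nu,p}(\phi)$ of Lemma \ref{lemma:explicit1}, which is positive in $\mathcal{S}_\nu$, continuous on $\overline{\mathcal{S}_\nu}$, and vanishes precisely on $\partial\mathcal{S}_\nu$ (recall $k>0$ for $\nu>1/2$ and $f_{\nu,p}(\pm\pi/(2\nu))=0$). Two ingredients do the work: the comparison principle of Lemma \ref{jamforelseprin}, combined with the homogeneity of $\Delta_p$ of degree $p-1$, which makes $M\,\omega$ again $p$-(super)harmonic for every constant $M>0$; and the rescaled H\"older-quotient estimate of Lemma \ref{lemma:Hölder_of_ratio_LN}. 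I will also use, throughout, that $f_{\nu,p}$ is bounded above on $[-\tfrac{\pi}{2\nu},\tfrac{\pi}{2\nu}]$ by item (3) of Lemma \ref{lemma:explicit1}, and bounded below by a positive constant $c(\nu,p)^{-1}$ on the narrower closed range $|\phi|\le\tfrac{\pi}{4\nu}$ (the latter by combining item (4) with the continuity and strict positivity of $f_{\nu,p}$ on that compact interval). This two-sided control of $f_{\nu,p}$ on $\overline{\mathcal{S}_{2\nu}}$ is exactly why the sharp bound in a) is stated on $\mathcal{S}_{2\nu}$ and not on all of $\mathcal{S}_\nu$.

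For a), since $\mathcal{S}_\nu$ is Lipschitz it is regular, so the $p$-harmonic measure $\omega_p$ of $\partial B(0,R)\cap\mathcal{S}_\nu$ is $p$-harmonic in $\mathcal{S}_\nu\cap B(0,R)$ with $0<\omega_p<1$, vanishes continuously on $\partial\mathcal{S}_\nu\cap B(0,R)$, and equals $1$ on $\partial B(0,R)\cap\mathcal{S}_\nu$. For the lower bound, let $c_0\ge1$ be the constant in item (3) of Lemma \ref{lemma:explicit1}; on $\partial(\mathcal{S}_\nu\cap B(0,R))$ the function $h/(c_0R^{k})$ vanishes on the lateral part and is $\le1$ on the spherical cap (since $r\le R$ there), so Lemma \ref{jamforelseprin} gives $h/(c_0R^{k})\le\omega_p$ in $\mathcal{S}_\nu\cap B(0,R)$, and restricting to $\mathcal{S}_{2\nu}\cap B(0,R/c)$, where $f_{\nu,p}\gtrsim1$, yields $\omega_p(x)\ge c^{-1}(r/R)^{k}$. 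For the upper bound, apply Lemma \ref{lemma:Hölder_of_ratio_LN} to $\omega_p$ and $h$ and evaluate the resulting comparison of quotients at the axial point $y_0$ with $(r,\phi,z)=(R/(2c),0,0)$, where $h(y_0)=(R/(2c))^{k}f_{\nu,p}(0)\approx R^{k}$ and $\omega_p(y_0)\le1$; this gives $\omega_p(x)\le c\,R^{-k}h(x)\le c(r/R)^{k}$ for every $x\in\mathcal{S}_\nu\cap B(0,R/c)$, in particular on $\mathcal{S}_{2\nu}\cap B(0,R/c)$.

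For b), assume $u\not\le0$, so $u(x_0)=\delta>0$ at some $x_0\in\mathcal{S}_\nu$. First, comparing $u$ with the constant $0$ on $\mathcal{S}_\nu\cap B(0,R)$ via Lemma \ref{jamforelseprin} gives $M(R)>0$ for every $R>|x_0|$ (otherwise $u\le0$ there, contradicting $u(x_0)>0$), so $M(R)\,\omega_p$ is $p$-superharmonic; then on $\partial\mathcal{S}_\nu\cap B(0,R)$ one has $\limsup u\le0=\lim M(R)\omega_p$ and on $\partial B(0,R)\cap\mathcal{S}_\nu$ one has $\limsup u\le M(R)=\lim M(R)\omega_p$, so Lemma \ref{jamforelseprin} yields $u\le M(R)\,\omega_p$; evaluating at $x_0$ (with $R$ also large enough for a) to apply) gives $\delta\le M(R)\,\omega_p(x_0)\le cM(R)(r(x_0)/R)^{k}$, i.e.\ $M(R)/R^{k}\ge c^{-1}\delta\,r(x_0)^{-k}$, whence $\liminf_{R\to\infty}M(R)/R^{k}>0$. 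For c), both $u$ and $h$ are positive $p$-harmonic in $\mathcal{S}_\nu$, continuous up to and vanishing on $\partial\mathcal{S}_\nu$, so Lemma \ref{lemma:Hölder_of_ratio_LN} gives, for every $R>0$ and all $x,y\in\mathcal{S}_\nu\cap B(0,R/c)$, that $|\log(u(x)/h(x))-\log(u(y)/h(y))|\le c\,(|x-y|/R)^{\sigma}$; fixing $x,y$ and letting $R\to\infty$ drives the right-hand side to $0$, so $u/h$ is a constant $C>0$ and $u(r,\phi,z)=C\,r^{k}f_{\nu,p}(\phi)$.

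The main obstacle I anticipate is entirely inside part a): one must be sure that $h$ vanishes exactly on the lateral boundary so that Lemma \ref{jamforelseprin} compares it to $\omega_p$ with no boundary correction terms; one must fix the axial reference point so that $h(y_0)\approx R^{k}$ with comparison constants depending only on $(\nu,p,n)$; and one must upgrade Lemma \ref{lemma:explicit1} to a genuine two-sided bound $f_{\nu,p}\approx1$ on $\overline{\mathcal{S}_{2\nu}}$, since item (4) there controls only the outer angular range and so the continuity and positivity of $f_{\nu,p}$ on $|\phi|\le\pi/(4\nu)$ are essential. Once a) is in place, b) is a short application of the comparison principle and a), and c) is a short application of the scale-invariant H\"older-quotient estimate.
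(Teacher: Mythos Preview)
Your proposal is correct and follows essentially the same strategy as the paper: compare the unknown function with the explicit solution $h(r,\phi,z)=r^kf_{\nu,p}(\phi)$ via Lemma~\ref{lemma:Hölder_of_ratio_LN}, and use the two-sided control of $f_{\nu,p}$ on $\overline{\mathcal{S}_{2\nu}}$. Part c) is identical to the paper's argument, and your part b) simply unpacks the Phragm\'en--Lindel\"of principle that the paper cites from \cite[Theorem~11.11]{HKM}.

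The one noteworthy difference is in part a). The paper obtains \emph{both} bounds from Lemma~\ref{lemma:Hölder_of_ratio_LN} by fixing a reference point $y=(R/c,0,z)$ and arguing that $\omega_p(y)/(R^{-k}h(y))\approx 1$; this requires in particular a lower bound $\omega_p(y)\gtrsim 1$, which the paper justifies only briefly via ``the well known H\"older continuity of $p$-harmonic functions''. You instead get the lower bound by a direct comparison: $h/(c_0R^k)\le\omega_p$ on $\partial(\mathcal{S}_\nu\cap B(0,R))$, hence in the interior by Lemma~\ref{jamforelseprin}. This is slightly more elementary and self-contained, since it avoids having to quantify $\omega_p$ at the reference point. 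Your upper bound then matches the paper's use of Lemma~\ref{lemma:Hölder_of_ratio_LN}, and you correctly note that this upper bound is valid on all of $\mathcal{S}_\nu\cap B(0,R/c)$ (not just $\mathcal{S}_{2\nu}$), which is exactly what is needed when you invoke it at an arbitrary $x_0\in\mathcal{S}_\nu$ in part b).
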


\noindent
\begin{proof}
\textbf{a)} Let $u=\omega_R(x)$ be the $p$-harmonic measure and $v=R^{-k} h$ be a scaled version of the explicit solution \eqref{eq:explicit1}. Fix $y = (R/c, 0, z)$ and apply Lemma \ref{lemma:Hölder_of_ratio_LN} with these functions. After exponentiating we get
$$
c^{-1} \frac{\omega_p (y)}{R^{-k}h(y)} \leq \frac{\omega_p(x)}{r^{k} R^{-k} f_{\nu,p} (\phi_1)} \leq \frac{\omega_p (y)}{R^{-k}h(y)} c,
$$
where $c$ depends only on $p$, $n$, and $M$. By the well known Hölder continuity of $p$-harmonic functions and the construction of $v$ we have that the quotient $\frac{\omega_p (y)}{R^{-k}h(y)} \approx 1$. Since $f_{\nu,p}$ is bounded from above and below in $S_{2\nu}\cap B(0, \frac{R}{c})$, the result  follows.

\vspace{0.3cm}
\textbf{b)} This follows from a direct application of the Phragmen-Lindelöf principle, see \cite[Thm 11.11, p. 206]{HKM}.

\vspace{0.3cm}
\textbf{c)} Let $h$ be the function specified in Lemma \ref{lemma:explicit1} and let $x,y$ be arbitrary points in $\mathcal{S}_v \cap B(w, \frac{R}{c})$. Applying Lemma \ref{lemma:Hölder_of_ratio_LN} we get
$$
\left| \log\frac{u(x)}{h(x)} - \log \frac{ u(y)}{h(y)} \right| \leq c \left | \frac{x-y}{R}\right |^\sigma.
$$
As $x$ and $y$ were arbitrary and $R$ can be chosen arbitrarily large, we get that $\frac{u}{h}$ must be constant and therefore $u = C h$ for some constant $C$. This concludes the proof.
\end{proof}

Before we proceed to the next geometric setting, we remark that the local estimates in \cite[Section 5]{LS23}, e.g. those near outward/invard cusps, for $p\in (1,\infty)$,
will also hold in the extended case considered in Theorem \ref{thm:p-harmonic-measure1}.

%%%%%%%%%%%%%%%%%%%%%%%%%%%%%%%%%%%%%%%%%%%%
%%%%%%%%%%%%%%%%%%%%%%%%%%%%%%%%%%%%%%%%%%%%
%%%%%%%%%%%%%%%%%%%%%%%%%%%%%%%%%%%%%%%%%%%%

\subsubsection{$p$-harmonic functions vanishing on an $m$-flat}
\label{sec:flat}

We finally consider positive $p$-harmonic functions, $p \in (1,\infty]$, vanishing continuously on an $m$-flat, i.e.,
an $m$-dimensional hyperplane $\Lambda$, $1\leq m < n$,
\begin{equation} \label{eq:Lambda}
\Lambda = \{x=(x',x'') \in \mathbb{R}^n : |x'|=0 \}, \qquad  x'' \in \mathbb R^{m} \mbox{ and } x' \in \mathbb R^{n-m}.
\end{equation}
To the authors knowledge, few authors have proved estimates of $p$-harmonic functions vanishing on boundaries having dimension less than $n-1$; $m$-flats were considered in \cite{L85} for the borderline case $p = n$,
and in \cite{L11,L16} similar results where proved for the  general case $p \in (1,\infty]$.
Estimates for solutions of equations of $p$-Laplace type, $p \in (1,\infty)$,
near low-dimensional Reifenberg flat sets, were proved in \cite{LN18}.
Indeed, \cite[Theorem 1.9 and Theorem 1.10]{LN18} imply, through the same simple scaling argument as used above,
an analogue of Lemma \ref{lemma:Hölder_of_ratio_LN} in the current geometric setting.
The above cited results together with the fact that
\begin{equation} \label{eq:explicit2}
v(x) = |x'|^{\beta} %=d(x,\Lambda)^{\frac{p-n+m}{p-1}}
\quad \text{with} \quad \beta = \beta(p) = \frac{p-n+m}{p-1}, \quad \beta(\infty) = 1,
\end{equation}
is $p$-harmonic in $\mathbb R^n \setminus \Lambda$, $p \in (1,\infty]$,
yield the following analogue of Theorem \ref{thm:case3} and Theorem \ref{thm:p-harmonic-measure1}.

\begin{theorem}\label{thm:case2}
Assume $p \in (1, \infty]$, let $\beta$ be as in \eqref{eq:explicit2} and let $\Lambda$ be as in \eqref{eq:Lambda}.

\vspace{0.3cm}
\textbf{a)} Fix $w \in \Lambda$ and let $\omega_R(x)$ be the $p$-harmonic measure of $\partial B(w,R) \setminus \Lambda$ at $x=(x',x'')$ w.r.t. $B(\omega, R)$.
Then there exists $c$, depending only on $m$, $n$, and $p$, such that
$$
c^{-1}\left(\frac{d(x,\Lambda)}{R}\right)^{\beta} \leq \omega_p(x)\leq c  \left(\frac{d(x,\Lambda)}{R}\right)^{\beta} \qquad \mbox{$x \in B(w, \frac{R}{c}) \setminus \Lambda$}.
$$

\vspace{0.3cm}
\textbf{b)} If $u(x)$ is $p$-subharmonic in $\mathbb R^n \setminus \Lambda$ satisyfing
\begin{align*}
\limsup_{x \to y} u(x) \leq 0 \quad \text{for each}\; y \in \Lambda,
\end{align*}
then either $u \leq 0$ in $\mathbb R^n \setminus \Lambda$ or
\begin{align*}
\liminf_{R \to \infty} \frac{M(R)}{R^{\beta}} >0,
\end{align*}
where
\begin{align*}
M(R)=\underset{x\in \mathbb R^n \setminus \Lambda}{\underset{|x|=R}{\sup}} u(x).
\end{align*}

\vspace{0.3cm}
\textbf{c)} If $p<\infty$ and $u > 0$ is $p$-harmonic in $\mathbb R^n \setminus \Lambda$,
satisfying $u(x) = 0$ on $\Lambda$,
then there exists a constant $C$ such that
$$
u(x) = C |x'|^\beta.
$$
%for some constant $c$.
\end{theorem}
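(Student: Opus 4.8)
The plan is to mimic the proofs of Theorems~\ref{thm:case3} and~\ref{thm:p-harmonic-measure1}: combine a scale-invariant boundary Harnack inequality near the $m$-flat $\Lambda$ with the explicit comparison function $v(x)=|x'|^{\beta}$ from~\eqref{eq:explicit2}, and then exploit scaling by letting the radius $R\to\infty$. The first step is to record the quotient estimate we need. For $p\in(1,\infty)$, an $m$-flat is a low-dimensional Reifenberg-flat set with vanishing Reifenberg constant, so \cite[Theorem 1.9 and Theorem 1.10]{LN18} apply; since $\Lambda$ and the $p$-Laplace operator are both invariant under dilations, the rescaling argument used for Lemma~\ref{lemma:Hölder_of_ratio_LN} yields, for positive $p$-harmonic $u_1,u_2$ in $B(w,R)\setminus\Lambda$, continuous up to the boundary and vanishing on $\Lambda\cap B(w,R)$,
\[
\left|\log\frac{u_1(x)}{u_2(x)}-\log\frac{u_1(y)}{u_2(y)}\right|\leq c\left(\frac{|x-y|}{R}\right)^{\sigma},\qquad x,y\in B(w,R/c)\setminus\Lambda,
\]
with $c,\sigma$ depending only on $n,m,p$. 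For $p=\infty$ the corresponding boundary Harnack inequality near an $m$-flat is provided by \cite{L11,L16}. I would also record, from~\eqref{eq:explicit2}, that $|x'|^{\beta}$ is $p$-harmonic in $\mathbb{R}^n\setminus\Lambda$ for every $p\in(1,\infty]$.

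For part a), set $\omega_p:=\omega_p(\partial B(w,R)\setminus\Lambda,\cdot,B(w,R)\setminus\Lambda)$ and $v:=R^{-\beta}|x'|^{\beta}$, a scaled copy of the explicit solution. Both are positive $p$-harmonic in $B(w,R)\setminus\Lambda$ and vanish on $\Lambda$. Fixing a reference point $\hat x$ with $d(\hat x,\Lambda)\approx R$ and applying the quotient estimate (for $p=\infty$, its boundary-Harnack form) and exponentiating gives
\[
c^{-1}\,\frac{\omega_p(\hat x)}{v(\hat x)}\leq\frac{\omega_p(x)}{v(x)}\leq c\,\frac{\omega_p(\hat x)}{v(\hat x)},\qquad x\in B(w,R/c)\setminus\Lambda.
\]
By interior Harnack together with the basic properties of $p$-harmonic measure ($0\leq\omega_p\leq 1$ and a lower bound on a fixed interior ball obtained by comparison with a barrier supported away from $\Lambda$), the reference ratio $\omega_p(\hat x)/v(\hat x)\approx 1$ with constants depending only on $n,m,p$; since $v(x)=(d(x,\Lambda)/R)^{\beta}$ this is exactly the claimed two-sided bound.

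Part b) is then immediate: combine a) with the Phragmén--Lindelöf principle \cite[Thm 11.11, p.\ 206]{HKM}, using $(d(x,\Lambda)/R)^{\beta}$ as the auxiliary minorant, exactly as in the analogous parts of Theorems~\ref{thm:case3} and~\ref{thm:p-harmonic-measure1}. For part c), take $p<\infty$, let $u>0$ be $p$-harmonic in $\mathbb{R}^n\setminus\Lambda$ with $u=0$ on $\Lambda$, and set $v(x)=|x'|^{\beta}$; for every $R>0$ the quotient estimate gives, for all $x,y\in B(0,R/c)\setminus\Lambda$,
\[
\left|\log\frac{u(x)}{|x'|^{\beta}}-\log\frac{u(y)}{|y'|^{\beta}}\right|\leq c\left(\frac{|x-y|}{R}\right)^{\sigma},
\]
and fixing $x,y$ while sending $R\to\infty$ forces $u/|x'|^{\beta}$ to be constant, i.e.\ $u(x)=C|x'|^{\beta}$.

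The main obstacle is part a): one must check that the reference-point ratio $\omega_p(\hat x)/v(\hat x)$ is comparable to $1$ with constants depending only on $n,m,p$, which requires a lower barrier for $\omega_p$ inside $B(w,R)\setminus\Lambda$ at distance $\approx R$ from $\Lambda$ together with Harnack's inequality; and, more structurally, one must be confident that the cited low-dimensional boundary Harnack and quotient-regularity results genuinely apply to an exact $m$-flat for all admissible $p$ and survive the dilation with $R$-independent constants -- this is precisely where the invariance of $\Lambda$ and of the $p$-Laplacian under scalings enters. Once that estimate is in place, parts b) and c) are formal.
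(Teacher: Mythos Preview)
Your proposal is correct and follows essentially the same approach as the paper: for $p<\infty$ the paper obtains the analogue of Lemma~\ref{lemma:Hölder_of_ratio_LN} from \cite[Theorems~1.9 and~1.10]{LN18} via the same dilation argument and then mimics the proof of Theorem~\ref{thm:p-harmonic-measure1}, while for a) and b) (including $p=\infty$) it simply cites \cite[Section~4]{L16}. Your ``main obstacle'' about the reference ratio $\omega_p(\hat x)/v(\hat x)\approx 1$ is handled in the paper exactly as in the proof of Theorem~\ref{thm:p-harmonic-measure1}~a), via H\"older continuity of $\omega_p$ up to $\partial B(w,R)$ together with the explicit normalization of $v$.
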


\noindent
\begin{proof}
Statements \textbf{a)} and \textbf{b)} was proved already in \cite[Section 4]{L16}.
When $p < \infty$ we can proceed as in the proof of Theorem \ref{thm:p-harmonic-measure1} to obtain statement \textbf{c} as well as alternative proofs of statements \textbf{a)} and \textbf{b)}.
Indeed, when $p < \infty$ an analogue of Lemma \ref{lemma:Hölder_of_ratio_LN}, for the current geometric setting,
follows from \cite[Theorem 1.9 and Theorem 1.10]{LN18} by mimicking the proof of Lemma \ref{lemma:Hölder_of_ratio_LN}.
\end{proof}

\bigskip

%\begin{remark}
\noindent
We remark that the results in Theorem \ref{thm:case2} for $p < \infty$ hold in a more general setting, for $A$-harmonic functions as defined in \cite{LN18}. %Moreover, in \cite{LN18} the authors provide a scheme for constructing an exlicit solution in this case and it is therefore possible to prove Theorem \ref{thm:case2} also in this more generalized setting.
However, this is done at the cost of introducting rather technical assumptions and we therefore refrain from stating this generalization here.
%\end{remark}

\bigskip

\noindent
{\bf Acknowledgement.}
The work of Jesper Singh was partially supported by the Swedish research council grant 2018-03743.\\

\noindent
{\bf Data availability statement.}
Our manuscript has no associated data.\\

\noindent
{\bf Conflict of interest statement.}
 On behalf of all authors, the corresponding author states that there is no conflict of interest.\\

%%%%%%%%%%%%%%%%%%%%%%%%%%%%%%%%%%%%%%%%%%%%
%%%%%%%%%%%%%%%%%%%%%%%%%%%%%%%%%%%%%%%%%%%%
%%%%%%%%%%%%%%%%%%%%%%%%%%%%%%%%%%%%%%%%%%%%
%%%%%%%%%%%%%%%%%%%%%%%%%%%%%%%%%%%%%%%%%%%%
%%%%%%%%%%%%%%%%%%%%%%%%%%%%%%%%%%%%%%%%%%%%
%%%%%%%%%%%%%%%%%%%%%%%%%%%%%%%%%%%%%%%%%%%%
%%%%%%%%%%%%%%%%%%%%%%%%%%%%%%%%%%%%%%%%%%%%
%%%%%%%%%%%%%%%%%%%%%%%%%%%%%%%%%%%%%%%%%%%%
%%%%%%%%%%%%%%%%%%%%%%%%%%%%%%%%%%%%%%%%%%%%

\end{document}